	\newtheorem{thm}{Theorem}[section]
	\newtheorem{cor}[thm]{Corollary}
	\newtheorem{lem}[thm]{Lemma}
	\newtheorem{prop}[thm]{Proposition}
	\theoremstyle{definition}
	\theoremstyle{remark}
	\newtheorem{rem}[thm]{Remark}
	\numberwithin{equation}{section}
	\newtheorem{ex}[thm]{Example}
	\numberwithin{equation}{section}	
	\newcommand{\C}{\mathbb{C}}
	\newcommand{\J}{\mathbf{J}}
	\newcommand{\F}{\mathcal{F}}
	\newcommand{\Ju}{\mathcal{J}}
	\newcommand{\G}{\mathscr{G}}
	\newcommand{\g}{\widetilde{G}}
	\newcommand{\V}{\mathcal{V}}
	\newcommand{\h}{\mathsf{H}}
	\newcommand{\M}{\mathsf{M}}
	\newcommand{\Z}{\overline{\mathbf{z}}}
	\newcommand{\K}{\mathbf{K}}
	\newcommand{\ko}{\mathcal{K}}
	\newcommand{\Gr}{\mathsf{G}}
	\newcommand{\uo}{\mathcal{U}}
	\newcommand{\U}{\mathbf{U}}
	\newcommand{\N}{\mathcal{N}}
	\newcommand{\s}{\mathcal{S}}
	\newcommand{\du}{\mathbf{d}}
	\newcommand{\ov}{\overline}
	\newcommand{\sm}{\setminus}
	\newcommand{\ep}{\epsilon}
	\newcommand{\no}{\noindent}
	\newcommand{\norm}[1]{\|#1\|}
	\newcommand{\abs}[1]{|#1|}
	\newcommand{\Aut}{{\rm Aut}(\mathbb C^2)}
	\newcommand{\seq}[1]{\{#1_k\}}
	\newcommand{\I}[1]{\textsf{int}(#1)}
	\newcommand{\vo}[1]{\textsf{vol}(#1)}
	\newcommand{\ku}{\mathbf{k}}
	\newcommand{\gr}{\Gr_{\seq{h}}^\pm}
\begin{document}
	\title[Semigroup of families of H\'{e}non maps]{Dynamics of semigroups of H\'{e}non maps}
	\keywords{H\'{e}non maps, Semigroups, Julia and Fatou sets, Fatou-Bieberbach domains}
	\subjclass[2020]{Primary: 37F80, 32H50; Secondary: 37F44}
	\author{Sayani Bera} 
\address{Sayani Bera: School of Mathematical and Computational Sciences, Indian Association for the Cultivation of Science, Kolkata-700032, India}
\email{sayanibera2016@gmail.com, mcssb2@iacs.res.in}
\begin{abstract}
The goal of this article is two fold. Firstly, we explore the dynamics of a semigroup of polynomial automorphisms of $\C^2$, generated by a finite collection of H\'{e}non maps. In particular, we construct the positive and negative dynamical Green's functions $G_\G^\pm$ and the corresponding dynamical Green's currents $\mu_\G^\pm$ for a semigroup $\s$, generated by a collection $\G.$ Using them, we show that the positive (or the negative) Julia set of the semigroup $\s$, i.e., $\Ju_\s^+$ (or $\Ju_\s^-$) is equal to the closure of the union of individual positive (or negative) Julia sets of the maps, in the semigroup $\s$. Furthermore, we prove that $\mu_\G^+$ is supported on the whole of $\Ju_\s^+$ and is also the unique positive closed $(1,1)$-current supported on $\Ju_\s^+$, satisfying a semi-invariance relation that depends on the generating set $\G$.

\smallskip Secondly, we study the dynamics of a non-autonomous sequence of H\'{e}non maps, say $\seq{h}$, contained in the semigroup $\s$. Similarly, as above, here too, we construct the non-autonomous dynamical positive and negative Green's function and the corresponding dynamical Green's currents. Further, we use the properties of Green's function to conclude that the non-autonomous attracting basin of any such sequence $\seq{h}$, sharing a common attracting fixed point, is biholomorphic to $\C^2.$
\end{abstract}
\maketitle
\vspace{-.5cm}
\section{Introduction}\label{s:introduction}
In this article, we study the dynamics of a semigroup of polynomial automorphisms of $\mathbb{C}^2$, generated by finitely many \textit{H\'{e}non maps.}  To explain the setup, let $\G$ be a given finite collection of automorphisms of $\C^2$. We will consider the semigroup $\s$ generated by the elements of $\G$ under the composition operation, which will be denoted as
\begin{align}\label{e:S}
	\s=\langle \G \rangle \text{ where } \G=\{{\h}_i: 1 \le i \le n_0\},
\end{align}  
and $n_0 \ge 1$, a positive integer. Furthermore, we assume the maps $\h_i$ are H\'{e}non maps, i.e., for every $1 \le i \le n_0$, there exists $m_i \ge 1$ such that
\begin{align}\label{e:fghm}
	\h_i=H_1^i \circ H_2^i \circ \cdots \circ H_{m_i}^i
\end{align} 
and $H_j^i(x,y)$ is a map of the form
\begin{align}\label{e:ghm}
	H(x,y)=(y,p(y)-ax),
\end{align}
where $a \neq 0$ and $p$ is a polynomial of degree at least 2, for every $1 \le i \le n_0$ and $1 \le j \le m_i$. 

\medskip 
Recall from \cite{UedaBook}, a map of the above form (\ref{e:ghm}) was termed as generalised H\'{e}non map, and as H\'{e}non if $p(y)=y^2+c$, classically. However, the maps of form (\ref{e:fghm}) are more general than (\ref{e:ghm}) and the methods to study maps of form (\ref{e:ghm}), mostly generlises for the entire class. Hence by the terminology \textit{H\'{e}non map} we will mean maps of the form (\ref{e:fghm}).  
Our interest to study the dynamics of semigroups generated by H\'{e}non maps, is for the following facts
\begin{itemize}[leftmargin=14pt]
\item Firstly, a classical result of Friedland--Milnor \cite{FM} states that these maps are essentially the only class of polynomial automorphisms of $\mathbb{C}^2$, exhibiting interesting (iterative) dynamics and have been studied intensively. For instance, see  \cite{HOv:I}, \cite{BS2}, \cite{Dinh-Sibony} etc. Further, their dynamics are known to be connected to the dynamics of polynomial maps in $\C$ (see \cite{BS3}) and they also extend as (bi)rational maps of $\mathbb{P}^2.$
\smallskip
\item Secondly, the study of dynamics of arbitrary families, i.e., non-iterative families of holomorphic self-maps (endomorphisms) is important from the point of view of complex function theory. In particular, the (non-autonomous) basins of attraction\,---\,see Theorem \ref{t:BC} below for definition\,---\,of a sequence of automorphisms of $\C^k$, $k \ge 2,$ with a common attracting fixed point has lead to the construction of pathological domains in $\C^k$ (see \cite{RRpaper}, \cite{Survey}, \cite{F:short}).

\smallskip
\item Also, it is conjectured\,---\,follows as a consequence of a conjecture, originally due to Bedford (\cite{B:openproblem},\cite {FSpaper})\,---\,that a non-autonomous basin of attraction of sequences of automorphisms of $\C^2$, that vary within an infinite (or  finite) collection sharing a common uniformly attracting fixed point should be biholomorphic to $\C^2$. The same is true for autonomous (or iterative) basins of endomorphisms with an attracting fixed point (see \cite[Theorem 9.1]{RRpaper}).
\end{itemize}
To mention here in Section \ref{s:7}, we answer the above for a finite collection of \emph{H\'{e}non maps} by the methods developed in this article to study the semigroup $\s$. It is stated as
\begin{thm}\label{t:BC}
Let $\s$ be as in (\ref{e:S}), such that the generators, $\h_i$, $1 \le i \le n_0$, are attracting on a neighbourhood of origin, i.e., there exist $r>0$ and $0<\alpha<1$ such that 
\begin{align}\label{e:uniformly attracting}
 \|\h_i(z) \|\le \alpha \|z\|\text{ for every }z \in B(0;r).
\end{align}
Then the non-autonomous basin of attraction at the origin of every sequence $\seq{h} \subset \s$ defined as $\Omega_{\seq{h}}:=\{z \in \C^2: h_k \circ h_{k-1}\circ \cdots \circ h_1(z) \to 0 \text{ as }k\to \infty\}$ is biholomorphic to $\C^2$.
\end{thm}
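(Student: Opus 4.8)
The plan is to realise $\nbasin{h}$ as an increasing union of balls and then to promote it to all of $\C^2$ by running the non-autonomous analogue of the Rosay--Rudin linearisation scheme that underlies \cite[Theorem 9.1]{RRpaper}, the new inputs being the uniformity in \eqref{e:uniformly attracting} and the a priori control on the compositions supplied by the non-autonomous Green's functions $\Gr_{\seq{h}}^{+},\Gr_{\seq{h}}^{-}$ constructed in the earlier sections.

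First I would record the elementary structure. Setting $z=0$ in \eqref{e:uniformly attracting} gives $\h_i(0)=0$ for every $i$, so the origin is a common fixed point of $\s$ and $0\in\nbasin{h}$. Writing $g_k:=h_k\circ\cdots\circ h_1$, which is an automorphism of $\C^2$, the contraction in \eqref{e:uniformly attracting} shows that an orbit which once enters $\zball{r}$ stays there and converges geometrically to $0$; hence $\nbasin{h}=\bigcup_{k\ge1}g_k^{-1}(\zball{r})$, an increasing union of domains each biholomorphic, via $g_k$, to $\zball{r}$. In particular $\nbasin{h}$ is a connected, simply connected, Stein open subset of $\C^2$, and since the forward orbit of every point of $\nbasin{h}$ is bounded, $\nbasin{h}\subseteq\{\Gr_{\seq{h}}^{+}=0\}$. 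Moreover every element of $\s$ is a word of length $\ge1$ in the generators and hence maps $\zball{r}$ into $\zball{\alpha r}$, so the Cauchy estimates bound the Taylor coefficients at $0$ of all the $h_k$ in terms of $r$ alone; thus, although the degrees $\deg h_k$ need not be bounded, the germs of the $h_k$ at $0$ form a bounded family, each $Dg_k(0)$ is invertible (the generalised H\'enon factors have non-zero Jacobian), and $\|Dg_k(0)\|\le\alpha^{k}$ by the Schwarz lemma.

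For the biholomorphism I would proceed as follows. Let $A_k$ be a polynomial (Poincar\'e--Dulac) normalisation of $Dg_k(0)$; since $\|Dg_k(0)\|\to0$, the sets $A_k^{-1}(\zball{r})$ increase to $\C^2$. The aim is to show that the normalised orbit maps $\Psi_k:=A_k^{-1}\circ g_k\colon\nbasin{h}\to\C^2$ converge, uniformly on compact subsets of $\nbasin{h}$, to an injective holomorphic map $\Psi\colon\nbasin{h}\to\C^2$. Given $K\Subset\nbasin{h}$ one has $K\subseteq g_N^{-1}(\zball{r})$ for some $N$, and then $g_k|_K=T_k\circ g_N|_K$ with $T_k:=h_k\circ\cdots\circ h_{N+1}$ mapping $\zball{r}$ into itself with germ at $0$ controlled as above; near the origin the convergence of $\Psi_k$ is the non-autonomous analogue of the Poincar\'e--Dulac linearisation, forced by the uniform hyperbolicity and the bounded germ data, while on the rest of $\nbasin{h}$ it is reduced, via this factorisation, to the growth of $g_k$ and $g_k^{-1}$, which is exactly what the estimates behind $\Gr_{\seq{h}}^{\pm}$ control. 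Surjectivity is then almost free: $g_k(\nbasin{h})$ is the attracting basin of the shifted sequence $\{h_{k+j}\}_{j\ge1}$ and hence contains $\zball{r}$, so $\Psi_k(\nbasin{h})\supseteq A_k^{-1}(\zball{r})\uparrow\C^2$; and since $\|g_k(z)\|\ge r$ for all large $k$ whenever $z\notin\nbasin{h}$, the maps $\Psi_k$ blow up towards $\partial\nbasin{h}$, so $\Psi$ is proper and a Hurwitz/degree argument gives $\Psi(\nbasin{h})=\C^2$.

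I expect the crux to be the convergence of $\{\Psi_k\}$: for a general sequence of contractions the normalised orbit maps need not converge at all\,---\,this is precisely the mechanism that produces the pathological ``short $\C^2$'' domains\,---\,so the heart of the argument is to show that the uniformity in \eqref{e:uniformly attracting}, the resulting boundedness of the germs of the $h_k$ at $0$, and the control on $g_k^{\pm 1}$ away from $0$ afforded by $\Gr_{\seq{h}}^{\pm}$ together defeat the small-divisor obstructions and force $\Psi$ to exist and be injective. The H\'enon hypothesis enters exactly here: it yields the uniform invertibility of $Dg_k(0)$, and it is what makes the Green's functions $\Gr_{\seq{h}}^{\pm}$, and hence the global estimates needed, available in the first place.
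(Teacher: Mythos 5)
Your reduction of the problem to the convergence of the normalised orbit maps $\Psi_k=A_k^{-1}\circ g_k$ is exactly the step that cannot be taken for granted, and your proposal leaves it unproved: you acknowledge that for general contracting sequences these maps need not converge (this is the short $\C^2$ phenomenon), and then assert that the uniform bound \eqref{e:uniformly attracting} together with the Green's functions $\Gr_{\seq{h}}^{\pm}$ ``defeats the small-divisor obstructions''. No argument is offered for this, and none is readily available: the hypothesis gives only an upper contraction bound $\alpha$, with no lower bound on the contraction rates of the words $h_k\in\s$ (a single element of $\s$ can be an arbitrarily long word, so $\norm{Dh_k(0)}$ can be arbitrarily small), whereas all known non-autonomous linearisation/normal-form results require two-sided bounds with a resonance condition of the type $\beta^2<\alpha$. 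Moreover the functions $\Gr_{\seq{h}}^{\pm}$ encode escape rates at infinity; they give no control whatsoever on the local conjugacy problem at the fixed point, so they cannot substitute for the missing Poincar\'e--Dulac estimates. In effect your ``crux'' is a restatement of the Bedford conjecture in this setting, not a proof of it. The surjectivity and properness claims at the end inherit the same problem, since they presuppose the (unproved) locally uniform convergence of $\Psi_k$.

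The paper proves the theorem by an entirely different mechanism that avoids linearisation. The key observation (Lemma \ref{l:compacts in basin}) is that every compact $K\subset\nbasin{h}$ is eventually contained in the \emph{autonomous} basin $\Omega_{h(k)}$ of the single automorphism $h(k)=h_k\circ\cdots\circ h_1$, each of which is a Fatou--Bieberbach domain by Rosay--Rudin. One then runs Wold's increasing-union scheme: using Anders\'en--Lempert approximation and Wold's gluing lemma, biholomorphisms $\phi_k:\Omega_{h(n_k)}\to\C^2$ are built inductively so that $\phi_k$ and $\phi_k^{-1}$ form Cauchy sequences on an exhaustion of $\nbasin{h}$ and on balls $B(0;k)$, producing an injective limit $\phi:\nbasin{h}\to\C^2$ and a limit $\tilde\phi$ of the inverses. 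Surjectivity is where the non-autonomous Green's theory genuinely enters, but in a different way than you propose: the uniform convergence $G^{+}_{h(k)}\to\Gr^{+}_{\seq{h}}$ (Theorem \ref{t: NA sequential convergence}) shows $\tilde\phi(B(0;N))\subset\I{\K^{+}_{\seq{h}}}$, and Lemma \ref{l:boundary} ($\partial\nbasin{h}\subset\partial\K^{+}_{\seq{h}}$) then forces $\tilde\phi(\C^2)\subset\nbasin{h}$, whence $\phi(\nbasin{h})=\C^2$. If you want to salvage your outline, the missing idea to import is precisely this passage through the autonomous basins $\Omega_{h(k)}$ and Anders\'en--Lempert theory, rather than any attempt to conjugate the sequence to a normal form at the origin.
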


Further, we study a few particular cases of an infinite collection or parametrised families of H\'{e}non maps sharing a common attracting fixed point with `uniform bounds'. In particular, the following example is obtained as a consequence of Corollary \ref{c:BC} (also see Example \ref{e:skew}), in comparison to Theorem 1.4 and 1.10 in \cite{F:short}.
\begin{ex}
Let $H_k(x,y)=(a_k y, a_k x+y^2)$ where $p$ is a polynomial of degree at least $2$ and $c<|a_k|<d$ for every $k \ge 1$, with $0<c<d<1$. Then the 	basin of attraction of the sequence $\seq{H}$, i.e., $\Omega_{\seq{H}}$ (as defined in Theorem \ref{t:BC}) is biholomorphic to $\C^2$. 
\end{ex}

\begin{itemize}[leftmargin=14pt]
\item Lastly, the study of dynamics of rational semigroups on $\mathbb{P}^1$ is an interesting and widely studied area. This setup was introduced by Hinkkanen--Martin, in \cite{HM}, motivated by their connection to the dynamics of Kleinian groups on the Riemann sphere, observed in \cite{GM:Kleiniangroup}.
\end{itemize}

Our primary goal in this article is, to explore the dynamics of a semigroup of H\'{e}non maps both in $\mathbb{P}^2$ and $\C^2$, motivated by the study of dynamics of rational semigroups in $\mathbb{P}^1.$ In particular, we will attempt to connect between results from iterative dynamics of H\'{e}non maps of the form (\ref{e:fghm}) and semigroup dynamics of rational maps in $\mathbb{P}^1$ to the current setup. Later, we will generalise a few results appropriately in the setup of non-autonomous families to obtain the aforementioned applications.

\smallskip Let $X$ be a complex manifold and $\s$ be an \textit{infinite} family of holomorphic self-maps of $X.$ The \textit{Fatou set} for the family $\s$ is the largest open set of $X$ where the family $\s$ is normal, i.e.,
\[\mathcal{F}_{\s}=\{z \in X: \text{there exists a neighbourhood of } z  \text{ where the family } \s \text{ is normal}\}.\]
The \textit{Julia set} $\mathcal{J}_{\s}$, is the complement of the Fatou set in $X.$

\smallskip As reported earlier, the setup considering $X=\mathbb{P}^1$ and $\s$, a semigroup generated by more than one rational map of degree at least 2, was introduced in \cite{HM} and later on has been explored extensively. A major difficulty in this framework\,---\,as compared to the iterative dynamics\,---\,is neither the Julia set nor the Fatou set is completely invariant, in general.

\smallskip
It is a classical result of Brolin \cite{Brolin} that says - if $\s$ is the semigroup of iterates of a (single) polynomial map $p$ of degree at least 2,  the limiting distribution of points in the preimages of a generic point $z \in \mathbb{P}^1$, corresponds to the equilibrium measure of the Julia set. Further, the potential associated with this measure, i.e., the Green's function of the Julia set can be constructed via the dynamics of $p$. The equidistribution of the iterated preimages of a generic point $z \in \mathbb{P}^1$, i.e., the limiting distribution is independent of the (generic) $z$, was established for the iterations of rational map of $\mathbb{P}^1$ by Lyubich in \cite{Lyubich}. Boyd in \cite{Boyd}, extended Lyubich's method and constructed an equidistributed measure supported on the Julia set of a finitely generated semigroup of rational maps (of degree at least 2) in $\mathbb{P}^1$. 
For a finitely generated semigroup of polynomials of degree at least 2, Boyd's measure is not, in general, the equilibrium measure of its Julia set. Recently, in \cite{L:semigroups} the latter measure is interpreted as an equilibrium measure in the presence of an external field, which is given by a generalisation of the Greens function\,---\,attributed as `dynamical Greens function'.

\smallskip To note, equidistributed measures exist for dynamics of certain meromorphic correspondences on compact connected K\"{a}hler manifolds, of appropriate intermediate degree (see \cite{DS:correspondences}). However,  birational maps of $\mathbb{P}^2$ obtained from extension of H\'{e}non maps do not belong to the above category. Also, for iterative families of a H\'{e}non map the Julia set is captured via the support of a unique positive closed $(1,1)$-current of mass 1, obtained by the action of $dd^c$-operator on the pluri-complex Green's function of the Julia set.  Furthermore, it is an equidistributed current in $\C^2$, in the sense, that it can be recovered as a limit of appropriately weighted preimages of an algebraic variety in $\C^2$\,---\,see \cite[Theorem 4.7]{BS2} or \cite[Corollary 6.7]{Dinh-Sibony}.
 
 \smallskip  To mention here, construction of currents for non-autonomous families of H\'{e}non maps have been done on an appropriate bounded region containing the origin, in \cite{DS:horizontal}, via the fact they are {\it horizontal}. Here, we construct a (similar) global equidistributed dynamical Green's current on $\mathbb{P}^2$ in Corollaries \ref{t:result 4} and  \ref{t:NA currents}\,---\,using the dynamical Green's functions, constructed by generalising ideas from \cite{Boyd}, \cite{DS:horizontal}, \cite{L:semigroups}\,---\,both for the semigroups $\s$ and non-autonomous families. Thus, obtaining the uniqueness of the currents upto a semi-invariance property for the semigroups $\s$, stated in Corollary \ref{c:result 5}. Also, see Remark \ref{r:analogy}, for details.
 
\smallskip Let us first recall a few important properties of iterations of a H\'{e}non map $\h$. The pluri-complex Green's function (see \cite{KlimekBook} for the definition) associated to the Julia set of iterates of $\h$ or $\h^{-1}$, say $G_{\h}^+$ or $G_\h^-$ respectively, can be recovered via the dynamics. In particular, if $d_\h$ is the degree of the map $\h$ then
\[ G_\h^\pm(z)=\lim_{k \to \infty} \frac{\log^+ \|\h^{\pm k}(z)\|}{d_\h^k} \text{ and } \mu_\h^\pm(z)=\frac{1}{2\pi}dd^c (G_\h^\pm),\]
where $\log^+ x=\max\{\log x, 0\}$ for every $x>0$ and $\norm{\cdot}$ be the supremum norm in $\C^2$. Also, $\mu_\h^\pm$ are the closed positive $(1,1)$-(equidistributed) currents as mentioned previously.  Note that the above definition holds for any norm on $\C^2$, however for the sake of convenience we will use the notation $\norm{\cdot}$ to denote the supremum norm, throughout this article. 

\smallskip Now, let $\s$ be the semigroup as introduced in (\ref{e:S}), i.e., $\s=\langle\G\rangle$ where $\G=\{\h_i:1 \le i \le n_0\}$ and $\h_i$ are H\'{e}non maps of the form (\ref{e:fghm}) and of degree $d_i \ge 2$ for every $1 \le i \le n.$ We first generalise a few definitions and observe some basic results regarding the semigroup $\s$ in Section \ref{s:2}. Particularly, we note that $\s$ might have more than one generating set, however it has a unique minimal generating set. 

\smallskip In Section \ref{s:3}, we generalise the construction of positive and negative Green's functions, i.e., the functions $G_\h^\pm$ noted above, in the setup of the  semigroup $\s.$ To do the same, we define the total degree of the semigroup $\s$ with respect to the generating set $\G$ as $D_{\G}=\sum_{i=1}^{n_0} d_i$, and consider the sequence of plurisubharmonic functions $G_k^\pm$ on $\mathbb{C}^2$ defined as
\begin{align}\label{e:Green sequence}
G_k^+(z)=\frac{1}{D_{\G}^k} \sum_{h \in \G_k} \log^+\|h(z)\| \text{ and }G_k^-(z)=\frac{1}{D_{\G}^k} \sum_{h \in \G_k} \log^+\|h^{-1}(z)\|,
\end{align}
where $\G_k$ denote the elements of the semigroup $\s$ of length $k$ with respect to the generating set $\G$, i.e.,
$ \G_k=\{\h_{i_1} \circ \cdots\circ \h_{i_k}: 1 \le i_j \le n_0 \text{ and } 1 \le j \le k\}.$  We prove that the pointwise limits of the sequences $\{G_k^\pm\}$ constructed in (\ref{e:Green sequence}) exist, which is stated as
\begin{thm}\label{t:result 1}
The sequences $\{G_k^\pm\}$ converge pointwise to plurisubharmonic, continuous functions $G_{\G}^\pm$ on $\mathbb{C}^2$, respectively.
\end{thm}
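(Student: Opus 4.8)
The plan is to prove that the convergence in \eqref{e:Green sequence} is locally uniform on $\C^2$, which at once gives that the limits are continuous and plurisubharmonic. The first step is to fix, uniformly over the $n_0$ generators, the standard filtration of $\C^2$ associated with H\'enon maps (see \cite{FM, BS2}): for $R>0$ large set $V^+=\{\abs y\ge R,\ \abs y\ge\abs x\}$, $V^-=\{\abs x\ge R,\ \abs x\ge\abs y\}$, $V=\{\abs x\le R,\ \abs y\le R\}$, so that $\C^2=V^+\cup V\cup V^-$, and record for each generator $\h_i$ (a H\'enon map of degree $d_i\ge2$): $\h_i(V^+)\subset V^+$ with $\log\norm{\h_i(z)}=d_i\log\norm z+O(1)$ on $V^+$; symmetrically $\h_i^{-1}(V^-)\subset V^-$ with $\log\norm{\h_i^{-1}(z)}=d_i\log\norm z+O(1)$ on $V^-$; hence $\C^2\sm V^-$, and so $V\cup V^+$, is forward invariant under every $\h_i$; and, since $\h_i^{-1}$ expands on $V^-$, one has $\norm{\h_i(\zeta)}\le\norm\zeta$ whenever $\zeta,\h_i(\zeta)\in V^-$. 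Because there are finitely many generators, all the $O(1)$'s are uniform in $i$ and in the point; write $C_0$ for a common bound.

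The arithmetic point is the identity $\sum_{h\in\G_k}\deg h=\big(\sum_{i=1}^{n_0}d_i\big)^k=D_\G^k$, which is exactly the normalisation in \eqref{e:Green sequence}. Combined with $\G_{k+1}=\{\h_i\circ g:1\le i\le n_0,\ g\in\G_k\}$ and the estimate $\log^+\norm{\h_i(g(z))}=d_i\log\norm{g(z)}+O(1)$, valid for $z\in V^+$ because then $g(z)\in V^+$, this yields the telescoping bound $\abs{G_{k+1}^+(z)-G_k^+(z)}\le C_0\,(n_0/D_\G)^{k+1}$ on $V^+$; as $n_0<D_\G$ the series is summable, so $\{G_k^+\}$ converges uniformly on $V^+$ to a continuous plurisubharmonic $G_\G^+$ with $\abs{G_\G^+(z)-\log\norm z}\le C_0$ there, and in fact $\abs{G_k^+(z)-\log\norm z}\le C_0$ for every $k$ on $V^+$ (so $G_\G^+>0$ on $V^+$ once $\log R>C_0$). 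The sequence $\{G_k^-\}$ is treated identically, with $V^+$ and $V^-$ interchanged and each $\h_i$ replaced by $\h_i^{-1}$.

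To propagate the convergence to all of $\C^2$, I would, for a fixed $z\notin V^+$ and each word $w=(i_1,\dots,i_k)$, let $\tau_w(z)$ be the first time the orbit $z_0=z$, $z_j=\h_{i_{k-j+1}}(z_{j-1})$ enters $V^+$. If $\tau_w(z)=t<\infty$, the point $z_t=\h_{i_{k-t+1}}\circ\cdots\circ\h_{i_k}(z)\in V^+$ depends only on the last $t$ letters, and the filtration facts force $\norm{z_t}\le\rho_0(z)$ with $\rho_0(z)$ depending only on $z$ (the orbit before time $t$ lies in $V\cup V^-$, where norms do not increase); the remaining $k-t$ letters being free, the contribution of such words to $G_k^+(z)$ is exactly $D_\G^{-t}\sum_{\text{admissible tails}}G_{k-t}^+(z_t)$. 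If $\tau_w(z)=\infty$ the whole orbit stays in $V\cup V^-$, so $\log^+\norm{h_w(z)}\le\max(\log^+\norm z,\log^+R)$ and these words contribute $O\big((n_0/D_\G)^k\big)$. Now $z_t$ ranges in a fixed compact subset of $V^+$ on which $G_{k-t}^+\to G_\G^+$ uniformly, so each fixed-$t$ term converges as $k\to\infty$; and since $0\le G_m^+\le\log\rho_0(z)+C_0$ on that set, the contribution of all $t\ge T$ is at most $C_z\sum_{t\ge T}(n_0/D_\G)^t$ uniformly in $k$. An interchange-of-limits argument then shows $G_k^+(z)$ converges, and, every estimate being locally uniform in $z$, the convergence is locally uniform on $\C^2$; a locally uniform limit of continuous plurisubharmonic functions is continuous and plurisubharmonic, proving the theorem. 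Again $\{G_k^-\}$ is handled the same way.

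I expect this last step to be the main obstacle. Unlike for a single H\'enon map, where the functional equation $G_\h^+\circ\h=d_\h\,G_\h^+$ together with the dichotomy ``bounded forward orbit versus eventual entry into $V^+$'' determines the behaviour off $V^+$, here a point need not eventually enter $V^+$ under every word, so one must instead control the weighted average over words term by term: the escape-time decomposition has to be organised so that the pre-escape segment of every orbit is uniformly bounded and the post-escape segment is faithfully identified with $G_{k-t}^+$ at a point confined to a fixed compact subset of $V^+$, allowing the $V^+$-convergence already proved to be reinvested with uniform-in-$k$ control of the tail in $t$. The one piece of the filtration that is truly indispensable here is that norms do not increase on the not-yet-escaping part of $V^-$ (equivalently, each $\h_i^{-1}$ expands $V^-$); without it the words with $\tau_w(z)=\infty$ could contribute an unbounded amount when $z$ is deep in $V^-$.
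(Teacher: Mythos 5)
Your proposal is correct, and while its first half coincides with the paper's argument, the way you globalise it is genuinely different. The opening step --- the telescoping bound $|G^+_{k+1}-G^+_k|\le M_0(n_0/D_\G)^{k+1}$ on $V_R^+$ obtained from the uniform filtration estimates for the finitely many generators, together with $|G^+_k-\log\norm{z}|\le C_0$ there --- is exactly \textit{Step 1} of the paper's proof and its Corollary \ref{c:Green constant}. From there the paper proceeds region by region: compacts of $\U_\s^+$ (Lemma \ref{l:compact sets on U+}), then the splitting $G_k^+=G_k^b+G_k^u$ according to whether a word has escaped by time $k$, with cardinality estimates for $\G^b_k(z),\G^u_k(z)$, a pointwise Cauchy estimate on $\K_\s^+\setminus\ko_\s^+$, uniformity on compacts of $\uo_\s^+$, and finally a separate contradiction argument for continuity of the limit across $\partial\ko_\s^+$; locally uniform convergence on all of $\C^2$ is only obtained afterwards (Lemma \ref{l:uniform convergence}), using that continuity. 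You instead refine the bounded/unbounded splitting into a first-escape-time decomposition over words: the suffix of length $t$ that first carries $z$ into $V_R^+$ lands in a compact subset of $V_R^+$ of radius $\rho_0(z)$ (because the pre-escape orbit stays in $V_R\cup V_R^-$ with norm at most $\max(\norm z,R)$, by backward expansion on $V_R^-$ --- the same mechanism as Lemmas \ref{l:filtration} and \ref{l:step 4}), the free prefix reproduces $G^+_{k-t}$ exactly with weight $D_\G^{-t}$, never-escaping words contribute $O((n_0/D_\G)^k)$, and the tail in $t$ is dominated by $\sum_{t>T}(n_0/D_\G)^t$; this yields a locally uniform Cauchy estimate on all of $\C^2$ in one stroke, so continuity and plurisubharmonicity of $G_\G^\pm$ are immediate and you recover Lemma \ref{l:uniform convergence} for free. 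What the paper's organisation buys instead is the auxiliary apparatus ($\ko_\s^\pm$, $\K_\s^\pm$, the sets $\G^b_k(z),\G^u_k(z)$ and the constants $B,\widetilde M$) that is reused in Sections \ref{s:4}--\ref{s:6}. Two minor points of hygiene, neither a gap: the phrase ``norms do not increase on $V\cup V^-$'' is literally false on the polydisk $V_R$ (and the escape step itself can raise the norm up to $\rho_0(z)$) --- what you need, and what your expansion fact does give, is the bound $\max(\norm z,R)$ along the pre-escape orbit; and the fact that a point of $V_R$ cannot be sent into $V_R^-\setminus V_R$, which your orbit analysis tacitly uses, should be quoted explicitly as a consequence of $\h_i^{-1}(V_R^-)\subset V_R^-$.
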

Henceforth, the functions $G_{\G}^\pm$ will be referred as the \textit{dynamical positive (or negative) Green's function} associated to the semigroup $\s$ generated by the set $\G=\{{\h}_i: 1 \le i \le n_0\}$. The need to specify the generating set $\G$ is important as the semigroup $\s$ may admit multiple generating sets. Also, note that the functions $G_\G^\pm$ satisfy the following semi-invariance relation right by the construction (\ref{e:Green sequence}) and Theorem \ref{t:result 1}. 
\begin{cor}\label{c:result 2}
$ \sum_{i=1}^{n_0}G_{\G}^+ \circ \h_i(z)=D_{\G}. G_{\G}^+(z)$ and $ \sum_{i=1}^{n_0}G_{\G}^- \circ \h_i^{-1}(z)=D_{\G} .G_{\G}^-(z).$
\end{cor}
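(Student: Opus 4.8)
The plan is to establish each identity first at the level of the finite approximants $\{G_k^\pm\}$ from (\ref{e:Green sequence}) and then to let $k\to\infty$, using the pointwise convergence $G_k^\pm\to G_\G^\pm$ supplied by Theorem \ref{t:result 1}. The one combinatorial input I will need is a reindexing: regarding the elements of $\G_k$ as indexed by the words $(i_1,\dots,i_k)\in\{1,\dots,n_0\}^k$ — the convention under which the normalising constant $D_\G^k$ in (\ref{e:Green sequence}) is precisely $\sum_w d_w$, the sum over length-$k$ words $w$ of the degrees $d_w$ of the corresponding compositions — the map $(h,i)\mapsto h\circ\h_i$ is a bijection from $\G_k\times\{1,\dots,n_0\}$ onto $\G_{k+1}$ (peel off the last letter), and likewise $(h,i)\mapsto\h_i\circ h$ is a bijection from $\G_k\times\{1,\dots,n_0\}$ onto $\G_{k+1}$ (peel off the first letter).

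For the positive function I would fix $z\in\C^2$, write $h(\h_i(z))=(h\circ\h_i)(z)$, and reindex:
\begin{align*}
\sum_{i=1}^{n_0}G_k^+(\h_i(z))
&=\frac{1}{D_\G^k}\sum_{i=1}^{n_0}\sum_{h\in\G_k}\log^+\norm{(h\circ\h_i)(z)}\\
&=\frac{1}{D_\G^k}\sum_{g\in\G_{k+1}}\log^+\norm{g(z)}
=D_\G\,G_{k+1}^+(z),
\end{align*}
the middle equality being the first bijection above. Since $n_0$ is finite and $G_k^+\to G_\G^+$ pointwise on $\C^2$ by Theorem \ref{t:result 1} (applied at each of the finitely many points $\h_i(z)$, and at $z$), letting $k\to\infty$ yields $\sum_{i=1}^{n_0}G_\G^+\circ\h_i(z)=D_\G\,G_\G^+(z)$. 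The negative identity is the mirror image: here $h^{-1}(\h_i^{-1}(z))=(\h_i\circ h)^{-1}(z)$, so the same computation — now using the \emph{second} bijection — gives $\sum_{i=1}^{n_0}G_k^-(\h_i^{-1}(z))=D_\G\,G_{k+1}^-(z)$, and passing to the limit gives $\sum_{i=1}^{n_0}G_\G^-\circ\h_i^{-1}(z)=D_\G\,G_\G^-(z)$.

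I do not expect a genuine obstacle here; as the excerpt already notes, the statement is essentially immediate from the construction. The only points to be careful about are (i) reading $\G_k$ with word-multiplicity, so that the reindexing is a true bijection and the normalising factor $D_\G^k$ is the correct one, and (ii) peeling off the \emph{last} letter in the positive case but the \emph{first} letter in the negative case — which is forced by whether $\h_i$ appears inside or outside the composition ($h\circ\h_i$ versus $\h_i\circ h$). These relations are the semigroup analogues of the classical functional equations $G_\h^\pm\circ\h^{\pm1}=d_\h\,G_\h^\pm$ for a single H\'{e}non map, and they are exactly what will later pin down the currents $\mu_\G^\pm=\tfrac{1}{2\pi}dd^c G_\G^\pm$ together with their semi-invariance.
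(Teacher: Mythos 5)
Your proposal is correct and is exactly the argument the paper has in mind: the paper gives no separate proof, asserting the relation follows ``right by the construction (\ref{e:Green sequence}) and Theorem \ref{t:result 1}'', which is precisely your reindexing of $\G_{k+1}$ as $\G_k\times\{1,\dots,n_0\}$ (with word-multiplicity, consistent with the paper's count $\sharp\G_k=n_0^k$) followed by passage to the pointwise limit. Your cautionary remarks about multiplicity and about appending versus prepending the letter $\h_i$ in the positive versus negative case are the right ones, and nothing further is needed.
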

 Thus, as consequences of the proof of Theorem \ref{t:result 1}, we note that the functions $G_\G^\pm$ admit logarithmic growth on appropriate regions, and the \textit{strong filled} positive and negative Julia sets of the semigroup $\s$ are pseudoconcave sets (see Corollary \ref{c:non-zero on uo+} and Remark \ref{r:ko pseudoconcave}). 

\smallskip
Next, we analyse the Julia sets $\Ju_\s^\pm$ and the properties the \textit{dynamical Green's $(1,1)$-currents} associated to the functions $G_\G^\pm$, defined as $\mu_\G^\pm=\frac{1}{2 \pi} dd^c G_\G^\pm$. Consequently, in Section \ref{s:4}, we prove the analogue to Corollary 2.1 from \cite{HM}\,---\,an important fact from the dynamics of semigroups of the rational maps on $\mathbb{P}^1$\,---\,via the supports of $\mu_\G^\pm.$
\begin{thm}\label{t:result 3}
The positive and negative Julia sets corresponding to the dynamics of the semigroup $\s$ is equal to the closure of the union of the (positive and negative) Julia sets of the elements of $\s$ respectively, i.e.,
$\displaystyle\Ju_\s^+=\overline{\bigcup_{h \in \s} J_h^+} \text{ and } \Ju_\s^-=\overline{\bigcup_{h \in \s} J_h^-}.$

\noindent
Further, the positive and the negative dynamical Green's currents $\mu_\G^\pm$ are $(1,1)$-closed positive currents of mass 1 supported (respectively) on the Julia sets, i.e.,
$ \text{Supp }(\mu_\G^\pm) = \Ju_\s^\pm.$
\end{thm}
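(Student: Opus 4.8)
The plan is to prove the statement by establishing the chain of inclusions
\[
\overline{\bigcup_{h\in\s}J_h^+}\ \subseteq\ \mathrm{Supp}\,(\mu_\G^+)\ \subseteq\ \Ju_\s^+\ \subseteq\ \overline{\bigcup_{h\in\s}J_h^+},
\]
together with its mirror image for the ``$-$'' objects, so that all four sets coincide and both assertions drop out together. That each $\mu_\G^\pm=\tfrac1{2\pi}dd^cG_\G^\pm$ is a closed positive $(1,1)$-current is immediate from Theorem \ref{t:result 1}, the $dd^c$ of a plurisubharmonic function being such a current; its mass equals $1$ because the positive currents $\mu_k:=\tfrac1{2\pi D_\G^k}\sum_{h\in\G_k}dd^c\log^+\norm{h(\cdot)}$ extend to $\mathbb{P}^2$ with mass $\tfrac1{D_\G^k}\sum_{h\in\G_k}d_h=1$ (this being precisely the normalisation in (\ref{e:Green sequence})) and $\mu_\G^+=\lim_k\mu_k$ weakly, since $G_k^+\to G_\G^+$ locally uniformly. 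Since inverting every generator turns $\s$ into a semigroup of the same type and exchanges the ``$+$'' and ``$-$'' pictures, I only treat the ``$+$'' case.

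\emph{Easy inclusion, the set $\{G_\G^+=0\}$, and pluriharmonicity on $V_R^+$.} For $h\in\s$ the iterates $\{h^n:n\ge1\}$ are a subfamily of $\s$, so normality of $\s$ at a point forces normality of $\{h^n\}$ there; hence $J_h^+\subseteq\Ju_\s^+$, and $\Ju_\s^+$ being closed, $\overline{\bigcup_h J_h^+}\subseteq\Ju_\s^+$. Iterating Corollary \ref{c:result 2} yields $\sum_{g\in\G_k}G_\G^+\circ g=D_\G^k\,G_\G^+$ for every $k$; as the summands are nonnegative, $G_\G^+(z)=0$ is equivalent to $G_\G^+(g(z))=0$ for all $g\in\s$, while if $z\notin K_h^+$ with $h\in\G_m$ then $h^n(z)$ eventually enters the forward-invariant cone $V_R^+$ of the filtration — which can be fixed uniformly over the finite set $\G$ — where $G_\G^+\ge\log\norm{\cdot}-O(1)>0$, so retaining the $g=h^n$ term in $D_\G^{mn}G_\G^+(z)=\sum_{g\in\G_{mn}}G_\G^+(g(z))$ gives $G_\G^+(z)>0$. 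Thus $\{G_\G^+=0\}=\bigcap_{h\in\s}K_h^+=:\ko_\s^+$ and $\{G_\G^+>0\}=\bigcup_{h\in\s}(\C^2\setminus K_h^+)$. Moreover $G_\G^+$ is pluriharmonic on $V_R^+$: there each $g\in\s$ preserves $V_R^+$ with its second coordinate dominating, so $\log^+\norm{g(z)}=\log|P_g(z)|$ for the zero-free polynomial second component $P_g$ of $g$, hence $G_k^+=\tfrac1{D_\G^k}\log\bigl|\prod_{g\in\G_k}P_g\bigr|$ is pluriharmonic on $V_R^+$, and so is its locally uniform limit $G_\G^+$; in particular $\mathrm{Supp}\,(\mu_\G^+)\cap V_R^+=\emptyset$.

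\emph{Half of the support identity, and backward invariance.} If $\mu_\G^+$ vanished near a point $w_0\in\partial\ko_\s^+$, then $G_\G^+$ would be pluriharmonic, nonnegative and zero there at $w_0\in\ko_\s^+$, hence identically zero near $w_0$ by the minimum principle for pluriharmonic functions, putting $w_0$ in the interior of $\ko_\s^+$ — impossible; so $\partial\ko_\s^+\subseteq\mathrm{Supp}\,(\mu_\G^+)$. Also $\mathrm{Supp}\,(\mu_\G^+)$ is backward invariant under $\s$: taking $dd^c$ of the iterated semi-invariance gives $D_\G^k\mu_\G^+=\sum_{g\in\G_k}g^*\mu_\G^+$, and as each term is positive, $\mu_\G^+=0$ near $z$ forces $g^*\mu_\G^+=0$ near $z$, i.e.\ $g(z)\notin\mathrm{Supp}\,(\mu_\G^+)$, for every $g\in\s$.

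\emph{The remaining two inclusions, and the main difficulty.} It remains to prove (a) $\Ju_\s^+\subseteq\overline{\bigcup_h J_h^+}$, i.e.\ normality of $\s$ on $\C^2\setminus\overline{\bigcup_h J_h^+}$, and (b) $J_h^+\subseteq\mathrm{Supp}\,(\mu_\G^+)$ for each $h\in\s$. For (a), a connected neighbourhood $U$ of such a point meets no $J_h^+=\partial K_h^+$, so for each $h$ the set $K_h^+$ is relatively closed and open in $U$ and $U\subseteq\mathrm{int}\,K_h^+$ or $U\cap K_h^+=\emptyset$; a filtration analysis then shows that the subfamily $\{g\in\s:U\subseteq\mathrm{int}\,K_g^+\}$ has uniformly bounded images on $U$, and that every sequence from the complementary subfamily has, after passing to a subsequence, a locally uniform limit in $\mathbb{P}^2$ (constant, if word lengths stay bounded; compactly divergent otherwise), so $\s$ is normal on $U$. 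For (b), the minimum-principle argument above applies at once to points of $J_h^+$ lying in $\ko_\s^+$; at a point $z_0\in J_h^+$ with $G_\G^+(z_0)>0$ one instead argues by contradiction from a vanishing neighbourhood of $z_0$: by backward invariance $\mu_\G^+$ then vanishes on the whole $\s$-saturation of that neighbourhood, and tracking this saturation with the uniform filtration — using the comparison $\{G_\G^+>0\}\supseteq\{G_h^+>0\}$ and the genuine pluriharmonicity of $G_h^+$ off $K_h^+$ — either produces a point of $\partial\ko_\s^+$ outside $\mathrm{Supp}\,(\mu_\G^+)$ (contradicting the previous paragraph) or, when $\ko_\s^+=\emptyset$, exhausts a set of full $\mu_\G^+$-mass (contradicting mass $1$). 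Controlling $\mathrm{Supp}\,(\mu_\G^+)$ in the escaping region $\{G_\G^+>0\}$, where $G_\G^+$ need not be pluriharmonic, is the step I expect to be the crux. The ``$-$'' chain follows verbatim upon replacing each generator by its inverse.
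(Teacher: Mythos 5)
Your outline has genuine gaps at precisely the points where the real work lies. First, the middle inclusion of your chain, $\mathrm{Supp}\,(\mu_\G^+)\subseteq\Ju_\s^+$, is never addressed: you establish pluriharmonicity of $G_\G^+$ only on $V_R^+$ (and, implicitly, on the interior of $\ko_\s^+$), which says nothing about $dd^cG_\G^+$ on bounded Fatou components inside $\K_\s^+\setminus\ko_\s^+$ or on the intermediate region $\uo_\s^+\setminus\U_\s^+$, where $G_\G^+$ could a priori fail to be pluriharmonic even though $\s$ is normal there. In the paper this is exactly the content of Theorem \ref{t:convergence of mu} and Corollary \ref{c:inclusion-2}: the averaged individual Green's functions $\frac{1}{D_\G^k}\sum_{h\in\G_k}d_h\,G_h^+$ converge locally uniformly to $G_\G^+$, so $\mu_\G^+$ is a weak limit of currents supported on $\bigcup_{h\in\G_k}J_h^+$ and hence $\mathrm{Supp}\,(\mu_\G^+)\subseteq\J_\s^+$; nothing in your proposal substitutes for this convergence, and when you list ``the remaining two inclusions'' you have silently dropped this one, so even granting your (a) and (b) you would only obtain $\Ju_\s^+=\overline{\bigcup_{h}J_h^+}\subseteq\mathrm{Supp}\,(\mu_\G^+)$, not the asserted equality of supports.

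Second, the two steps you do single out are not proved. In (a), the dichotomy for the subfamily with $U\cap K_g^+=\emptyset$ fails as stated: that $U$ lies in the escaping set of each individual $g_n$ controls the $g_n$-orbits of points of $U$, not the single values $g_n(z)$, so for word lengths tending to infinity the images $g_n(U)$ may a priori keep one part in $V_R$ and another deep in $V_R^+$, and compact divergence (hence normality on $U$) does not follow from the filtration alone; excluding exactly this behaviour is the crux, and the paper does it by applying Harnack's inequality to the nonnegative pluriharmonic functions $G_\G^+\circ h_n$ on a ball disjoint from $\mathrm{Supp}\,(\mu_\G^+)$ --- an argument for which you already have every ingredient (semi-invariance, positivity, pluriharmonicity off the support) but which your ``filtration analysis'' does not replace. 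In (b), the case $G_\G^+(z_0)>0$ is only a sketch that you yourself flag as the crux: forward images of the vanishing neighbourhood need not approach $\partial\ko_\s^+$, and ``exhausting full mass'' when $\ko_\s^+=\emptyset$ is asserted, not argued. Note that both (a) and (b) drop out at once from the paper's two inputs --- Corollary \ref{c:inclusion-2} giving $\mathrm{Supp}\,(\mu_\G^+)\subseteq\J_\s^+\subseteq\Ju_\s^+$, and the Harnack argument giving $\Ju_\s^+\subseteq\mathrm{Supp}\,(\mu_\G^+)$ --- so the natural repair is to reorient your chain of inclusions as the paper does rather than to attack (a) and (b) head on.
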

Thus from Theorem \ref{t:result 1} and the above, $G_\G^\pm$ is actually pluriharmonic on the Fatou sets $\mathcal{F}_\s^\pm$. In Section \ref{s:5}, we study the extension of the currents $\mu_\G^\pm$ to $\mathbb{P}^2$ and prove that they are limits of (weighted) equidistributed projective varieties, in the spirit of \cite[Theorem 6.2]{Dinh-Sibony}. Also, consequently we observe the following uniqueness of $\mu_\G^+$ upto a semi-invariance property.
\begin{cor}\label{c:result 5}
The current $\mu_\G^+$ is the unique current of mass 1 supported on $\Ju_\s^+$ and the current $\mu_\G^-$ is the unique current of mass 1 supported on $\Ju_\s^-$ satisfying the following semi-invariance relations (respectively)
\begin{align}\label{e:functorial current}
	\frac{1}{D_\G} \sum_{i=1}^{n_0} \h_i^*(\mu_\G^+)=\mu_\G^+ \text{ and }	\frac{1}{D_\G} \sum_{i=1}^{n_0} {\h_i}_*(\mu_\G^-)=\mu_\G^-.
\end{align}
\end{cor}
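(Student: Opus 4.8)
\emph{Existence.} That $\mu_\G^+$ is admissible is essentially already available: it is a positive closed $(1,1)$-current of mass~$1$ with $\mathrm{Supp}(\mu_\G^+)=\Ju_\s^+$ by Theorem~\ref{t:result 3}, and the semi-invariance comes from applying $dd^c$ to Corollary~\ref{c:result 2}. Since each $\h_i$ is a biholomorphism of $\C^2$, $\h_i^*\,dd^cG_\G^+=dd^c(G_\G^+\circ\h_i)$, whence $\tfrac1{D_\G}\sum_i\h_i^*\mu_\G^+=\tfrac1{2\pi D_\G}\,dd^c\big(\sum_{i}G_\G^+\circ\h_i\big)=\tfrac1{2\pi}\,dd^cG_\G^+=\mu_\G^+$, read first on $\C^2$ and then extended to $\mathbb P^2$ unambiguously (both sides are positive, closed, of mass~$1$). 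The assertion for $\mu_\G^-$ is the same with every $\h_i$ replaced by $\h_i^{-1}$, so I only discuss $\mu_\G^+$ below.

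\emph{Uniqueness.} Let $T$ be a positive closed $(1,1)$-current of mass~$1$ with $\mathrm{Supp}(T)\subseteq\Ju_\s^+$ and $\tfrac1{D_\G}\sum_i\h_i^*T=T$. Put $\Lambda(S):=\tfrac1{D_\G}\sum_{i=1}^{n_0}\h_i^*S$, so that $\Lambda^k(S)=\tfrac1{D_\G^k}\sum_{h\in\G_k}h^*S$ and the hypothesis iterates to $\Lambda^k(T)=T$ for all $k\ge1$. Since $T$ and $\mu_\G^+$ are positive, closed and cohomologous on $\mathbb P^2$, write $T=\mu_\G^++dd^c\gamma$ with $\gamma$ a difference of two quasi-plurisubharmonic functions; using the continuity of $G_\G^+$ (Theorem~\ref{t:result 1}) one may take $\gamma$ pluriharmonic on the Fatou set $\F_\s^+$, since both $T$ and $\mu_\G^+$ vanish there. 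As $\mu_\G^+$ is itself $\Lambda$-fixed, the hypothesis reduces to the statement that $\Lambda^k\gamma-\gamma$ is pluriharmonic on $\C^2$ for every $k$, where $\Lambda\gamma:=\tfrac1{D_\G}\sum_i\gamma\circ\h_i$; it then remains to conclude that $\gamma$ is constant, for then $dd^c\gamma=0$ and $T=\mu_\G^+$. The mechanism is: $\gamma$ is pluriharmonic and bounded above on the connected escaping region $\{G_\G^+>0\}$, hence constant there by a Liouville-type argument near the line at infinity, and the semi-invariance, combined with the control of $\Lambda^k$ on potentials furnished by the equidistribution theorem of Section~\ref{s:5} (Corollary~\ref{t:result 4}), propagates this constancy throughout $\C^2$. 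The negative case is symmetric, with the generators replaced by their inverses.

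\emph{Where the difficulty lies.} The two genuinely delicate points are: (i) the Liouville argument at infinity must accommodate the finitely many indeterminacy points $I_1^+,\dots,I_{n_0}^+$ of the distinct generators in $\mathbb P^2$; and (ii) the relation $\tfrac1{D_\G}\sum_i\h_i^*T=T$ controls only the \emph{average} of the pullbacks $\h_i^*T$, so the single-map contraction $G_\h^+\!\circ\h=d_\h\,G_\h^+$ is not available term by term and must be replaced by the averaged estimates for the sequences $\{G_k^\pm\}$ from Section~\ref{s:3} together with the equidistribution of $\Lambda^k$ from Section~\ref{s:5}. Point (ii) --- equivalently, excluding ``exceptional'' $\Lambda$-fixed currents that are not supported on $\Ju_\s^+$ --- is the principal obstacle, and the support hypothesis is precisely what removes it; this is why the statement is recorded as a corollary of the material of Section~\ref{s:5} rather than of the formal identities in Sections~\ref{s:3}--\ref{s:4}.
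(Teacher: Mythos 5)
Your existence half is fine and agrees with the paper: mass and support come from Theorem \ref{t:result 3}, and the semi-invariance is $dd^c$ of Corollary \ref{c:result 2}. The uniqueness half, however, has a genuine gap. You correctly extract the two decisive ingredients --- the iterated identity $\frac{1}{D_\G^k}\sum_{h\in\G_k}h^*(T)=T$ and the equidistribution statement of Section \ref{s:5} --- but you never combine them in the direct way that actually closes the argument. The paper's proof is exactly this: since $\mathrm{Supp}(T)\subset\Ju_\s^+$ and $\overline{\Ju_\s^+}=\Ju_\s^+\cup I^+$ in $\mathbb{P}^2$ (Proposition \ref{p:projective closures}), the current $T$ extends by zero across $I^+$ to a closed positive $(1,1)$-current of mass $1$ on $\mathbb{P}^2$ whose support misses $I^-$; Corollary \ref{t:result 4} then gives $\frac{1}{D_\G^k}\sum_{h\in\G_k}\bar h^*(T)\to\mu_\G^+$, while the invariance hypothesis says each of these averages equals $T$, so $T=\mu_\G^+$. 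This is where the support hypothesis enters, and no potential-theoretic bookkeeping is needed.

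The route you sketch instead, writing $T=\mu_\G^++dd^c\gamma$ and trying to force $\gamma$ to be constant, is not carried out and its stated mechanism fails at a specific point: you assert $\gamma$ is pluriharmonic and bounded above on $\{G_\G^+>0\}$, but in the semigroup setting $\{G_\G^+>0\}=\uo_\s^+$ is \emph{not} contained in the Fatou set --- it meets $\Ju_\s^+$ whenever $\ko_\s^+\subsetneq\K_\s^+$ (Remark \ref{r:inclusions}, Corollary \ref{c:non-zero on uo+}), which is precisely the new phenomenon compared with a single H\'enon map --- so both $T$ and $\mu_\G^+$ may charge that region and neither pluriharmonicity nor an upper bound for $\gamma$ there is available; the subsequent ``Liouville'' and ``propagation'' steps are only named, not proved. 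Finally, your worry about accommodating ``finitely many indeterminacy points $I_1^+,\dots,I_{n_0}^+$'' is moot: as noted at the start of Section \ref{s:5}, every $h\in\s$ has the \emph{same} indeterminacy point $I^+=[1:0:0]$ (and $I^-=[0:1:0]$ for the inverses), which is what makes the one-line extension-plus-equidistribution argument work uniformly over the whole semigroup.
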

In Section \ref{s:6}, we consider the dynamics of a non-autonomous sequence of H\'{e}non maps, say $\seq{h} \in \s$ and prove that there exist plurisubharmonic and continuous dynamical (positive and negative) Green's functions, denoted by $\Gr_{\seq{h}}^\pm$, with logarithmic growth. Thus
$\mu_{\seq{h}}^\pm=\frac{1}{2\pi}dd^c(\Gr_{\seq{h}}^\pm)$,
are positive $(1,1)$-currents of mass 1, supported on the positive and negative Julia sets of the sequence $\seq{h}.$ Also, we obtain the analogs to the equidistribution results, i.e., Corollaries \ref{t:result 4} and \ref{c:result 4}, in this setup of non-autonomous dynamics. However, we will discuss them  briefly as most of the ideas are similar to that realised in Section \ref{s:5} and, depends upon the existence of the dynamical Green's function with suitable growth at infinity.

\smallskip
Finally, we study the non-autonomous attracting basin of a sequence of H\'{e}non maps of the form (\ref{e:fghm}), admitting a uniformly attracting behaviour, on a neighbourhood of the origin. Further, we prove Theorem \ref{t:BC}, as an application of the existence of Green's functions $\Gr^\pm_{\seq{h}}$ and enlist a few more applications, which follows from the technique. All of these affirmatively answers a few particular cases of the equivalent formulation of the Bedford Conjecture, in $\C^2$ for H\'{e}non maps\,---\,as alluded to in the beginning.
\subsection*{Acknowledgement} The author would like to thank the anonymous referee for carefully reading the manuscript and suggesting helpful comments. 

\section{Some basic definitions and preliminaries}\label{s:2}
In this section, we first observe a proposition about the generating set $\G$ of the semigroup $\s$ as in (\ref{e:S}), which might not be unique, always. Recall the setup from Section \ref{s:introduction}, let $\displaystyle \G=\{\h_i: 1 \le i \le n_0\}$ where $\h_i$'s are H\'{e}non maps of the form (\ref{e:fghm}), with degree $d_i \ge 2$. The \textit{total degree} of the semigroup $\s$ with respect to the generating set $\G$ is  $D_\G=\sum_{i=1}^{n_0}d_i$ and $\G_k$ is the set of all elements of length $k$, $k \ge 1$ in the semigroup $\s$ with respect to $\G$. 
\begin{prop}\label{p:minimal generator}
Let $\s$ be a finitely generated semigroup as in (\ref{e:S}), then there exists a unique minimal set $\G_0$ of maps of the form (\ref{e:fghm}) that generates $\s$, i.e., any set of generators $\G$ of $\s$ is a superset of $\G_0.$
\end{prop}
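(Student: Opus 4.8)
The plan is to identify $\G_0$ with the set of \emph{indecomposable} elements of $\s$, defined intrinsically so that it does not depend on any particular presentation of $\s$: call $g \in \s$ indecomposable if it cannot be written as $g = g_1 \circ g_2$ with $g_1, g_2 \in \s$, and let $\G_0 \subseteq \s$ be the set of all such $g$. The single external fact I would use is that the degree is multiplicative along compositions of H\'enon maps: every element of $\s$, being a composition of maps of the form (\ref{e:ghm}), is itself of the form (\ref{e:fghm}), its degree equals the product of the degrees of the constituent factors, and hence $d(g_1 \circ g_2) = d(g_1)\, d(g_2)$ and $d(g) \ge 2$ for all $g, g_1, g_2 \in \s$. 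In particular $\mathrm{id} \notin \s$, so there are no cancellation phenomena, and the degree equips $\s$ with a well-founded ``size''. This is classical; see \cite{FM}, \cite{UedaBook}.

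With $\G_0$ so defined, the first step is to show $\G_0 \subseteq \G$ for \emph{every} generating set $\G = \{\h_i : 1 \le i \le n_0\}$ of $\s$. Given $g \in \G_0$, write $g = \h_{i_1} \circ \cdots \circ \h_{i_k}$ as a word in $\G$. If $k \ge 2$, then $g = \h_{i_1} \circ (\h_{i_2} \circ \cdots \circ \h_{i_k})$ presents $g$ as a composition of two elements of $\s$, contradicting indecomposability; hence $k = 1$ and $g = \h_{i_1} \in \G$. Applying this to the generating set fixed in (\ref{e:S}) gives $\G_0 \subseteq \{\h_1, \dots, \h_{n_0}\}$, so in particular $\G_0$ is finite and consists of maps of the form (\ref{e:fghm}).

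The second step is that $\G_0$ does generate $\s$, which I would prove by strong induction on $d(g)$. If $g$ is indecomposable, then $g \in \G_0$; otherwise $g = g_1 \circ g_2$ with $g_1, g_2 \in \s$, and multiplicativity together with $d(g_i) \ge 2$ forces $d(g_1), d(g_2) \le d(g)/2 < d(g)$, so the induction hypothesis writes each $g_i$ — and hence $g$ — as a composition of elements of $\G_0$. The induction is well-founded because $d$ takes integer values bounded below by $2$. Combining the two steps, $\G_0$ is a finite generating set contained in every generating set of $\s$; uniqueness of a set with this property is then automatic, since any two such sets must each contain the other. I do not expect a genuine obstacle here: the only point that must be stated with care is the well-definedness and multiplicativity of the degree on $\s$ (so that the induction is legitimate and $\mathrm{id}$ is excluded), and that is standard for H\'enon maps of the form (\ref{e:fghm}).
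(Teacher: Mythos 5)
Your proposal is correct and takes essentially the same route as the paper: the paper's degree-stratified sets $A_n$ are exactly your indecomposable elements of degree $n$, and both arguments rest on multiplicativity of the degree on $\s$, the bound $d\ge 2$, and induction on degree. Your version merely makes explicit the two verifications (that $\G_0$ generates $\s$, and that $\G_0$ lies inside every generating set) which the paper's proof states tersely.
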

\begin{proof}
For $n \ge 1$, let $\s(n)=\{H \in \s: \text{degree of }H \text{ is }n\}.$ Note that  $\s(1)$ is an empty set, as the degree of every element in the generating set $\G$ is at least $2.$ However, $\s(n)$ for every $n \ge 1$, need not necessarily be empty but is always a finite set. We will construct the minimal generating set $\G_0$ inductively, such that it terminates after finitely many steps. Let 
\[
 A_2:=\s(2), A_3:=\s(3) \setminus \big \langle A_2\big \rangle, A_4:=\s(4) \setminus \big \langle A_2 \cup A_3\big \rangle, \hdots, A_n:=\s(n) \setminus \big \langle \bigcup_{i=2}^{n-1}A_i \big\rangle.
\]
Since $\s$ is finitely generated, there exists an $n_0 \ge 1$, such that $A_n=\emptyset$ for $n > n_0$ and $A_{n_0} \neq \emptyset.$ Let $$\G_0=\bigcup_{i=2}^{n_0}A_i.$$
Note that by construction, any element in $\G_0$ is not generated by lower degree maps of form (\ref{e:fghm}). Further as $A_n=\emptyset$ for every $n>n_0$, $\G_0$ is the minimal set generating $\s.$
\end{proof}
\begin{rem}
Thus the \textit{total degree} of a semigroup $\s$ is dependent on the generating set $\G$ and is not unique, in general. Consequently, the sequence of plurisubharmonic functions $\{G_k^\pm\}$ defined in (\ref{e:Green sequence}) and the  \textit{positive} and \textit{negative dynamical Green's function} is also dependent on the generating set $\G$ of the semigroup $\s.$
\end{rem}
Next, we revisit and introduce a few important definitions (and notations) with respect to the dynamics of the semigroup $\s$, that are independent of the generating set $\G$. 
\begin{itemize}[leftmargin=14pt]
\item Let $\s^-$ denote the semigroup of maps comprising of the inverse of the maps that belong to $\s$ and $\G^-$ the inverse of the elements that belong to $\G$, i.e., 
\[ \G^-=\{\h_i^{-1}: 1 \le i \le n_0\} \text { and } \s^-=\langle \G^-\rangle.\]
\item  The Fatou sets of $\s$ and $\s^-$)\,---\,as stated in Section \ref{s:introduction}\,---\,is denoted by $\F_\s^+$ and $\F_\s^-$ respectively. The\textit{ positive and negative Julia sets} are denoted by $\Ju_\s^\pm.$ 

\smallskip
\item We consider the following {\it two} alternatives for the filled positive and negative Julia sets.

\smallskip
\begin{enumerate}
\item The \textbf{strong} positive (or negative) filled Julia set is defined as the collection of all the points $z \in \mathbb{C}^2$ such that for every sequence $\{h_k\} \subset \s$, the sequence $\{h_k(z)\}$ (or the sequence $\{h_k^{-1}(z)\}$, respectively) is bounded, i.e.,

\smallskip
$\displaystyle {\ko}^+_{\s}=\big\{z \in \C^2: \text{for every sequence }\{h_k\}\subset \s \text{ the sequence } \{h_k(z)\} \text{ is bounded}\big\},$

\smallskip 
$\displaystyle {\ko}^-_\s=\big\{z \in \C^2: \text{for every sequence }\{h_k\}\subset \s \text{ the sequence } \{h_k^{-1}(z)\} \text{ is bounded}\big\}.$

\smallskip\item The \textbf{weak} positive filled Julia set is defined as the collection of all the points $z \in \mathbb{C}^2$ such that there exists a sequence $\{h_k\} \subset \s$ with $h_k \in \G_{n_k}$, where $n_k \to \infty$ as $k \to \infty$, and $\{h_k(z)\}$ is bounded. Similarly we define the \textbf{weak} negative filled Julia set, i.e.,

\smallskip
$\displaystyle \K^+_\s=\big\{z \in \C^2: \text{ there exist } h_k \in \G_{n_k} \text{ such that } n_k \to \infty \text{ and }\{h_k(z)\} \text{ is bounded}\big\},$

\smallskip
$\displaystyle \K^-_\s=\big\{z \in \C^2: \text{ there exist } \tilde{h}_k \in \G_{n_k} \text{ such that } n_k \to \infty \text{ and }\{\tilde{h}_k^{-1}(z)\} \text{ is bounded}\big\}.$
\end{enumerate}

\smallskip \no Note that $\ko_\s^\pm \subset \K_\s^\pm$ and these sets are uniquely associated to the semigroup $\s$.

\smallskip
\item Similarly as above we introduce the \textbf{weak} and \textbf{strong} escaping sets $\uo^\pm_\s$ and  $\U_\s^\pm$
\[ \uo_\s^\pm=\C^2 \setminus \ko_\s^\pm \text{ and } \U_\s^\pm=\C^2 \setminus \K_\s^\pm.\]
Note that $\U_\s^+$ is the Fatou component at infinity with respect to the dynamics of the semigroup $\s.$ Similarly $\U_\s^-$ is the Fatou component at infinity for $\s^-$. 

\smallskip

\item Finally, we define the \textit{cumulative positive and negative Julia sets} for the semigroup $\s$, i.e.,
\[ \J_\s^+=\overline{\bigcup_{h \in \s} \Ju_h^+} ,\, \J_\s^-=\overline{\bigcup_{h \in \s} \Ju_h^-}.\] 
\end{itemize}
Observe that, either of the sets $\ko_\s^+$ or $\ko_\s^-$ or both may be empty for some  semigroups $\s$, of form (\ref{e:S}). However, this situation does not affect the dynamics as such.

\smallskip
We will now explore some important properties of the sets introduced above via the filtration properties of the elements of $\s$, on appropriate domains. To discuss this in detail, let us first recall the definition of the sets $V_R$ and $V_R^\pm$ for some $R>0$, introduced in \cite{HOv:I} (or \cite{BS2}) for filtering the dynamics of (finite) compositions of generalised H\'{e}non maps. They are $V_R=\big\{(x,y) \in \C^2: \max \{|x|,|y|\} \le R\big\}$, the polydisk of radius $R$ and
\begin{align*} 
V_R^+=\big\{(x,y) \in \C^2: |y| \ge \max\{|x|,R\}\big\}, \;\;
V_R^-=\big\{(x,y) \in \C^2: |x| \ge \max\{|y|,R\}\big\}.
\end{align*}
Also recall the subsets $\G_k$ of $\s =\langle \G \rangle$, for every $k \ge 1$, defined as
\[\G_k=\{\h_{i_1} \circ \cdots\circ \h_{i_k}: 1 \le i_j \le n_0 \text{ and } 1 \le j \le k\}, \text{ where } \G_1=\G=\{\h_i:1 \le i \le n_0\}.\] 
We first record the dynamical behaviour of the semigroup $\s$ on $V_R^\pm$ for an appropriate $R>0$.
\begin{lem}\label{l:filtration}
There exists $R_\s>0$ such that for every $R>R_\s$, 
\[\overline{h(V_R^+)} \subset V_R^+ \text{ and } \overline{h^{-1}(V_R^-}) \subset V_R^-,
\text{ i.e., }
V_R \cap h(V_R^+)=\emptyset \text{ and }V_R \cap h^{-1}(V_R^-)=\emptyset\] 
whenever $h \in \s.$ Further let $\{h_k\} \subset \s$ such that $h_k \in \G_k$ for every $k \ge 1$, then there exists a sequence positive real numbers $R_k \to \infty$ satisfying
$$V_{R_k} \cap h_k(V_R^+)=\emptyset \text{ and }V_{R_k} \cap h_k^{-1}(V_R^-)=\emptyset.$$
\end{lem}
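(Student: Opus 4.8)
The plan is to reduce everything to the single-map filtration estimates for generalized Hénon maps and then propagate them through compositions. First I would recall the basic fact from \cite{HOv:I} (see also \cite{BS2}): for a single generalized Hénon map $H(x,y)=(y,p(y)-ax)$ of degree $d=\deg p \ge 2$, there is a threshold $R_H>0$ such that for all $R \ge R_H$ one has $\overline{H(V_R^+)} \subset V_R^+$, and moreover on $V_R^+$ the second coordinate grows: if $(x,y) \in V_R^+$ then the $y$-coordinate of $H(x,y)$ has modulus at least $c|y|^d$ for a constant $c=c(H)>0$, hence at least (say) $2|y|$ once $R$ is large enough. For a composition $\h_i = H_1^i \circ \cdots \circ H_{m_i}^i$ one composes these estimates: there is $R_i>0$ so that for $R \ge R_i$, $\overline{\h_i(V_R^+)} \subset V_R^+$ and the $y$-coordinate is multiplied by a factor $\ge 2$ (in fact grows like a power $\ge 2$ of $|y|$). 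Taking $R_\s := \max_{1 \le i \le n_0} R_i$ and using that $V_R^+$ is increasing in $R$, the first assertion $\overline{h(V_R^+)} \subset V_R^+$ for every $R > R_\s$ and every generator follows, and then for every $h \in \s$ by induction on word length, since $\s$-elements are finite compositions of generators. The statement $V_R \cap h(V_R^+)=\emptyset$ is just the observation that $V_R^+ \cap V_R = \emptyset$ by definition (points of $V_R^+$ have $|y| \ge R$, with equality only when $|x| \le R = |y|$, which is a boundary case one absorbs into the strict containment $\overline{h(V_R^+)} \subset V_R^+ \subset \C^2 \setminus V_R$ once one notes the image actually lands in the open set $\{|y| > R\}$). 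The negative statements are identical after replacing $H$ by $H^{-1}$, which is again a generalized Hénon map in the $x$-variable, and $V_R^+$ by $V_R^-$.

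For the second, quantitative assertion, the key point is the uniform factor $\ge 2$ gained by each generator. Fix $R > R_\s$. Given $\{h_k\} \subset \s$ with $h_k \in \G_k$, write $h_k = \h_{i_1} \circ \cdots \circ \h_{i_k}$. Starting from a point $(x,y) \in V_R^+$, so $|y| \ge R$, each successive application of a generator keeps us in $V_R^+$ and multiplies the modulus of the $y$-coordinate by at least $2$; hence the $y$-coordinate of $h_k(x,y)$ has modulus at least $2^k |y| \ge 2^k R$. Therefore $h_k(V_R^+) \subset \{(x',y') : |y'| \ge 2^k R\}$, and setting $R_k := 2^k R$ (or any sequence $R_k \to \infty$ dominated by $2^k R$) gives $V_{R_k} \cap h_k(V_R^+) = \emptyset$, since every point of $V_{R_k}$ has $|y'| \le R_k$. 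The negative statement follows verbatim with $h_k^{-1}$, $V_R^-$, and the growth of the $x$-coordinate.

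The only real obstacle is making the per-generator growth factor genuinely uniform and at least $2$: the single-map estimate typically gives $|y'| \ge c|y|^d$ with $c$ depending on the map, so one must choose $R_\s$ large enough that $c|y|^d \ge c R^{d-1} |y| \ge 2|y|$ for all $|y| \ge R$ and all finitely many generators simultaneously — this is where finiteness of $\G$ is used, and it is routine. One should also take a little care that composing the within-generator maps $H_j^i$ does not require a larger threshold at intermediate stages than the one that works for $\h_i$ as a whole; this is handled by choosing the threshold for $\h_i$ to be the maximum of the thresholds needed to keep each partial composition $H_j^i \circ \cdots \circ H_{m_i}^i$ mapping a suitable $V_{R}^+$ into itself, again a finite max. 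Everything else is bookkeeping.
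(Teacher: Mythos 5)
Your proposal is correct and follows essentially the same route as the paper: both rest on the standard filtration estimates of \cite{HOv:I}, \cite{BS2} for the finitely many generators (giving forward invariance of $V_R^+$ and growth of the $y$-coordinate, uniformly over $\G$), and then iterate that growth along a word of length $k$ to produce a divergent sequence $R_k$. The only cosmetic difference is that you extract a uniform multiplicative factor $2$ per generator, yielding $R_k\sim 2^kR$, whereas the paper keeps the estimate $|\pi_2\circ \h_i|>m|y|^{d_i}$ and defines $R_k=mR_{k-1}^{d_0}$ recursively; either choice suffices since only $R_k\to\infty$ is needed.
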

\begin{proof}
	Recall from \cite{BS2}, for $R>0$ (sufficiently large) there exists $0<m<M$ such that
	\begin{align*}
	m|y|^{d_i} <|\pi_2 \circ \h_i(x,y)|< M|y|^{d_i} \text{ on }V_R^+, \;
	m|x|^{d_i} < |\pi_1 \circ \h_i^{-1}(x,y)|< M|x|^{d_i} \text{ on }V_R^-.
	\end{align*}
Recall from (\ref{e:fghm}) 
\[\h_i=H_1^i \circ H_2^i \circ \cdots \circ H_{m_i}^i,\] where $H_j^i(x,y)=(y,p_j(y)-a_j x)$. Thus degree of $\pi_1 \circ \h_i < \text{ the degree of }\pi_2 \circ \h_i$, for every $1 \le i \le n_0$. Now as $V_R^+$ is a closed subset of $\C^2$ by definition, the above identity further implies that $|\pi_1 \circ \h_i(z)|<|\pi_2 \circ \h_i(z)|$ and $|\pi_2 \circ \h_i(z)|>R$, for $z \in V_R^+$, $R>0$, sufficiently large. In particular, there exists $R>0$, large enough such that
\begin{align}\label{e:VR+}
 \ov{\h_i(V_R^+)}=\h_i(V_R^+) \subset \I{V_R^+},\; \ov{\h_i^{-1}(V_R^-)}=\h_i^{-1}(V_R^-) \subset \I{V_R^-}\text{ for every }1 \le i \le n_0. 
\end{align}
\no Let $d_0=\min\{d_i: 1 \le i \le n_0\} \ge 2$ and $R_\s>1$ be sufficiently large such that $1<R_\s<m R_\s^{d_0}.$ 
Hence from (\ref{e:VR+}), for $R_k=mR^{d_0}_{k-1}$ whenever $k \ge 2$ and $R_1>R_\s$. Thus the proof.
\end{proof}
The constant $R_\s>0$ obtained in Lemma \ref{l:filtration} is actually independent of the generators and will be referred along, as the \textit{radius of filtration} for the semigroup $\s.$ 
\begin{rem}\label{r:filtration estimate}
Note that in the above proof we may assume $0<m<1<M$, such that for every $R>R_\s$ and $1 \le i \le n_0$
\[m|y|^{d_i} <|\pi_2 \circ \h_i(x,y)|< M|y|^{d_i} \text{ on }V_R^+, \text{ and }
	m|x|^{d_i} < |\pi_1 \circ \h_i^{-1}(x,y)|< M|x|^{d_i} \text{ on }V_R^-.\]
\end{rem}
\begin{prop}\label{p:K+ closed}
The sets $\K_\s^\pm$ and $\ko_\s^\pm$ are closed subsets of $\C^2$ and $\ko_\s^\pm \subset \K_\s^\pm \subset V_R \cup V_R^\mp$ (respectively) for $R\ge R_\s.$
\end{prop}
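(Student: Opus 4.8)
The plan is to establish two things for each sign: that the weak filled Julia sets $\K_\s^\pm$ are closed and contained in $V_R \cup V_R^\mp$, and that the corresponding inclusion $\ko_\s^\pm \subset \K_\s^\pm$ (already recorded) together with closedness of $\ko_\s^\pm$ then follows. I will treat the ``$+$'' case; the ``$-$'' case is identical after replacing each $h \in \s$ by its inverse and swapping the roles of $V_R^+$ and $V_R^-$. Throughout, fix $R \ge R_\s$, with $R_\s$ the radius of filtration from Lemma \ref{l:filtration}.

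\emph{Step 1: the filtration trichotomy and the inclusion $\K_\s^+ \subset V_R \cup V_R^-$.} First I recall the standard filtration picture for a single H\'enon map (from \cite{BS2}, \cite{HOv:I}), adapted to the generators: $\C^2 = V_R \cup V_R^+ \cup V_R^-$, and for each generator $\h_i$ one has $\h_i(V_R^+) \subset \I{V_R^+}$ (by (\ref{e:VR+})), $\h_i^{-1}(V_R^-) \subset \I{V_R^-}$, while points of $V_R^-$ are mapped by $\h_i$ into $V_R \cup V_R^-$ and points of $V_R$ are mapped into $V_R \cup V_R^+$; moreover on $V_R^+$ the second coordinate blows up at a definite rate, so orbits entering $V_R^+$ escape to infinity. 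Now take $z \notin V_R \cup V_R^-$, i.e.\ $z \in \I{V_R^+}$. By (\ref{e:VR+}), for \emph{every} generator $\h_i$ we have $\h_i(z) \in \I{V_R^+}$, and inductively $h(z) \in V_R^+$ for every $h \in \s$ with the second coordinate tending to infinity along the length; in fact by the quantitative estimate of Remark \ref{r:filtration estimate}, for any sequence $h_k \in \G_k$ we get $|\pi_2 \circ h_k(z)| \to \infty$. Hence $z$ fails the defining condition of $\K_\s^+$ (no subsequence $h_k \in \G_{n_k}$ with $n_k \to \infty$ can keep $\{h_k(z)\}$ bounded), so $z \notin \K_\s^+$. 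This proves $\K_\s^+ \subset V_R \cup V_R^-$, and since $\ko_\s^+ \subset \K_\s^+$ was already observed, the containment statement is done.

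\emph{Step 2: closedness of $\K_\s^+$.} Suppose $z_j \to z$ with $z_j \in \K_\s^+$; I want $z \in \K_\s^+$. The subtle point is that the witnessing sequences $\{h_k^{(j)}\}$ may vary with $j$, so a naive limit argument does not immediately work. Instead I will argue via the escaping set $\uo_\s^+ = \C^2 \setminus \ko_\s^+$ — wait, more precisely via $\U_\s^+ = \C^2 \setminus \K_\s^+$ — and show it is open. Take $z \in \U_\s^+$. By definition, for \emph{every} choice of $h_k \in \G_{n_k}$ with $n_k \to \infty$, the orbit $\{h_k(z)\}$ is unbounded; I claim this forces $z$ into $\I{V_R^+}$ after finitely many steps along \emph{some} word, and then by the filtration of Step 1 a whole neighbourhood of $z$ escapes. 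Concretely: if $z \notin V_R \cup V_R^+$ then $z \in \I{V_R^-}$ and the (unique) orbit under $\h_i^{-1}$'s would stay trapped in $V_R^-$ — but that concerns $\K_\s^-$, not $\K_\s^+$; for $\K_\s^+$ I should instead observe that the complement $\U_\s^+$ is the Fatou component at infinity for $\s$ (as noted in Section \ref{s:2}), hence open by definition of the Fatou set. That gives closedness of $\K_\s^+$ for free. For $\ko_\s^+$: by Lemma \ref{l:filtration}, for $R > R_\s$ each $h \in \s$ satisfies $h(V_R^+) \subset V_R^+$ with escape, so $\ko_\s^+ = \bigcap_{h \in \s} h^{-1}(V_R \cup V_R^-)$, an intersection of closed sets (each $h$ is a polynomial automorphism, hence $h^{-1}(V_R \cup V_R^-)$ is closed), so $\ko_\s^+$ is closed; and $\ko_\s^+ \subset V_R \cup V_R^-$ is immediate by taking $h = \mathrm{id}$'s worth of short words — more carefully, if $z \in \I{V_R^+}$ then already $\h_1(z) \in \I{V_R^+}$ escapes, contradicting $z \in \ko_\s^+$.

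\emph{The main obstacle.} The genuinely delicate point is Step 2's closedness of $\K_\s^+$: unlike $\ko_\s^+$, the set $\K_\s^+$ is defined by an \emph{existential} quantifier over sequences of increasing length, so it is a priori only an $F_{\sigma}$-type set, not obviously closed. The clean resolution is to identify $\U_\s^+ = \C^2 \setminus \K_\s^+$ with the Fatou component at infinity of $\s$ (equivalently, with the ``strong'' escaping set in the relevant sense), which is open by construction; I will need to check carefully that the filtration in Step 1 actually yields: $z \in \U_\s^+$ iff some word $h \in \s$ pushes $z$ into $\I{V_R^+}$, and that this is an open condition on $z$ because $h$ is continuous and $\I{V_R^+}$ is open. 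Once that equivalence is in hand, openness of $\U_\s^+$ is routine and the Proposition follows. The ``$-$'' statements are obtained verbatim by passing to $\s^-$.
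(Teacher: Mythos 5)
Your Step 1 and your treatment of $\ko_\s^\pm$ are essentially fine (the identity $\ko_\s^+=\bigcap_{h\in\s}h^{-1}(V_R\cup V_R^-)$ does need its nontrivial inclusion\,---\,that an orbit confined to $V_R\cup V_R^-$ cannot be unbounded, which is again Lemma \ref{l:filtration}\,---\,but that idea is already in your Step 1). The genuine gap is the closedness of $\K_\s^\pm$, and both routes you offer for it fail. The claimed equivalence ``$z\in\U_\s^+$ iff some word $h\in\s$ pushes $z$ into $\I{V_R^+}$'' is false: that condition characterises the \emph{weak} escaping set $\uo_\s^+=\C^2\setminus\ko_\s^+$, not $\U_\s^+=\C^2\setminus\K_\s^+$. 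For instance, if $K_{\h_1}^+\neq K_{\h_2}^+$ and $z\in K_{\h_1}^+\setminus K_{\h_2}^+$, then $z\in\K_\s^+$ (the words $\h_1^k$ keep the orbit bounded) even though the words $\h_2^k$ push $z$ into $\I{V_R^+}$; so $\bigcup_{h\in\s}h^{-1}(\I{V_R^+})$ is in general strictly larger than $\U_\s^+$, and proving it open only reproves closedness of $\ko_\s^+$. Your fallback\,---\,``$\U_\s^+$ is the Fatou component at infinity, hence open by definition''\,---\,is circular at this stage: the sentence in Section \ref{s:2} is an assertion, not a definition; that $\U_\s^+$ is open (and sits inside the Fatou set) is part of what Proposition \ref{p:K+ closed} is establishing, and the rigorous identification (Corollary \ref{c:escaping set}) only comes after Theorem \ref{t:result 3}.

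What is missing is the correct quantifier structure: $z\in\U_\s^+$ iff for \emph{some} $k$ \emph{every} word of length $k$ maps $z$ into $\I{V_R^+}$. The paper's proof sets $\U_k=\bigcap_{h\in\G_k}h^{-1}(\I{V_R^+})$, which is open because $\G_k$ is finite, shows $\ov{\U_k}\subset\U_{k+1}$ using (\ref{e:VR+}), and proves $\U_\s^+=\bigcup_k\U_k$ by the dichotomy your proposal never confronts: if for every $k$ some $h_k\in\G_k$ maps $z$ into $V_R\cup V_R^-$, then either infinitely many of these images lie in $V_R$ (producing a bounded sequence of increasing lengths, so $z\in\K_\s^+$), or arbitrarily long words send $z$ into $V_R^-$, which Lemma \ref{l:filtration} forbids since then $z\notin V_{R_{n_k}}$ while $R_{n_k}\to\infty$. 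With this equality $\U_\s^+$ is open, hence $\K_\s^+$ is closed; the same scheme with unions in place of intersections handles $\uo_\s^+$ and $\ko_\s^+$, and $V_R^\pm\subset\U_\s^\pm$ then gives the stated containments. You should redo Step 2 along these lines rather than via the Fatou set.
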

\begin{proof}
Let $U_0=\textsf{int}(V_R^+)$ and let $\{\U_k\}$, $\{\uo_k\}$ be the sequences of open subsets defined as
\[\U_k=\bigcap_{h \in \G_k} h^{-1}(U_0) \text{ and } \uo_k=\bigcup_{h \in \G_k} h^{-1}(U_0).\]
Then 
\[\ov{\U_k}=\bigcap_{h \in \G_k} h^{-1}(\ov{U_0}) \text{ and } \ov{\uo_k}=\bigcup_{h \in \G_k} h^{-1}(\ov{U_0}).\]
Since by (\ref{e:VR+}), $\h_i (z_0) \in \textsf{int}(V_R^+)$ for every $1 \le i \le n_0$ whenever $z_0 \in V_R^+=\ov{U_0}$, we have $\ov{U_0} \subset \h_i^{-1}(U_0)$. Hence $\ov{U_0} \subset \U_1 \subset \uo_1.$ Further for every $h \in \G_k$, $h^{-1}(\ov{U_0}) \subset h^{-1}\circ \h_i^{-1}(U_0)$ for every $\h_i \in \G$, where $k \ge 1$. Thus 
\begin{align}\label{e:U-inclusion}
\ov{\U_k} \subset \U_{k+1} \text{ and } \ov{\uo_k} \subset \uo_{k+1}.	
\end{align}
Let $$\U^+=\bigcup_{k \ge 0} \U_k \text{ and } {\uo}^+=\bigcup_{k \ge 0} \uo_k.$$

Observe that for every $k \ge 1$, $h(\U_k) \subset V_R^+$ whenever $h \in \G_k$. Hence $\U^+ \subset \U_\s^+.$ Now for $z \in \C^2 \setminus \U^+$, note that $h_k(z) \in V_R \cup V_R^-$ for every sequence $\{h_k\} \subset \s.$ Let $z_0 \in \U_\s^+ \cap (\C^2 \setminus \U^+)$. Then there exists a sequence $\{\tilde{h}_k\} \subset \s$ such that $\tilde{h}_k \in \G_{n_k}$ with $n_k \to \infty$ as $k \to \infty$ and $\tilde{h}_k(z_0) \in V_R^-$ for every $k \ge 1$, i.e., $z_0 \in \tilde{h}_k^{-1}(V_R^-)$. Hence by Lemma \ref{l:filtration}, $z_0 \notin \cup_k V_{R_{n_k}}=\C^2$, which is a contradiction! Thus $\U^+=\U_\s^+$. A similar argument works for $\U^-=\U_\s^-$.

\smallskip
Similarly for $z \in {\uo_k^+}$ there exists $h_k \in \G_k$ such that $h_k(z) \in V_R^+$, hence $\uo^+ \subset \uo_\s^+$. Now for $\tilde{z}_0 \in \uo_\s^+\cap (\C^2 \setminus \uo^+)$, as in the above case, there exists a sequence $\{\tilde{h}_k\}$ such that $\tilde{h}_k \in \G_{n_k}$ with $n_k \to \infty$ as $k \to \infty$  and $\tilde{h}_k(\tilde{z}_0) \in V_R^-$ for every $k \ge 1$, i.e., $\tilde{z}_0 \in \tilde{h}_k^{-1}(V_R^-)$. Hence by Lemma \ref{l:filtration}, $\tilde{z}_0 \notin \cup_k V_{R_{n_k}}=\C^2$, which is a contradiction! Thus $\uo^+=\uo_\s^+$. A similar argument works for $\uo^-=\uo_\s^-$.

\smallskip\no Note that the above observations also proves that $V_R^\pm \subset \U_\s^\pm \subset \uo_\s^\pm$, hence $\ko_\s^\pm \subset \K_\s^\pm \subset V_R \cup V_R^\mp$ (respectively).
\end{proof}
Further, from the proof of Proposition \ref{p:K+ closed} we get
\begin{cor}\label{c:U+ and U-}
The escaping sets of $\s$ can be further realised as
\begin{enumerate}
\item $\displaystyle \U_\s^+=\bigcup_{k \ge 1}\bigcap_{h \in \G_k} h^{-1}(V_R^+) \text{ and }\;\U_\s^-=\bigcup_{k \ge 1}\bigcap_{h \in \G_k} h(V_R^-)$;
\item $\displaystyle \uo_\s^+=\bigcup_{k \ge 1}\bigcup_{h \in \G_k} h^{-1}(V_R^+) \text{ and }\;\uo_\s^-=\bigcup_{k \ge 1}\bigcup_{h \in \G_k} h(V_R^-)$.
\end{enumerate}
\end{cor}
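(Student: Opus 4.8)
The plan is to read the positive identities (1)--(2) off the proof of Proposition~\ref{p:K+ closed}, and then to deduce the negative ones by applying them to $\s^-$. Recall from that proof that, with $U_0=\I{V_R^+}$ and, for $k\ge 1$, $\U_k=\bigcap_{h\in\G_k}h^{-1}(U_0)$ and $\uo_k=\bigcup_{h\in\G_k}h^{-1}(U_0)$, one has $\U_\s^+=\bigcup_{k\ge 1}\U_k$ and $\uo_\s^+=\bigcup_{k\ge 1}\uo_k$ (there the union is written over $k\ge 0$, but $\ov{U_0}\subset\U_1$ lets it start at $k=1$). So the positive identities reduce to
\[
\bigcup_{k\ge 1}\bigcap_{h\in\G_k}h^{-1}(U_0)=\bigcup_{k\ge 1}\bigcap_{h\in\G_k}h^{-1}(V_R^+),
\]
together with the same identity with the inner $\bigcap$ replaced by $\bigcup$; in words, enlarging the open filtration set $U_0$ to its closure $V_R^+$ does not change the union over all lengths $k$.

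For this, the inclusion $\subseteq$ is immediate from $U_0\subset V_R^+$. For $\supseteq$, suppose $z\in\bigcap_{h\in\G_k}h^{-1}(V_R^+)$ for some $k$ (resp.\ $z\in\bigcup_{h\in\G_k}h^{-1}(V_R^+)$), i.e.\ $h(z)\in V_R^+$ for all $h\in\G_k$ (resp.\ for some $h\in\G_k$). Every $g\in\G_{k+1}$ factors as $g=\h_i\circ h$ with $\h_i\in\G$ and $h\in\G_k$, so the filtration estimate $\h_i(V_R^+)\subset\I{V_R^+}=U_0$ from~(\ref{e:VR+}) gives $g(z)=\h_i(h(z))\in U_0$ for all $g\in\G_{k+1}$ (resp.\ for at least one $g=\h_i\circ h\in\G_{k+1}$). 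Hence $z\in\bigcap_{g\in\G_{k+1}}g^{-1}(U_0)=\U_{k+1}\subset\U_\s^+$ (resp.\ $z\in\uo_{k+1}\subset\uo_\s^+$). This gives the first halves of (1) and (2).

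For the negative identities I would apply the positive ones to the semigroup $\s^-=\langle\G^-\rangle$, $\G^-=\{\h_i^{-1}:1\le i\le n_0\}$, whose forward filtration region is $V_R^-$ by the second half of~(\ref{e:VR+}), namely $\h_i^{-1}(V_R^-)\subset\I{V_R^-}$. This yields $\U_{\s^-}^+=\bigcup_{k\ge 1}\bigcap_{g\in(\G^-)_k}g^{-1}(V_R^-)$ and the analogue with $\bigcup$ in place of $\bigcap$, where $(\G^-)_k$ denotes the length-$k$ elements of $\s^-$. Since $(\G^-)_k=\{h^{-1}:h\in\G_k\}$, the substitution $h=g^{-1}$ rewrites $g^{-1}(V_R^-)$ as $h(V_R^-)$; and since $\K_\s^-=\K_{\s^-}^+$ and $\ko_\s^-=\ko_{\s^-}^+$ hold straight from the definitions (a forward word in $\G^-$ is nothing but the inverse of a word in $\G$), we have $\U_\s^-=\U_{\s^-}^+$ and $\uo_\s^-=\uo_{\s^-}^+$, which yields the second halves of (1) and (2). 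I do not expect a genuine obstacle here: the corollary merely repackages the proof of Proposition~\ref{p:K+ closed}. The only point that is not entirely formal is the open-versus-closed swap, handled by a single use of~(\ref{e:VR+}); the rest is bookkeeping, namely the identification $(\G^-)_k=\{h^{-1}:h\in\G_k\}$ together with $\U_\s^-=\U_{\s^-}^+$ and $\uo_\s^-=\uo_{\s^-}^+$.
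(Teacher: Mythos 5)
Your proposal is correct and follows essentially the paper's own route: the paper derives this corollary directly from the proof of Proposition \ref{p:K+ closed}, and you simply make explicit the two details it leaves implicit, namely the passage from $\I{V_R^+}$ to $V_R^+$ via one application of (\ref{e:VR+}) and the symmetric treatment of $\U_\s^-$, $\uo_\s^-$ through $\s^-$ (equivalently, the "similar argument" with $h^{-1}$ and $V_R^-$). No gaps.
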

\begin{rem}\label{r:inclusions}
Note that $\K_\s^\pm \setminus \textsf{int}(\ko_\s^\pm)=\overline{\uo_\s^\pm \setminus \U_\s^\pm}$ and the Julia sets $\Ju_\s^\pm \subset \K_\s^\pm \setminus \textsf{int}(\ko_\s^\pm)$, however, they might not be equal. Thus so far we have the straightforward inclusion relation
\[\J_\s^\pm \subset \Ju_\s^\pm \subset \K_\s^\pm \setminus \textsf{int}(\ko_\s^\pm).\]
Also both $\partial \ko_\s^\pm, \partial \K_\s^\pm \subset \Ju_\s^\pm$, i.e., they may be proper subsets $\Ju_\s^\pm$, unlike the iterative dynamics of H\'{e}non maps. So it leads to the question: Is $\Ju_\s^\pm = \K_\s^\pm \setminus \textsf{int}(\ko_\s^\pm)$ or $\Ju_\s^\pm=\partial \ko_\s^\pm \cup \partial \K_\s^\pm$?
\end{rem}
\begin{figure}[H]
 \centering
\begin{minipage}{.5\textwidth}
  \centering
  \includegraphics[scale=.9]{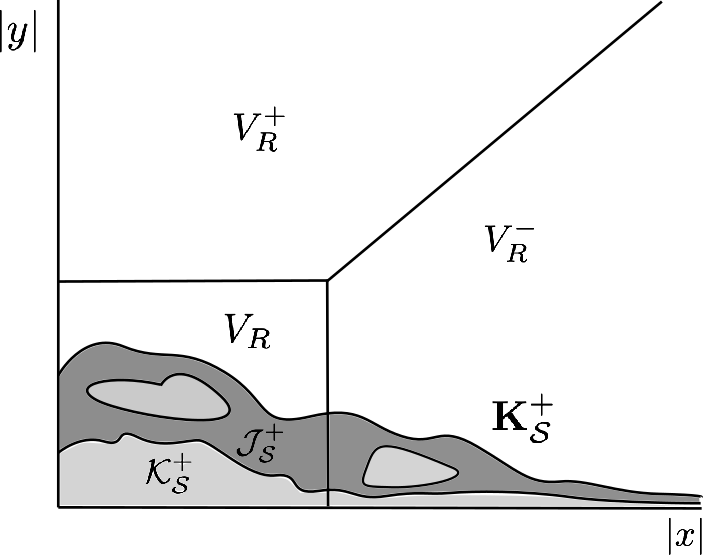}
\end{minipage}%
\begin{minipage}{.5\textwidth}
  \centering
  \includegraphics[scale=.9]{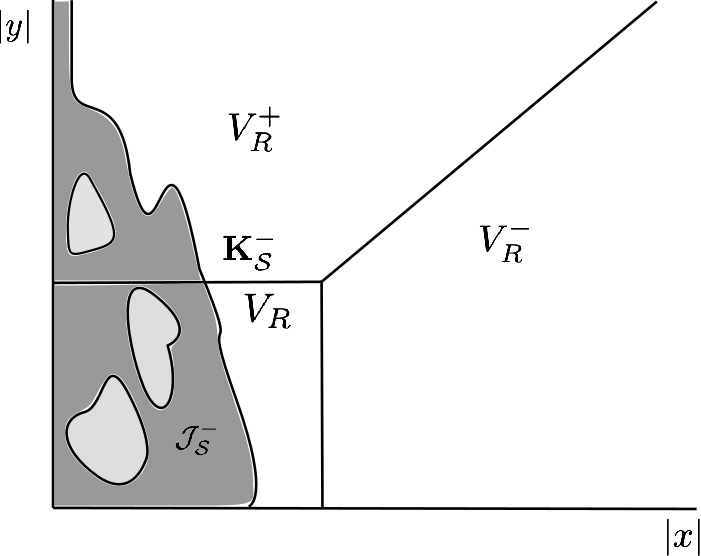}
  \end{minipage}
  \caption{$\ko_\s^+$, $\K_\s^+$ $\Ju_\s^+$, $\K_\s^-$ and $\Ju_\s^-$, with $\ko_\s^-=\emptyset$}
\label{f:filtration}
 \end{figure} 
The shaded region in Figure \ref{f:filtration} corresponds to the weak filled Julia sets $\K_\s^\pm$, with the lighter shade representing the Fatou components and the darker shades representing the Julia sets $\Ju_\s^\pm$ contained in them, respectively. 

\smallskip Finally we conclude this section, by observing the following crucial fact which will be very important for further computations
\begin{lem}\label{l:step 4}
Let $R>R_\s$, where $R_\s>0$ is the radius of filtration for the semigroup $\s$ and $C$ be a compact subset of $\C^2$.
\begin{enumerate}
	\item [(i)] Then there exists a positive integer $k_C \ge 1$ such that $h(C) \subset V_R \cup\I{V_R^+}$ for every $h \in \G_k$ and $k \ge k_C.$
	\item [(ii)] Then there exists a positive integer $\tilde{k}_C \ge 1$ such that $h^{-1}(C) \subset V_R\cup \I{V_R^-}$ for every $h \in \G_k$ and $k \ge \tilde{k}_C.$
\end{enumerate}
\end{lem}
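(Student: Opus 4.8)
The plan is to derive both claims directly from the uniform escape estimate in Lemma \ref{l:filtration}, exploiting the compactness of $C$. The first, purely set-theoretic, step is to record the inclusions
\[
\C^2 \sm V_R^- \subset V_R \cup \I{V_R^+}
\qquad\text{and}\qquad
\C^2 \sm V_R^+ \subset V_R \cup \I{V_R^-},
\]
which are immediate from the definitions of $V_R$ and $V_R^{\pm}$: if $(x,y) \notin V_R^-$, i.e.\ $|x| < \max\{|y|,R\}$, then either $|x|,|y| \le R$, so $(x,y) \in V_R$, or $|y| > \max\{|x|,R\}$, so $(x,y) \in \I{V_R^+}$; the second inclusion is the same with the roles of $x$ and $y$ interchanged. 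In view of these, for (i) it suffices to find $k_C$ with $h(C) \cap V_R^- = \emptyset$ for every $h \in \G_k$ and $k \ge k_C$, and for (ii) it suffices to find $\tilde{k}_C$ with $C \cap h(V_R^+) = \emptyset$, equivalently $h^{-1}(C) \cap V_R^+ = \emptyset$, for every $h \in \G_k$ and $k \ge \tilde{k}_C$.

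For (i) the key point is that the sequence $\{R_k\}$ furnished by Lemma \ref{l:filtration} may be chosen independently of the sequence $\seq{h}$: inspecting its proof one fixes $R_1 > R_\s$ and sets $R_k = m R_{k-1}^{d_0}$, so that $R_k \nearrow \infty$ and $V_{R_k} \cap h^{-1}(V_R^-) = \emptyset$ holds for \emph{every} $h \in \G_k$ and \emph{every} $k \ge 1$. Since $C$ is bounded, $C \subset V_{R_0}$ for some $R_0 > 0$; choose $k_C$ so large that $R_{k_C} > R_0$. Then for $k \ge k_C$ and $h \in \G_k$ one has $C \subset V_{R_0} \subset V_{R_k}$, and $V_{R_k}$ is disjoint from $h^{-1}(V_R^-)$, so $h(C) \cap V_R^- = \emptyset$ and hence $h(C) \subset V_R \cup \I{V_R^+}$ by the inclusion above. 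If one prefers not to appeal to the explicit form of $R_k$, the same conclusion follows by contradiction: a failure of (i) would produce indices $k_j \to \infty$, maps $h_{k_j} \in \G_{k_j}$ and points $z_j \in C$ with $h_{k_j}(z_j) \in V_R^-$; completing $\{h_{k_j}\}$ arbitrarily to a full sequence $\seq{h}$ with $h_k \in \G_k$ and applying Lemma \ref{l:filtration} gives $z_j \in h_{k_j}^{-1}(V_R^-) \subset \C^2 \sm V_{R_{k_j}}$, contradicting $z_j \in C \subset V_{R_0}$ once $R_{k_j} > R_0$.

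Part (ii) is handled in exactly the same fashion, now using the other disjointness relation of Lemma \ref{l:filtration}, namely $V_{R_k} \cap h(V_R^+) = \emptyset$ for $h \in \G_k$: choosing $\tilde{k}_C$ with $R_{\tilde{k}_C} > R_0$ forces $C \cap h(V_R^+) = \emptyset$, hence $h^{-1}(C) \cap V_R^+ = \emptyset$ and $h^{-1}(C) \subset V_R \cup \I{V_R^-}$, for all $h \in \G_k$ with $k \ge \tilde{k}_C$. I do not expect a genuine obstacle here; the one point that requires care is the uniformity in $h$ of the escape radii $R_k$ coming from Lemma \ref{l:filtration}, and it is precisely this uniformity, together with the compactness of $C$, that drives the argument — the remainder is a routine unwinding of the definitions of $V_R$ and $V_R^{\pm}$.
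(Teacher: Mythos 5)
Your proof is correct and follows essentially the same route as the paper: the paper also deduces both parts from Lemma \ref{l:filtration} by noting that a point of $C$ mapped into $V_R^-$ (resp.\ pulled back from $V_R^+$) by some $h \in \G_{k}$ with $k$ large would have to lie outside $V_{R_{k}}$, contradicting compactness of $C$ as $R_k \to \infty$. You merely make explicit the set-theoretic reduction $\C^2 \sm V_R^\mp \subset V_R \cup \I{V_R^\pm}$ and the independence of the radii $R_k$ from the particular maps, both of which the paper uses implicitly.
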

\begin{proof}
 Suppose the statement (i) is not true, i.e., there exist a sequence $\{z_n\} \subset C$ and a sequence $\{h_n\}$ with $h_n \in \G_{k_n}$, $k_n \to \infty$ as $n \to \infty$ such that $h_n(z_n) \in V_R^-$. Then by Lemma \ref{l:filtration}, $z_n \in h_n^{-1}(V_{R}^-) \subset V_{R_{k_n}}^-$. As $R_{k_n} \to \infty$ for $n \to \infty$, $\norm{z_n} \to \infty$. This contradicts the fact that $\{z_n\}$ is contained in the compact set $C$.

\smallskip A similar argument works for part (ii). 
\end{proof}

\section{Proof of Theorem \ref{t:result 1}}\label{s:3}
In this section, we will first complete the proof of Theorem \ref{t:result 1} and consequently observe a few important corollaries. To begin, let us recall the definition of the sequence of plurisubharmonic functions $\{G_k\}$, introduced in Section \ref{s:introduction},
\begin{align*}
G_k^+(z)=\frac{1}{D^k} \sum_{h \in \G_k} \log^+\|h(z)\| \text{ and }G_k^-(z)=\frac{1}{D^k} \sum_{h \in \G_k} \log^+\|h^{-1}(z)\|,
\end{align*}
where $D=D_\G$ is the total degree corresponding to the generating set $\G$ of the semigroup $\s.$
First, we note the following straightforward consequence from the results in Section \ref{s:2}.
\begin{lem}\label{l:compact sets on U+}
	Fix an $R \ge R_\s$, the radius of filtration for the semigroup $\s.$ Then
\begin{itemize}
	\item for a compact set $C^+ \subset \U_\s^+$ there exists a positive integer $\N_{C^+} \ge 1$ such that 
	$h(C^+) \subset V_{R}^+$ whenever $h \in \G_k$, $k \ge \N_{C^+}$;
	\item for a compact set $C^- \subset \U_\s^-$ there exists a positive integer $\N_{C^-} \ge 1$ such that 
	$h^{-1}(C^-) \subset V_{R}^-$ whenever $h \in \G_k$, $k \ge \N_{C^-}.$
\end{itemize}
\end{lem}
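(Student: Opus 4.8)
The plan is to deduce this directly from the structural descriptions of the escaping sets obtained in Corollary \ref{c:U+ and U-} together with the compactness/filtration argument already packaged in Lemma \ref{l:step 4}. First I would recall that by Corollary \ref{c:U+ and U-}(1),
\[
\U_\s^+ = \bigcup_{k \ge 1} \bigcap_{h \in \G_k} h^{-1}(V_R^+),
\]
and that the sets $\U_k := \bigcap_{h \in \G_k} h^{-1}(\I{V_R^+})$ form an increasing exhaustion of $\U_\s^+$ by open sets, with $\ov{\U_k} \subset \U_{k+1}$ (this is (\ref{e:U-inclusion}) from the proof of Proposition \ref{p:K+ closed}). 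Hence, given a compact $C^+ \subset \U_\s^+$, the open cover $\{\U_k\}_{k \ge 1}$ admits a finite subcover, and by nestedness there is a single $m \ge 1$ with $C^+ \subset \U_m$. By the very definition of $\U_m$, this means $h(C^+) \subset \I{V_R^+} \subset V_R^+$ for every $h \in \G_m$.

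The remaining point is to promote this from a single length $m$ to \emph{all} lengths $k \ge m$. For this I would write any $h \in \G_k$ with $k \ge m$ as $h = h' \circ h''$ where $h'' \in \G_m$ and $h' \in \G_{k-m}$ (every word of length $k$ splits this way into its first $m$ letters and the remaining $k-m$). Then $h''(C^+) \subset V_R^+$ by the previous paragraph, and since $V_R^+$ is forward-invariant under every generator by (\ref{e:VR+}) — indeed $\ov{\h_i(V_R^+)} \subset \I{V_R^+}$ — iterating gives $h'(h''(C^+)) = h(C^+) \subset V_R^+$. Setting $\N_{C^+} := m$ finishes the proof of the first bullet. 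The second bullet is entirely symmetric: use Corollary \ref{c:U+ and U-}(1) for $\U_\s^-$ (or the analogous increasing exhaustion $\ov{\uo}$/$\U$ built from $h(V_R^-)$), extract a single length $\tilde m$ with $h^{-1}(C^-) \subset V_R^-$ for all $h \in \G_{\tilde m}$, and then use the backward invariance $\ov{\h_i^{-1}(V_R^-)} \subset \I{V_R^-}$ from (\ref{e:VR+}) to extend to all $k \ge \tilde m$.

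I do not expect any genuine obstacle here — this is a "straightforward consequence," as the statement of the lemma itself advertises. The only mild subtlety worth stating carefully is the splitting $\G_k = \G_{k-m} \circ \G_m$ and the direction of composition, so that the invariance of $V_R^\pm$ is applied to the correct factor; once that bookkeeping is fixed, the forward-invariance of $V_R^+$ (resp. backward-invariance of $V_R^-$) under the generators, already recorded in (\ref{e:VR+}), does all the work. Alternatively, if one prefers to avoid the splitting argument, one can invoke Lemma \ref{l:step 4}(i) directly: it already gives an integer $k_{C^+}$ with $h(C^+) \subset V_R \cup \I{V_R^+}$ for all $h \in \G_k$, $k \ge k_{C^+}$, and then one only needs to rule out the $V_R$-part using $C^+ \subset \U_\s^+$ together with $V_R \cap h(\U_\s^+) = \emptyset$ for $h$ of sufficiently large length — again via Corollary \ref{c:U+ and U-} — taking $\N_{C^+} = \max\{k_{C^+}, m\}$.
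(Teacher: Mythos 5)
Your argument is correct and is essentially the paper's own proof, which simply cites Corollary \ref{c:U+ and U-} together with the exhaustion $\U_k$ (and its nestedness (\ref{e:U-inclusion})) from the proof of Proposition \ref{p:K+ closed}; you merely spell out the compactness step and the forward/backward invariance of $V_R^\pm$ from (\ref{e:VR+}) that make the containment persist for all lengths $k \ge \N_{C^\pm}$.
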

\begin{proof}
By Corollary \ref{c:U+ and U-} and the proof of Proposition \ref{p:K+ closed}, there exists $\N_{C^+}, \N_{C^-}\ge 1$ such that 
$$C^+ \subset \bigcap_{h \in \G_k} h^{-1}(V_{R}^+),\; C^- \subset \bigcap_{h \in \G_l} h(V_{R}^-) \text{ whenever } k \ge \N_{C^+} \text{ and } l \ge \N_{C^-}. \qedhere$$
\end{proof}
Now we are ready to present the proof of Theorem \ref{t:result 1}, which involves some steps. 
\begin{proof}[Proof of Theorem \ref{t:result 1}]
Let $R\ge R_\s$ be as in Lemma \ref{l:compact sets on U+}.

\smallskip\no 
\textit{Step 1: }The sequence $\{G_k^\pm\}$ converges uniformly to a pluriharmonic function on $V_{R}^\pm$.

\smallskip\no 
Suppose $(x,y) \in V_{R}^+$ and the constants $0<m<1<M$ be as assumed in Remark \ref{r:filtration estimate}. Then \begin{align}\label{e:BS2}
	\log m+ {d_i}\log|y|< \log|\pi_2 \circ \h_i(x,y)| < \log M+{d_i}\log|y|
\end{align}
for every $1 \le i \le n_0$. Let $h \in \G_k$. Then $h=\h_{j_1} \circ \cdots \circ \h_{j_k}$ where $1 \le j_i \le n_0$ for every $1 \le i \le k$ and the degree of $h$ (denoted by $d_h$) is given by the product $d_{j_1}\hdots d_{j_n}$. Also, recall $D=d_1+\cdots+d_{n_0}$, the total degree of $\s=\langle \G \rangle$. Now by (\ref{e:VR+}), for $(x,y) \in V_R^+$ and $k \ge 1$,
\begin{align*}
	G_k^+(x,y)=\sum_{h \in \G_k}  \frac{\log|\pi_2 \circ h(x,y)|}{D^k}.
\end{align*}
Hence $G_k^+$ is pluriharmonic on $V_R^+$ for every $k \ge 1$ and
\[
	G_{k+1}^+(x,y)=\sum_{i=1}^{n_0}\sum_{h \in \G_k}\frac{\log|\pi_2 \circ \h_i \circ h(x,y)|}{D^{k+1}}.
\]
 Thus from (\ref{e:BS2}) 
{\small \begin{align*}
	G_{k+1}^+(x,y) \le \bigg(\frac{n_0}{D}\bigg)^{k+1}\log M+\sum_{i=1}^{n_0}\sum_{h \in \G_k}\frac{d_i\log|\pi_2 \circ h(x,y)|}{D^{k+1}} \le \bigg(\frac{n_0}{D}\bigg)^{k+1}\log M+G_k^+(x,y). 
\end{align*}
}
 Similarly, by using the left inequality of (\ref{e:BS2}) we have
\begin{align*}
	 \bigg(\frac{n_0}{D}\bigg)^{k+1}\log m+G_k^+(x,y)\le G_{k+1}^+(x,y) \le  \bigg(\frac{n_0}{D}\bigg)^{k+1}\log M+G_k^+(x,y).
\end{align*}
Since $D \ge 2n_0$, the above inequality reduces to
\begin{align}\label{e:recursive_green}
|G_{k+1}^+(x,y)-G_k^+(x,y)| \le \bigg(\frac{1}{2}\bigg)^{k+1}M_0
\end{align}
where  $M_0=\max\{|\log m|,|\log M|\}$ and $(x,y) \in V_R^+.$
Thus the sequence $\{G_k^+\}$ is uniformly Cauchy on $V_R^+$, and hence it converges uniformly to a pluriharmonic function $G_{\G}^+$ on $V_R^+.$ A similar argument on $V_R^-$, proves the same for $G_{\G}^-.$

\smallskip\no 
\textit{Step 2: }The sequence $\{G_k^\pm\}$ converges uniformly to the pluriharmonic function $G_{\G}^\pm$ on compact subsets of $\U^\pm_{\s}$, respectively. 

\smallskip\no As noted earlier similar arguments work on $\U_{\s}^-$, so we complete the proof only for $\U_{\s}^+.$ Let $C$ be a compact subset of $\U^+_\s.$ By Lemma \ref{l:compact sets on U+}, there exists $\N_C \ge 1$ such that $h(C) \subset V_R^+$ for every $h \in \G_k$, $k \ge \N_C.$ Note that $\G_{k}$ has $n_0^{k}$ elements. Let $C_h=h(C) \subset V_R^+$ (by Lemma \ref{l:compact sets on U+}) for every $h \in \G_{\N_C}.$ Thus for $z \in C$
\[{G_k^+}(z)=\frac{1}{D^{\N_C}}\sum_{h \in \G_{\N_C}}{G_{k-{\N_C}}^+}(h(z)).\]
whenever $k>\N_C.$ Now by \textit{Step 1}, $\{G_{k-{\N_C}}^+\}$ is convergent on every $C_h$. Hence $\{G_k^+\}$ converges uniformly on $C \subset \U_\s^+$ to a pluriharmonic function and this completes \textit{Step 2.} Thus, $G_\G^\pm$ are (respectively) pluriharmonic on the $\U_\s^\pm$.

\smallskip Also $G_\G^\pm$ are pluriharmonic on $\I{\ko_\s^\pm}$, as $G_\G^\pm$ are identically zero in here. Next, for $z_0 \in \C^2$ and for $k \ge 1$, we define the following subsets of $\s$, dependent on $z_0$ as 
\begin{align*}
\s^b(z_0)=\{h \in \s: h(z_0) \in V_R \cup V_R^-\},\; &\G^b_k(z_0)=\s^b(z_0) \cap \G_k  \text{ and } \\ \s^u(z_0)=\{h \in \s: h(z_0) \in  \textsf{int}(V_R^+)\}, \;&\G^u_k(z_0)=\s^{u}(z_0) \cap \G_k.
\end{align*}
Note that by (\ref{e:VR+}), the following inequality about the cardinality of the sets $\G^b_k(z_0)$ and $\G^u_k(z_0)$ is immediate for every $k \ge 1$
\begin{align}\label{e:cardinality}
	\sharp\G^b_{k+1}(z_0) \le  n_0(\sharp\G^b_{k}(z_0)), \; n_0(\sharp\G^u_{k}(z_0)) \le \sharp \G^u_{k+1}(z_0).	
\end{align}
Now for $z_0 \in \U_\s^+$, there exists $k_{z_0} \ge 1$ such that $\G^b_k(z_0)=\emptyset$, for $k \ge k_{z_0}$. Otherwise from Lemma \ref{l:step 4}, there exists $k_{z_0} \ge 1$ such that $h(z_0) \in \ov{V_R}$ whenever $h \in \G^b_k(z_0)$ and $k \ge k_{z_0}$. Thus consider the following sequences of functions $\{G^b_k\}$ and $\{G^u_k\}$ defined on $\C^2$ as
\begin{align} \label{e:bdd unbdd}
G^b_k(z)=\sum_{h \in {\G^b_k(z)}}  \frac{\log^+ \|h(z)\|}{D^k} \text{ and }G^u_k (z)=\sum_{h \in \G^u_{k}(z)}  \frac{\log^+\| h(z)\|}{D^k}.
\end{align}
\begin{rem}\label{r:step 4}
Note that $G_k(z)=G^b_k(z)+G^u_k(z)$ for every $z \in \C^2$ and $k \ge 1$. Also if $C$ is a compact subset of $\C^2$ then $h(C) \subset V_R$ whenever $h \in \G_k^b$, $k \ge k_C$, as obtained in Lemma \ref{l:step 4}.	
\end{rem}

\no\textit{Step 3: }The sequence $\{G^b_k\}$ converges uniformly to zero on every compact subset $C \subset \C^2.$

\smallskip\noindent 
Since $C$ is a compact subset of $\C^2$ from Lemma \ref{l:step 4}, for $k \ge k_C$, we have 
\[\sup \big\{\|h(z)\|: z \in C \text{ and } h \in \G^b_k(z)\big\}\le R.\]
Now, by (\ref{e:cardinality}) for every $z \in \mathbb{C}^2$, $\sharp \G^b_k(z) \le \sharp \G_k= n_0^k.$ Hence by the above claim for $z \in C$ \[G^b_k(z) \le \bigg(\frac{n_0}{D}\bigg)^k \log R \le \bigg(\frac{1}{2}\bigg)^k \log R\to 0\] as $k \to \infty.$ This completes \textit{Step 3}.

\smallskip\no 
\textit{Step 4: }For every $z_0 \in \K_\s^+ \setminus \ko_\s^+$ there exist a constant $\widetilde{M}>1$ and a positive integer $\ell_{z_0} \ge 1$ such that for $k \ge \ell_{z_0},$ 
\begin{align}\label{e:z0 relation}
G^u_k(z_0)-\widetilde{M}\bigg(\frac{n_0}{D}\bigg)^{k+1}\le G^u_{k+1}(z_0) \le \bigg(\frac{n_0}{D}\bigg)^{k+1} \widetilde{M}+ G^u_k(z_0).
\end{align}
Since $z_0 \in \K_\s^+ \setminus\ko_\s^+$ there exists a sequence $\{h_n\} \subset \s$ such that $\|h_n(z_0)\| \to \infty$ as $n \to \infty.$ In particular $\s^u(z_0) \neq \emptyset$, i.e.,  by (\ref{e:cardinality}) there exist positive integers $k_{z_0},N_{z_0} \ge 1$ such that
\begin{align*}
	1 \le \sharp \G_{k_{z_0}}^u(z_0) =N_{z_0} < n_0^{k_{z_0}}.
\end{align*} 
Now note that from (\ref{e:VR+}) it follows that $\h_i \circ h(z_0) \in V_R^+$ whenever $h \in \G_{k_{z_0}}^u$ and $1 \le i \le n_0.$ Hence by (\ref{e:cardinality}) for every $k \ge k_{z_0}$ we have
\begin{align}\label{e:cardinality Gk}
n_0^{k-k_{z_0}} N_{z_0} \le \sharp \G_k^u(z_0) < n_0^{k}.
\end{align}
As $0<m<1$, from (\ref{e:cardinality Gk}), (\ref{e:bdd unbdd})  and (\ref{e:BS2}) it follows that for every $k \ge k_{z_0}$,
\begin{align*}
  G^u_{k+1}(z_0) &\ge \frac{1}{D^{k+1}}\sum_{i=1}^{n_0} \sum_{h \in \G^u_k(z_0)}  \log \|\h_i \circ h(z_0)\| =\sum_{i=1}^{n_0}\sum_{h \in \G^u_k(z_0)}  \frac{\log \|\h_i \circ h(z_0)\|}{D^{k+1}},
  \\ &\ge \sum_{i=1}^{n_0}\sum_{h \in \G^u_k(z_0)}\frac{d_i\log \| h(z_0)\|+\log m}{D^{k+1}}
  =\sum_{h \in \G^u_k(z_0)} \frac{\log \| h(z_0)\|}{D^{k}}+\frac{n_0(\sharp \G^u_k(z_0))}{D^{k+1}}\log m
  \\& \ge G_k^u(z_0)+\bigg(\frac{n_0}{D}\bigg)^{k+1} \log m.
\end{align*}
\no Now from Lemma \ref{l:step 4}, there exists $k'_{z_0} \ge 1$ such that for $h \in \G_k^b(z_0)$, $\|h(z_0)\| \le R$ whenever $k \ge k'_{z_0}.$ Let $\ell_{z_0}=\max\{k_{z_0}, k'_{z_0}\}$ and $B=\max\{\|\h_i(z)\|: z \in \overline{V_{R+1}} , 1 \le i \le n_0\}$. Further, for every $k,l \ge 1$, we introduce the following subsets $\G^u_{k,l}(z_0)$ of $\s$ defined as
\[\G^u_{k,l}(z_0)=\{h_1 \circ h_2: h_1 \in \G_l\text{ and }h_2 \in \G^u_{k}(z_0)\}.\]
Now by (\ref{e:VR+}), $\G^u_{k,l}(z_0) \subset \G^u_{k+l}(z_0)$ and $\sharp\G^u_{k,l}(z_0)=n_{0}^l (\sharp \G^u_{k}(z_0))$ for $k \ge \ell _{z_0}.$ Let $h \in \G^u_{k+1} \sm \G^u_{k,1}$ then $h=\h_i \circ \tilde{h}$ for some $\tilde{h} \in \G^b_k$ and $1 \le i \le n_0.$ Thus by the above assumption, $ |\pi_2 \circ h(z_0)|\le B$. Since $$\sharp (\G^u_{k+1}(z_0) \setminus \G^u_{k,1}(z_0)) \le n_0^{k+1}-n_0 (\sharp \G^u_{k}(z_0))$$ and
\begin{align*}
 G^u_{k+1}(z_0)=\frac{1}{D^{k+1}} \sum_{h \in \G^u_{k,1}(z_0)} \log\| h(z_0)\|+\frac{1}{D^{k+1}}\sum_{h \in \G^u_{k+1}(z_0) \setminus \G^u_{k,1}(z_0)}  {\log \| h(z_0)\|},
 \end{align*}
 we have the following inequations (as before)
 \begin{align*}
 G^u_{k+1}(z_0)&=\frac{1}{D^{k+1}} \sum_{i=1}^{n_0}\sum_{h \in \G^u_{k}(z_0)} \log\| \h_i \circ h(z_0)\|+\frac{1}{D^{k+1}}\sum_{h \in \G^u_{k+1}(z_0) \setminus \G^u_{k,1}(z_0)}  {\log \| h(z_0)\|} \\
 & \le \sum_{i=1}^{n_0}\sum_{h \in \G^u_k(z_0)}  \frac{d_i\log \| h(z_0)\|+\log M}{D^{k+1}}+\frac{1}{D^{k+1}}\sum_{h \in \G^u_{k+1}(z_0) \setminus \G^u_{k,1}(z_0)}  {\log \| h(z_0)\|} \\
& \le G^u_k(z_0)+ \frac{n_0 (\sharp \G^u_{k}(z_0))}{D^{k+1}} \log M +\frac{\sharp (\G^u_{k+1}(z_0) \setminus \G^u_{k,1}(z_0))}{D^{k+1}}\log B\\
&\le G^u_k(z_0)+\bigg(\frac{n_0}{D}\bigg)^{k+1} \widetilde{M}
\end{align*}
where $\widetilde{M}=\max\{|\log M|,|\log B|, |\log m|\}$. This completes the proof of \textit{Step 4}.

\smallskip Since $G^u_k \equiv 0$ on $\ko_\s^+$ and $G^b_k \equiv 0$ on $\U_\s^+$, by  \textit{Steps 2, 3} and \textit{4}, $G_k^+$ converges pointwise to a non-negative function $G_\G^+$ on $\C^2$ which is identically zero on $\ko_\s^+$ and pluriharmonic on $\U_\s^+.$
 
 \smallskip\no 
 \textit{Step 5: }$G_\G^+$ is non-negative, continuous and plurisubharmonic on $\uo_\s^+.$
  
 \smallskip
Let $C$ be a compact subset of $\uo_\s^+$ and $z \in C.$ By the same reasoning as in \textit{Step 5}, there exist $k_z, N_z \ge 1 $ such that 
\begin{align*}
	1 \le \sharp \G_{k_{z}}^u(z) =N_{z} \le n_0^{k_{z}}.
\end{align*}
Since $\mathsf{int} (V_R^+)$ is open, there exists $\delta_z>0$ such that the closed ball $\overline{B(z;\delta_z)}$ is contained in $\uo_\s^+$ and $h(B(z;\delta_z)) \subset \mathsf{int} (V_R^+)$ for every $h \in \G^u_{k_z}(z)$, i.e., for every $\xi \in B(z; \delta_z)$
\[ \G^u_{k_z}(z) \subseteq \G^u_{k_z}(\xi) \text{ and } N_z \le \sharp \G_{k_{z}}^u(\xi) \le n_0^{k_z}.\]
Since $\overline{B(z;\delta_z)}$ is compact, by Lemma \ref{l:step 4}, there exists $k'_z \ge 1$, such that $h(\xi) \in V_R$ whenever $h \in \G^b_k(\xi)$ for every $k \ge k'_z$ and $\xi \in \overline{B(z;\delta_z)}.$ Hence by exactly similar argument as in the proof of \textit{Step 4}, for every $k \ge \max\{k_z,k'_z\}$ 
\begin{align*}
G^u_k(\xi)-\widetilde{M}\bigg(\frac{n_0}{D}\bigg)^{k+1}\le G^u_{k+1}(\xi) \le \bigg(\frac{n_0}{D}\bigg)^{k+1} \widetilde{M}+ G^u_k(\xi)
\end{align*}
where $\widetilde{M}$ is as chosen in \textit{Step 4}. Now for a given $\ep>0$, there exist a positive integer $\ell^u_z \ge \max\{k_z,k'_z\}$ sufficiently large, such that for every $p,q \ge \ell^u_z$ and for every $\xi \in \overline{B(z;\delta_z)}$
\begin{align*}
\|G^u_p(\xi)-G^u_q(\xi)\| \le \bigg(\frac{1}{2}\bigg)^{\ell^u_z} \widetilde{M}\le \frac{\ep}{2}.
\end{align*}
As $C$ is compact, there exists a finite collection $\{z_i \in C: 1 \le i \le N_C\}$ such that $C \subset \cup_i B(z_i; \delta_{z_i}).$ Let $\widetilde{C}=\overline{\cup_i B(z_i; \delta_{z_i})}$, then by Lemma \ref{l:step 4}, there exists $\ell_C^\ep \ge \max\{\ell^u_{z_i}: 1 \le i\le N_C\}$ sufficiently large, such that for every $\xi \in \widetilde{C}$ and for every $p,q \ge \ell^\ep_C$
\begin{align*}
 \|G^b_p(\xi)- G^b_{q}(\xi)\|< \bigg(\frac{1}{2}\bigg)^{\ell^\ep_C-1} \log R< \frac{\ep}{2}.
 \end{align*}
Thus for every $\xi \in C$ and $p,q \ge \ell_C^\ep$
\[ \|G^+_p(\xi)- G^+_{q}(\xi)\|< \ep,\]
i.e., $\{G_k^+\}$ is uniformly Cauchy on the compact set $C.$ Further as $\{G_k^+\}$ is a non-negative, subharmonic and continuous sequence of functions on $\uo_\s^+$, so is $G_\G^+.$

\smallskip\no \textit{Step 6:} $G_\G^+$ is non-negative, non-constant, continuous and plurisubharmonic on $\mathbb{C}^2.$

\smallskip Note that if $\ko_\s^+$ is empty there is nothing to proof. So we assume $\ko_\s^+ \neq \emptyset$. By (\ref{e:recursive_green}), it follows that on $V_R^+$ 
\[ \|G_\G^+-\log |y|\| \le  M_0\sum_{i=0}^\infty \bigg(\frac{n_0}{D}\bigg)^{i}.\]
Thus for $(x,y) \in V_R^+$ with $|y|$ sufficiently large, $G_\G^+(x,y)$ is both positive and non-constant. Now from \textit{Step 5}, $G_\G^+$ is continuous on $\uo_\s^+$ and it is identically zero on $\ko_\s^+$. Hence to establish the continuity of $G_\G^+$, it is sufficient to prove $G_\G^+$ is continuous on $\partial \ko_\s^+=\partial \uo_\s^+.$ 
		
\smallskip \no We will prove it by contradiction, so suppose there exists a sequence $\{z_k\} \subset \uo_\s^+$ such that $G_\G^+(z_k)>c>0$ for every $k \ge 1$ and $z_k \to z_0 \in \partial \ko_\s^+. $ Choose $k_0 \ge 1$, large enough such that 
\[c > \frac{\widetilde{M}}{2^{k_0}}\sum_{i=1}^\infty \bigg(\frac{n_0}{D}\bigg)^{i},\] 
where $\widetilde{M}$ is as obtained in \textit{Step 4}. Also, we may assume for every $k \ge 1$,

\[\sup \big\{\|h(z_k)\|: h \in \G^b_{k_0}(z_k)\big\}\le R.\] 
\textit{Claim:} For every $k \ge 1$, $\G_{k_0}^u(z_k) \neq \emptyset.$ 

\smallskip If not, then $\G_{k_0}^u(z_l)=\emptyset$ for some fixed $l \ge 1$, i.e., $\G_{k}^u(z_l) = \emptyset$ for every $1 \le k \le k_0.$ However, there exists $k_l>k_0$ such that $\G_{k_l}^u(z_l)\neq \emptyset$, as $G_\G^+(z_l)>0$. Let $k'_l$ be the least of all such numbers, i.e., $\G_k^u(z_l) \neq \emptyset$ for every $k \ge k_l'>k_0.$ Thus for every $k \ge k_l$		
\[ G^u_{k}(z_l) \le \sum_{i=k_l}^\infty\bigg(\frac{n_0}{D}\bigg)^{i} \widetilde{M} < c.\] As $G^b_k(z_l) \to 0$, the above thus proves that $ G_\G^+(z_l) < c $, which contradicts the assumption on the sequence $\{z_k\}$. Hence the claim follows. 

\smallskip Since $\G_{k_0}^u(z_k) \subset \G_{k_0}$ for every $k \ge 1$ and $\sharp \G_{k_0}$ is finite, there are only finitely many subsets of $\G_{k_0}$. Thus there exists a subsequence $\{z_{k_n}\}$ of $\{z_k\}$ such that the sets $\G_{k_0}^u(z_{k_n})$ are equal for every $n \ge 1.$ Define the sequence $\{h_l\}$ as $h_l=\h_1^l \circ h$ for some $h \in \G_{k_0}^u(z_{k_n})$. Since $h(z_{k_n}) \in V_{R}^+$, $h_l(z_{k_n}) \in V_{R_l}^+$, where $\{R_l\}$ is as obtained in Lemma \ref{l:filtration}, for every $l,n\ge 1$. As $z_{k_n} \to z_0$, this implies 
$h_l(z_0) \in \overline{V_{R_l}^+}$, in particular it contradicts that $z_0 \in \partial \ko_\s^+=\partial \uo_\s^+.$

\smallskip  Hence for every sequence $\{z_k\} \in \uo^+_\s$ and $z_k \to z_0$, $G^+_\G(z_k) \to 0$, which consequently proves $G_\G^+$ is continuous on $\C^2.$
Finally, as $G_\G^+$ coincides with its upper semicontinuous regularisation of $G_\G^+$ and satisfies the sub-mean value property on $\partial \ko_\s^+$, $G_\G^+$ is plurisubharmonic on $\C^2.$

\smallskip Similarly replicating \textit{Step 3,4,5} and \textit{6} for the semigroup $\s^-$ with $\G^-$ as the generating set, gives that $G_\G^-$ is a plurisubharmonic continuous function on $\C^2.$
\end{proof}

\begin{cor}\label{c:Green constant}
	There exist constants $c_{\G}^\pm \in \mathbb{R}$ such that for $(x,y)\in V_R^\pm$ (respectively).
 $$G_{\G}^+(x,y)=\log |y|+O(1) \text{ and }G_{\G}^-(x,y)=\log |x|+O(1).$$
	\end{cor}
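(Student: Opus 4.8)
The plan is to read off the asymptotic behaviour of $G_\G^+$ on $V_R^+$ directly from the uniform convergence established in \emph{Step 1} of the proof of Theorem \ref{t:result 1}, and symmetrically for $G_\G^-$ on $V_R^-$. Recall that on $V_R^+$ the functions $G_k^+$ are pluriharmonic and, by the telescoping estimate \eqref{e:recursive_green}, the bound $|G_{k+1}^+(x,y)-G_k^+(x,y)|\le (1/2)^{k+1}M_0$ holds uniformly on $V_R^+$, where $M_0=\max\{|\log m|,|\log M|\}$. Summing this geometric series from $k=1$ to $\infty$ and using that $G_1^+(x,y)=\frac{1}{D}\sum_{i=1}^{n_0}\log|\pi_2\circ\h_i(x,y)|$ together with the filtration estimate \eqref{e:BS2}, i.e.\ $\log m+d_i\log|y|<\log|\pi_2\circ\h_i(x,y)|<\log M+d_i\log|y|$, one obtains $|G_1^+(x,y)-\log|y||\le M_0$ on $V_R^+$ (using $\sum d_i=D$). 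Combining, $|G_\G^+(x,y)-\log|y||\le M_0\sum_{i\ge 0}(1/2)^i = 2M_0$, which is exactly the bound already recorded in \emph{Step 6} of the proof above. Hence $G_\G^+(x,y)=\log|y|+O(1)$ on $V_R^+$, and the analogous argument on $V_R^-$, using $m|x|^{d_i}<|\pi_1\circ\h_i^{-1}(x,y)|<M|x|^{d_i}$ from Remark \ref{r:filtration estimate}, gives $G_\G^-(x,y)=\log|x|+O(1)$ on $V_R^-$.

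More is in fact true: since each $G_k^+$ is pluriharmonic on $V_R^+$ and $G_\G^+$ is the uniform limit, $G_\G^+$ is pluriharmonic on $V_R^+$, and $G_\G^+(x,y)-\log|y|$ is pluriharmonic there as well (away from the zero set of $\pi_2$, which does not meet $V_R^+$). Being a bounded pluriharmonic function, it is a genuine bounded harmonic-type quantity; if one wants the constant $c_\G^+$ to be a true limit rather than just an $O(1)$ term, one argues as follows. Restrict to the one-parameter family $y\mapsto(x_0,y)$ with $(x_0,y)\in V_R^+$, i.e.\ $|y|\ge\max\{|x_0|,R\}$; on this slice $G_\G^+(x_0,y)-\log|y|$ is a bounded harmonic function of $y$ on the region $|y|>\max\{|x_0|,R\}$, hence extends to $|y|=\infty$ and has a limit as $|y|\to\infty$. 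The semi-invariance relation of Corollary \ref{c:result 2}, $\sum_{i=1}^{n_0}G_\G^+\circ\h_i = D\,G_\G^+$, combined with the leading-order behaviour $\pi_2\circ\h_i(x,y)\sim (\text{const})\,y^{d_i}$, forces this limit to be independent of $x_0$ and pins down $c_\G^+$ as the value $\lim_{|y|\to\infty}\big(G_\G^+(x,y)-\log|y|\big)$.

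The argument is essentially immediate given the machinery already in place, so there is no serious obstacle; the only point requiring a little care is the passage from the qualitative $O(1)$ estimate to the existence of a genuine constant $c_\G^+$, which is why one invokes pluriharmonicity of $G_\G^+-\log|y|$ on $V_R^+$ and the maximum principle on slices, rather than merely quoting \eqref{e:recursive_green}. For the statement as written — namely $G_\G^\pm = \log|y|+O(1)$ and $\log|x|+O(1)$ — even that refinement is unnecessary, and the proof reduces to the three-line geometric-series computation above.
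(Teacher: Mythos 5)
Your argument is correct and is essentially the paper's own proof: the paper likewise obtains the bound $\big|G_k^+(x,y)-\log|y|\big|\le M_0\sum_{i=1}^{k}\big(\tfrac{n_0}{D}\big)^{i}$ on $V_R^+$ directly from the telescoping estimate \eqref{e:recursive_green} and sums the geometric series, with the symmetric argument on $V_R^-$. Your additional discussion of upgrading the $O(1)$ term to a genuine limit $c_\G^+$ goes beyond what the paper proves, but, as you note, it is not needed for the statement.
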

\begin{proof}
It follows directly from the equation (\ref{e:recursive_green}), in the proof Theorem \ref{t:result 1}, i.e., on $V_R^+$
	\[-M_0\sum_{i=1}^k \bigg(\frac{n_0}{D}\bigg)^{i} +\log|y| \le G_k^+(x,y) \le M_0\sum_{i=1}^k \bigg(\frac{n_0}{D}\bigg)^{i} +\log|y|. \]	
	Similarly the analogue to (\ref{e:recursive_green}) on $V_R^-$ gives the result for $G_\G^-.$
\end{proof}
\begin{cor}\label{c:non-zero on uo+}
The functions $G_{\G}^\pm >0$ 	restricted to $\uo^\pm_\s$ (respectively).
\end{cor}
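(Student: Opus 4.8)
The plan is to show that $G_\G^+$ is strictly positive at every point of the weak escaping set $\uo_\s^+$ (the argument for $G_\G^-$ on $\uo_\s^-$ being identical, run for the semigroup $\s^-$). Recall from the proof of Theorem \ref{t:result 1} that $\uo_\s^+ = \bigcup_{k\ge 1}\bigcup_{h\in\G_k}h^{-1}(V_R^+)$ by Corollary \ref{c:U+ and U-}, so if $z\in\uo_\s^+$ there is some $k_0\ge 1$ and some $h_0\in\G_{k_0}$ with $h_0(z)\in\textsf{int}(V_R^+)$; equivalently $\sharp\G^u_{k_0}(z)\ge 1$. First I would invoke the semi-invariance identity from Corollary \ref{c:result 2}, iterated $k_0$ times, to write
\[
D^{k_0}\,G_\G^+(z)=\sum_{h\in\G_{k_0}}G_\G^+(h(z)) \ge G_\G^+(h_0(z)),
\]
using that $G_\G^+\ge 0$ everywhere (established in \textit{Step 6}). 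So it suffices to show $G_\G^+>0$ on $\textsf{int}(V_R^+)$, in fact on all of $V_R^+$.

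For that, I would use the growth estimate already packaged as Corollary \ref{c:Green constant} (or directly the inequality (\ref{e:recursive_green})): on $V_R^+$ one has $G_\G^+(x,y)=\log|y|+O(1)$, with the $O(1)$ term bounded by $M_0\sum_{i\ge 1}(n_0/D)^i$, which is a finite constant since $D\ge 2n_0$. This immediately gives positivity for $|y|$ large, but on $V_R^+$ one only knows $|y|\ge R$, and $\log R$ could in principle be swamped by the error constant. The remedy is the filtration estimate of Lemma \ref{l:filtration}: for any $h\in\G_k$ and any $(x,y)\in V_R^+$, the second coordinate of $h(x,y)$ grows like $|y|^{d_h}$ up to the constants $m,M$, so $\pi_2\circ h$ maps $V_R^+$ into $|y|\ge R_k$ with $R_k\to\infty$. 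Concretely, from $m|y|^{d_i}<|\pi_2\circ\h_i(x,y)|$ on $V_R^+$ (Remark \ref{r:filtration estimate}) one builds, for $h\in\G_{k_0}$ arbitrary, a chain giving $|\pi_2\circ h(z)|\ge R_{k_0}$ once $z\in V_R^+$; choosing $k_0$ large enough that $\log R_{k_0}$ exceeds the (fixed) error constant then yields $G_\G^+(h(z))>0$, and hence $G_\G^+(z)>0$ via the displayed semi-invariance inequality above. Since every point of $V_R^+$ itself also satisfies $\G^u_{k_0}(z)\supseteq\{h\}$ for every $h\in\G_{k_0}$ (as $h(V_R^+)\subset\textsf{int}(V_R^+)$ by (\ref{e:VR+})), this simultaneously covers $V_R^+$ and, by the first paragraph, all of $\uo_\s^+$.

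The one point requiring a little care — and the step I expect to be the main obstacle — is the bookkeeping in the last paragraph: one must make sure the same finite error constant controls $G_\G^+-\log|y|$ uniformly on $V_R^+$ regardless of which $h_0$ and $k_0$ arise, and that the iterated filtration bound $R_{k_0}\to\infty$ is genuinely uniform over $h\in\G_{k_0}$ (which it is, since $R_k=mR_{k-1}^{d_0}$ depends only on the minimal degree $d_0$, not on the particular word). Once that uniformity is in hand, choosing $k_0$ with $\log R_{k_0}>M_0\sum_{i\ge 1}(n_0/D)^i$ closes the argument. An alternative, perhaps cleaner, route avoids the iteration entirely: observe directly that $\uo_\s^+\cap\ko_\s^+=\emptyset$, so any $z\in\uo_\s^+$ has an orbit element escaping to infinity, i.e. there is $h\in\s$ with $\|h(z)\|$ as large as we like; feeding a sufficiently large such value into the semi-invariance relation together with the lower bound $G_\G^+\circ h\ge \log\|h(z)\|+O(1)$ valid once $h(z)\in V_R^+$ again forces $G_\G^+(z)>0$. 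I would present whichever of these makes the constants most transparent, but both reduce to the same core fact: positivity of $G_\G^+$ on $V_R^+$, which is immediate from Corollary \ref{c:Green constant}.
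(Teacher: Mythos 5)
Your proposal is correct and follows essentially the same route as the paper: for $z\in\uo_\s^+$ you push the point into $V_R^+$ (indeed deep into $V_{R_k}^+$ via Lemma \ref{l:filtration}) so that the logarithmic growth of Corollary \ref{c:Green constant} forces $G_\G^+>0$ there, and then the semi-invariance relation of Corollary \ref{c:result 2} together with non-negativity of $G_\G^+$ transfers the positivity back to $z$. Your extra bookkeeping about the $O(1)$ constant versus $\log R_{k_0}$ is just a more explicit version of the paper's implicit use of Lemma \ref{l:filtration} alongside Corollary \ref{c:Green constant}.
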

\begin{proof}
	From Corollary \ref{c:Green constant} and Lemma \ref{l:filtration}, for $z \in \uo^+_\s$ there exists $n_z \ge 1$ and $h^z \in \G_{n_z}$ such that $h^z(z) \in V_R^+$ and $G_{\G}^+(h^z(z))>0.$ Then from Corollary \ref{c:result 2} we have
	$\displaystyle G_{\G}^+(z) \ge \frac{1}{D^{n_z}}G^+_{\G}(h^z(z)) >0.$
	\end{proof}
	\begin{rem}\label{r:ko pseudoconcave}
	The above corollary also proves $\ko_\s^+$ is pseudoconcave, provided it is non-empty.
	\end{rem}
\section{Proof of Theorem \ref{t:result 3}}\label{s:4}
Recall from Remark \ref{r:inclusions}, the cumulative (positive and negative) Julia sets are contained in the (positive and negative) Julia sets. Our goal in this section is to prove that these two sets are actually equal, by analysing the supports of the positive $(1,1)$-currents on $\C^2$ defined as
\[ \mu_\G^+ =\frac{1}{2\pi} dd^c G_\G^+ \text{ and }\mu_\G^- =\frac{1}{2\pi} dd^c G_\G^-.\]
Note that the above fact is also true for the dynamics of semigroups of rational functions in $\mathbb{P}^1$, and is proved using Ahlfor's covering lemma in \cite{HM}, a tool not available in higher dimensions. Instead we will use the Harnack's inequality for harmonic functions and the properties of the dynamical Green's functions $G_\G^\pm.$ 
\begin{thm}\label{t:convergence of mu}
	Let $\{\g_k\}$ be the sequence of plurisubharmonic functions on $\C^2$ defined as
	\[\g_k^\pm(z)= \frac{1}{D^k}\sum_{h \in \G_k} G_h^\pm(h^\pm(z))=\frac{1}{D^k}\sum_{h \in \G_k} d_hG_h^\pm(z) .\] Then the sequences $\{\g_k^\pm\}$ converge uniformly to $G_\G^\pm$ (respectively) on compact sets of $\C^2$. 	
\end{thm}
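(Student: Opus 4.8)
The plan is to compare the auxiliary sequence $\{\g_k^\pm\}$ with the Green's sequence $\{G_k^\pm\}$ from (\ref{e:Green sequence}) and to exploit the fact, already established in the proof of Theorem \ref{t:result 1}, that $\{G_k^\pm\}$ converges uniformly to $G_\G^\pm$ on compact sets. I focus on the positive case, the negative case being symmetric under passing to $\s^-$ with generating set $\G^-$. First I would record the key single-map estimate: for a H\'enon map $h$ of degree $d_h$ one has the uniform bound $\bigl|G_h^+(z) - \tfrac{1}{d_h}\log^+\|h(z)\|\bigr| \le C_h$ on all of $\C^2$, with $C_h$ controlled by the filtration constants; indeed, iterating $h$ once inside its own $V_R^+$ gives $G_h^+ = \lim_j d_h^{-j}\log^+\|h^j\|$, and the telescoping estimate (exactly as in (\ref{e:recursive_green}) but for the single map $h$) yields $|G_h^+(z) - d_h^{-1}\log^+\|h(z)\|| \le \sum_{j\ge 1} d_h^{-j} M_h$ for $z$ with $h(z)\in V_R^+$, while both sides vanish when $h(z)\in \I{\ko_h^+}$ and can be compared on the remaining (compact, after finitely many iterates land in $V_R$) region. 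Crucially, since all the $p_j$ appearing in (\ref{e:fghm}) for the finitely many generators are fixed, this $C_h$ is \emph{uniform over all $h\in\s$}: the filtration radius $R_\s$ and the constants $0<m<1<M$ of Remark \ref{r:filtration estimate} do not depend on $h$, so $C_h \le C_0$ for an absolute constant $C_0$.

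Next I would average this over $\G_k$. Since $\g_k^+(z) = \tfrac{1}{D^k}\sum_{h\in\G_k} d_h G_h^+(z)$ and $G_k^+(z) = \tfrac{1}{D^k}\sum_{h\in\G_k}\log^+\|h(z)\|$, the single-map estimate gives
\begin{align*}
\bigl|\g_k^+(z) - G_k^+(z)\bigr| \le \frac{1}{D^k}\sum_{h\in\G_k} d_h\Bigl|G_h^+(z) - \tfrac{1}{d_h}\log^+\|h(z)\|\Bigr| \le \frac{C_0}{D^k}\sum_{h\in\G_k} d_h.
\end{align*}
Now $\sum_{h\in\G_k} d_h = \sum_{1\le i_1,\dots,i_k\le n_0} d_{i_1}\cdots d_{i_k} = \bigl(\sum_i d_i\bigr)^k = D^k$, so the right-hand side is just $C_0$ — a bound, but not yet a convergence statement. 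To upgrade this to uniform convergence one has to use the geometric decay: the single-map error $\bigl|G_h^+(z)-d_h^{-1}\log^+\|h(z)\|\bigr|$ is not merely bounded but is $O\bigl(d_h^{-1}\bigr) = O(2^{-k})$ for $h\in\G_k$ uniformly on compact sets, because $d_h \ge 2^k$ and the telescoping tail starts at $d_h^{-2}$ relative to $\log^+\|h(z)\|$ once $h(z)\in V_R^+$, while on a fixed compact $C$ Lemma \ref{l:step 4} forces, for $k\ge k_C$, every $h\in\G_k$ to send $C$ into $V_R\cup\I{V_R^+}$, so the "bounded orbit" part contributes at most $O(2^{-k}\log R)$ exactly as in \emph{Step 3}. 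Hence $\bigl|\g_k^+(z)-G_k^+(z)\bigr| \le C_0' D^{-k}\sum_{h\in\G_k}\max(1, d_h\cdot d_h^{-1}\cdot 2^{-k}\cdot d_h) $ — here I would instead argue cleanly by splitting $\G_k = \G_k^u(z)\sqcup\G_k^b(z)$ as in (\ref{e:bdd unbdd}): on $\G_k^b(z)$ both $d_h^{-1}\log^+\|h(z)\|$ and $d_h G_h^+(z)\le \log^+\|h(z)\| \cdot$(tail) are $O(\log R)$ after weighting, giving an $O((n_0/D)^k)=O(2^{-k})$ contribution, and on $\G_k^u(z)$ the per-term error is $d_h \cdot \sum_{j\ge 2} d_h^{-j}\widetilde M \le \widetilde M/d_h \le \widetilde M\,2^{-k}$, and there are at most $n_0^k$ such terms, contributing $O((n_0/D)^k)=O(2^{-k})$. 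Summing, $\|\g_k^+ - G_k^+\|_{C} \le C_C\,2^{-k}\to 0$.

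Finally, combining with $\|G_k^+ - G_\G^+\|_C \to 0$ from Theorem \ref{t:result 1} (\emph{Steps 2, 3, 5, 6}, which give uniform convergence on every compact subset of $\C^2$), the triangle inequality yields $\g_k^+ \to G_\G^+$ uniformly on compact subsets of $\C^2$, and likewise $\g_k^- \to G_\G^-$. The identity $G_h^+(h(z)) = d_h G_h^+(z)$ used in the statement is just the functional equation of the single-map Green's function, so the two displayed formulas for $\g_k^\pm$ agree. I expect the main obstacle to be the bookkeeping in the single-map uniform estimate — specifically, verifying that the constant $C_0$ (equivalently the tail bound $\widetilde M$ and filtration data) can genuinely be taken independent of $h\in\s$, which hinges on the fact that $\s$ has only finitely many generators so that only finitely many polynomials $p_j$ and coefficients $a_j$ occur; once that uniformity is in hand, the averaging and the geometric decay are routine and parallel \emph{Steps 3 and 4} of the previous proof.
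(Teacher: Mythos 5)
Your overall strategy is the same as the paper's: bound $|\g_k^+-G_k^+|$ on a compact set $C$ by the weighted per-map errors $\frac{d_h}{D^k}\bigl|G_h^+(z)-d_h^{-1}\log^+\norm{h(z)}\bigr|$, split $\G_k=\G_k^u(z)\sqcup\G_k^b(z)$, control the unbounded part by the telescoping filtration estimate (your computation there is essentially the paper's \textit{Step 2}, giving a per-map error $O(1/d_h)$ and hence an $O((n_0/D)^k)$ contribution), and finish with the uniform convergence $G_k^\pm\to G_\G^\pm$ on compacts plus the triangle inequality.

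The genuine gap is in the bounded part $\G_k^b(z)$. The inequality you invoke there, $d_hG_h^+(z)\le \log^+\norm{h(z)}\cdot(\text{tail})$, is false in general: for $h\in\G_k^b(z)$ one may have $\log^+\norm{h(z)}=0$ while $z\notin K_h^+$ and $G_h^+(z)>0$, since the $h$-orbit can escape only after further bounded excursions; likewise ``both sides vanish when $h(z)$ lies in the filled Julia set of $h$'' fails, and the appeal to \textit{Step 3} of Theorem \ref{t:result 1} is off target, because that step controls $D^{-k}\sum_{h\in\G_k^b(z)}\log^+\norm{h(z)}$, not $D^{-k}\sum_{h\in\G_k^b(z)}d_hG_h^+(z)$. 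What is actually needed\,---\,and what the paper proves as \textit{Step 1} of its proof\,---\,is the uniform bound $G_h^+(z)\le(\log B+\log M)/d_h$ for $h\in\G_k^b(z)$, $z\in C$, $k\ge \ku_C$: one tracks the orbit $h^n(z)$ together with the partial compositions $h_l\circ\cdots\circ h_1\circ h^{n}(z)$ until the first time such a partial image enters $V_R^+$; at that moment its norm is at most $B$ (it is the image of a point of $V_R$ under a single generator), and the filtration growth then gives $\log^+\norm{h^{j+n_z}(z)}\le d_h^{j}(\log B+\log M)$, whence the bound after dividing by $d_h^{j+n_z}$ and letting $j\to\infty$. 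Without this (or an equivalent) argument your per-term estimate on $\G_k^b(z)$ is unsubstantiated, and it is the only non-routine point of the proof. A minor further correction: uniform convergence of $G_k^+$ on an arbitrary compact subset of $\C^2$ is not a direct consequence of \textit{Steps 2, 3, 5, 6} of Theorem \ref{t:result 1}\,---\,the constants there degenerate near $\partial\K_\s^+$\,---\,but is proved separately in Lemma \ref{l:uniform convergence} using the continuity of $G_\G^+$, so that is the result you should cite for the final triangle-inequality step.
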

Thus, we first observe Lemma \ref{l:uniform convergence}, which is an extension of the proof of Theorem \ref{t:result 1}. 
\begin{lem}\label{l:uniform convergence}
The sequences $\{G_k^\pm\}$\,---\,as in Section \ref{s:3}\,---\,converge uniformly on compact subsets of $\C^2$ to $G_\G^\pm$, respectively.	
\end{lem}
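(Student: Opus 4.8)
The plan is to prove Lemma \ref{l:uniform convergence} by combining the local uniform-convergence statements already obtained in the proof of Theorem \ref{t:result 1} on the three natural pieces of $\C^2$, and then upgrading to \emph{global} uniform convergence on compact sets by a compactness/quantitative argument near the boundary $\partial\ko_\s^\pm$. Throughout I work with $G_\G^+$ (the case of $G_\G^-$ being identical, replacing $\s$ by $\s^-$ and $\G$ by $\G^-$). Recall that $\C^2=\I{\ko_\s^+}\sqcup\partial\ko_\s^+\sqcup\uo_\s^+$, and that on $\I{\ko_\s^+}$ every $G_k^+$ vanishes, so there convergence is trivial and uniform.

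First I would collect the two pieces already in hand. By \emph{Step 2} of the proof of Theorem \ref{t:result 1}, $\{G_k^+\}$ converges uniformly to $G_\G^+$ on compact subsets of $\uo_\s^+$; and, as just observed, it converges uniformly (indeed is eventually zero on the interior, modulo the trivial fact that $G^b_k$ need not be exactly zero there — but by \textit{Step 3} it tends to zero uniformly on compacts of $\C^2$, and $G^u_k\equiv 0$ on $\ko_\s^+$) on compact subsets of $\ko_\s^+$. So the only real work is to handle a compact set $C$ that straddles $\partial\ko_\s^+$; equivalently, to show that for every $\ep>0$ there is $N$ with $|G_k^+(z)-G_\G^+(z)|<\ep$ for all $k\ge N$ and all $z$ in a fixed neighbourhood of $\partial\ko_\s^+$ intersected with $C$.

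Here is the mechanism I expect to use, which is exactly the quantitative content of \textit{Step 6}. Write $G_k^+=G^b_k+G^u_k$ as in \eqref{e:bdd unbdd}. On a compact $C\subset\C^2$, \textit{Step 3} gives $G^b_k\le (1/2)^k\log R$ uniformly for $k\ge k_C$, so $G^b_k\to 0$ uniformly on $C$ and contributes nothing. It remains to control $G^u_k$ uniformly near $\partial\ko_\s^+$. The key point proved inside \textit{Step 6} is: if $z_0\in\partial\ko_\s^+$ then for the least $k'$ with $\G^u_{k'}(z_0)\ne\emptyset$ one necessarily has $k'$ \emph{large} (otherwise iterating $\h_1$ pushes $z_0$ into $\overline{V_{R_l}^+}$ along the filtration sequence $\{R_l\}$ of Lemma \ref{l:filtration}, contradicting $z_0\in\partial\uo_\s^+$); combined with the telescoping estimate \eqref{e:z0 relation} from \textit{Step 4} this forces $G_\G^+(z_0)=0$ and, more importantly, gives for points $z$ near $z_0$ a bound of the shape
\[
0\le G^u_k(z)\le \widetilde M\sum_{i\ge k_z}\Big(\tfrac{n_0}{D}\Big)^{i},
\]
where $k_z$ is the first exponent with $\G^u_{k_z}(z)\ne\emptyset$. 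Since this bound is \emph{uniform} once $k_z$ is large, and since $k_z\to\infty$ as $z\to\partial\ko_\s^+$ (again by the Lemma \ref{l:filtration} filtration argument: a uniform lower bound $k_z\le k_0$ on a sequence $z_n\to z_0\in\partial\ko_\s^+$ would, after passing to a subsequence with $\G^u_{k_0}(z_n)$ constant, yield $h_l(z_0)\in\overline{V_{R_l}^+}$ for all $l$, impossible), a standard $\ep$-argument finishes it: given $\ep>0$ pick $k_0$ with $\widetilde M\sum_{i\ge k_0}(n_0/D)^i<\ep$, then a neighbourhood $W$ of $\partial\ko_\s^+$ on which $k_z>k_0$ for all $z\in W$; on $W$ both $G^u_k(z)$ and its limit $G_\G^+(z)$ are $<\ep$ for all large $k$, so $|G_k^+-G_\G^+|<2\ep+o(1)$ there. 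Covering the compact $C$ by (i) a neighbourhood of $\partial\ko_\s^+$ as above, (ii) a compact subset of $\uo_\s^+$, and (iii) a compact subset of $\I{\ko_\s^+}$, and using the three estimates, yields uniform convergence on $C$. The main obstacle is precisely establishing that $k_z\to\infty$ as $z\to\partial\ko_\s^+$ with enough uniformity to make the neighbourhood $W$ work; but this is a finite-cardinality pigeonhole on the subsets of $\G_{k_0}$ together with the expanding filtration radii $R_l\to\infty$ of Lemma \ref{l:filtration}, exactly as carried out in \textit{Step 6}, so the lemma follows.
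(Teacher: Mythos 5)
Your argument is correct, and its skeleton is the same as the paper's: split a compact $C$ according to whether it lies in $\ko_\s^+$, in $\uo_\s^+$, or straddles $\partial\ko_\s^+$, kill $G^b_k$ uniformly via Lemma \ref{l:step 4}, and stratify the remaining points by the first time $k_z$ at which some element of $\G_{k_z}$ sends them into $\I{V_R^+}$ --- your level sets $\{k_z=k\}$ are exactly the paper's shells $C_k=C\cap(\uo_k\setminus\uo_{k-1})$, and your tail bound $G^u_k(z)\le \widetilde M\sum_{i\ge k_z}(n_0/D)^i$ is the paper's estimate $G^+_l(z)\le \widetilde M/2^{k-1}$ on $C_k$. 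The one genuine difference is how the \emph{limit} is controlled near $\partial\ko_\s^+$: the paper invokes the continuity of $G_\G^+$ (already proved in Theorem \ref{t:result 1}) to find a neighbourhood $W$ of $C\cap\partial\ko_\s^+$ with $|G_\G^+|<\ep/2$ and then pushes the far shells into $W$, and it even remarks afterwards that continuity is used crucially; you instead bound $G_\G^+=\lim_k G^u_k$ by the same geometric tail, so continuity is never needed, only the fact that $k_z\to\infty$ uniformly as $z$ approaches $C\cap\partial\ko_\s^+$. That fact is correct and your pigeonhole-plus-filtration justification (the Step 6 argument) works; in fact it follows even more cheaply from the nesting $\ov{\uo_{k_0}}\subset\uo_{k_0+1}\subset\uo_\s^+$ of Proposition \ref{p:K+ closed}, since $\{z:k_z\le k_0\}=\uo_{k_0}$, so $\C^2\setminus\ov{\uo_{k_0}}$ is already the desired neighbourhood of $\ko_\s^+$. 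Two small points to make explicit when writing this up: the telescoping inequality of \textit{Step 4} starting at the index $k_z$, and the bound $\log^+\|h(z)\|\le\log B$ for $h\in\G^u_{k_z}(z)$, both require that elements of $\G^b_{k_z-1}(z)$ already map $z$ into $V_R$, so you should take $k_0\ge k_C+1$ with $k_C$ the uniform index of Lemma \ref{l:step 4} for (a slight thickening of) $C$; and the neighbourhood on which $k_z>k_0$ should be taken around the compact set $C\cap\partial\ko_\s^+$ (or around all of $\ko_\s^+$ via the nesting above), after which your three-piece covering of $C$ closes the argument exactly as in the paper.
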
 
\begin{proof}
As before, we only prove the convergence of $\{G_k^+\}$ to $G_\G^+$ by generalising the proof of Lemma 8.3.4 from \cite{UedaBook}. The convergence of $\{G_k^-\}$ will follow likewise. Let $C$ be any compact subset of $\C^2$, then
\begin{itemize}[leftmargin=14pt]
	\item If $C$ is contained in $\ko_\s^+$, $G_k^+(z)=G_k^b(z)$ (as defined in the proof of Theorem \ref{t:result 1}). Hence by Remark \ref{r:step 4}, $G_k^+ \to G_\G^+$ uniformly on $C$;
	
	\smallskip
	\item If $C$ is contained in $\uo_\s^+$, from the proof of \textit{Step 5} of Theorem \ref{t:result 1}, it follows that $G_k^+ \to G_\G^+$ uniformly on $C$.
\end{itemize}
So we assume $C \cap \uo_\s^+ \neq \emptyset$ and $C \cap\ko_\s^+ \neq \emptyset$ and let $z \in C.$ By the above facts, for a given $\ep>0$ there exists $k_1 \ge 1$, such that $|G_k^b(z)|<\ep$ and $\|h(z)\|< R$ for every $h \in \G^b_k(z)$ whenever $k \ge k_1.$ In particular, for $z \in C \cap \ko_\s^+$ and $k \ge k_1$
$|G_k^+(z)-G^+_\G(z)|< {\ep}.$ Recall the sets $\{\uo_k\}$ from Proposition \ref{p:K+ closed} defined as
\[\uo_k=\bigcup_{h \in \G_k} h^{-1}(U_0)\] for every $k\ge 1$. Also from (\ref{e:U-inclusion}), we have $\ov{\uo_k} \subset \uo_{k+1} \subset \ov{\uo_{k+1}}$ and $\uo_\s^+=\cup_{k=1}^\infty\uo_k$. Let 
\[ C_k:=C \cap ({\uo_{k}}\setminus {\uo_{k-1}})\]
for every $k \ge 1$. Since $C \cap \uo_\s^+\neq \emptyset$ and $\ov{\uo_k} \subset \uo_{k+1}$, $C_k$'s are non-empty sets for $k \ge 1$, sufficiently large.  Also $\cup_{k=1}^\infty C_k=C \cap \uo_\s^+$. Now for $k >k_1$ and $z \in C_k$, $h(z) \in V_R \cup V_R^-$ whenever $h \in \G_{p}$, $1 \le p \le k-1$, i.e., $\G_{p}^b(z)=\G_p$ for all $z \in C_k.$ Thus from Lemma \ref{l:step 4} and Remark \ref{r:step 4}, for $k$ large enough, $h(z) \in V_R$ whenever $z \in C_k$ and $h \in \G_{k-1}.$ Let
\[B=\sup\{\|\h_i(z)\|: z \in \overline{V_{R+1}}, 1 \le i \le n_0\}.\]
So for $h \in \G_k$, $\|h(z)\|<B$ whenever $z \in C_k$, $k$ sufficiently large. Let $l > k$ and $z \in C_k$, then
\[ G_l^+(z)=\frac{1}{D^{k}} \sum_{h \in \G_k} G_{l-k}^+(h(z)) \le \frac{n_0^k}{D^k} (\log B+\log M) \le \frac{\widetilde{M}}{2^{k-1}}.\]
where $\widetilde{M}=\max\{|\log M|,|\log B|, |\log m|\}$ and $m,M$ is as obtained in Remark \ref{r:filtration estimate}. Now by continuity of $G_\G^+$ for $\ep>0$ there exists a neighbourhood $W$ of $C \cap \partial \ko_\s^+$ such that 
$|G_\G^+(z)| < \ep/2$ for $z \in W$. Further choose $k_2 \ge k_1$ large enough, such that $\frac{\widetilde{M}}{2^{k}} < \ep/4 $ for every $k \ge k_2$ and 
\[ \widetilde{C}_{k_2}:=\bigcup_{k=k_2}^\infty C_k \subset W.\] 
Then for every $z \in \widetilde{C}_{k_2}$,
$|G_k^+(z)-G_\G^+(z)|< \ep$
whenever $k \ge k_2.$ Note that
\[(C \cap \uo_\s^+) \setminus \widetilde{C}_{k_2}\subset C \cap \ov{\uo_{k_2-1}}\] and  $C \cap \ov{\uo_{k_2-1}}$ is a compact set contained in $\uo_\s^+.$ Hence there $k_3 \ge 1$ such that for every $z \in (C \cap \uo_\s^+) \setminus \widetilde{C}_{k_2}$,
$ |G_k^+(z)-G_\G^+(z)| \le \ep. $
Thus $|G^+_k-G_\G^+|<\ep$ on $C$ for $k \ge \max\{k_1,k_2,k_3\}$.
\end{proof}
\begin{rem} Note that in the proof of Theorem \ref{t:result 1}, we use the pointwise convergence of $G_k^\pm$ to prove $G_\G^+$ is a continuous function. However, proof of Lemma \ref{l:uniform convergence} uses the continuity of $G_\G^+$ crucially, to establish the convergence is uniform on compact subsets of $\C^2$. 	
\end{rem}
Now we complete 
\begin{proof}[Proof of Theorem \ref{t:convergence of mu}] We will show that for a given compact set $C \subset \C^2$ and an $\ep>0$ there exists $k_C$ such that  $|\g_k-G_k|_C<\ep/2$ for every $k \ge  \ku_C$, and use Lemma \ref{l:uniform convergence}. 

\smallskip
As before for the compact subset $C$ by Lemma \ref{l:step 4} and Remark \ref{r:step 4},  there exists $k_1 \ge 1$ such that $\|h(z)\|<R$ whenever $h \in \G^b_k(z)$ for every $z \in C$ and $k \ge k_1.$ Now choose $k_2$ such that $\frac{\widetilde{M}}{2^{k-2}}< \ep$ for $k \ge k_2$ where $\widetilde{M}$ as obtained in Lemma \ref{l:uniform convergence}. Thus, for every $z \in C$ with $h \in \G^b_k(z)$ where $k \ge  \ku_C=\max\{k_1,k_2\}$
\begin{align}\label{e:4.1}
	\frac{\log^+\|h(z)\|}{d_h}<\frac{\widetilde{M}}{2^{k}}< \frac{\ep}{2}.
\end{align}
\textit{Step 1:} For $k \ge \ku_C$, $ {G_h^+(z)}< \frac{\ep}{2}$ for every $z \in C$ and $h \in \G_k^b(z)$, i.e.,
$\displaystyle \bigg|\frac{\log^+\|h(z)\|}{d_h}-G_h^+(z)\bigg|< \ep.$

\medskip
If $z \in K_h^+\cap C$ then $G_h^+(z)=0$. So suppose $z \notin K_h^+\cap C$, i.e., $h(z) \in V_R$. Let $h=h_k \circ \cdots \circ h_1(z)$, where $h_i \in \G$ for every $1 \le i \le k.$ Further let $1 \le l \le k$ and $n_z \ge 1$, the minimum positive integers, such that \[w_z:=h_l \circ \hdots \circ h_1 \circ h^{n_z}(z) \in V_{R}^+.\] In particular, $h_{j} \circ \hdots\circ h_1 \circ h^{n}(z) \in V_R$, whenever $1 \le n \le n_z$ and $1 \le j \le l-1$. Then $\norm{w_z}=\norm{h_l \circ \hdots\circ h_1 \circ h^{n_z}(z)}<B$, where $B$ is as chosen in Lemma \ref{l:uniform convergence}. If $1 \le l<k$, then from (\ref{e:VR+})
\[m\norm{w_z}^{d_{l+1}}\le \norm{h_{l+1}(w_z)} \le M\norm{w_z}^{d_{l+1}}\]
where $d_{l+1}$ is the degree of $h_{l+1}$. As $M>1$, we will consider a more robust bound to the above inequality, i.e., $M\norm{w_z}^{d_{l+1}} < \norm{Mw_z}^{d_l+1}$ and obtain the following
	\[ \log^+{\norm{h_{i+l}\circ \cdots \circ h_{l+1}(w_z)}} < d_{l+i}\hdots d_{l+1}(\log B+\log M).\] 
for $1 \le i \le k-l.$ By continuing to repeat the same argument, we get that for every $j \ge 1$ 
	\[ \log^+{\norm{h^{j+n_z}(z)}} < d_h^{j}.d_{k}\hdots d_{l+1}(\log B+\log M) <d_h^{j}(\log B+\log M),\]
where $d_h=d_k\hdots d_1$ is the degree of $h$. Note that if $l=k$, then the final bound on the above inequality is anyway true. Since $d_h\ge 2^k$, for every $j>1$, 
\[ \frac{\log^+{\norm{h^{j+n_z}(z)}}}{d_h^{j+n_z}} \le \frac{\log B+\log M}{d_h^{n_z}}\le \frac{\log B+\log M}{d_h} \le \frac{\widetilde{M}}{2^{k-1}}< \frac{\ep}{2}, \]
and thus \textit{Step 1} follows by taking the limit of $j \to \infty$ and (\ref{e:4.1}).

\smallskip
\noindent\textit{Step 2: }For $h \in \G^u_k(z)$ and $l \ge 2$
\begin{align}\label{e:4.2} 
	\frac{\log\|h(z)\|}{d_h} -\widetilde{M}\sum_{i=1}^{l-1}  \frac{1}{d_h^i} \le \frac{\log\|h^l(z)\|}{d_h^l} \le \frac{\log\|h(z)\|}{d_h} +\widetilde{M}\sum_{i=1}^{l-1}  \frac{1}{d_h^i}.
\end{align}
We will prove the above by induction. Let $l=2$ then by (\ref{e:VR+})
\begin{align}\label{e:4VR+}
	{{m}^{1+\sum_{i=2}^k d_h(i)}}\norm{h(z)} \le \norm{h^2(z)} \le {M}^{1+\sum_{i=2}^k d_h(i)}\norm{h(z)},
\end{align}
where $d(i)=d_i\cdots d_k$, $2 \le i \le k.$ Now, applying logarithm and dividing by $d_h^2$ to the right inequality of the identity (\ref{e:4VR+}), it follows that
\[\frac{\log\norm{h^2(z)}}{d_h^2} \le\frac{  \widetilde{M}}{d_h}\bigg(\sum_{i=1}^k \frac{1}{d_1\cdots d_i}\bigg)+\frac{\log\norm{h(z)}}{d_h} \le \frac{  \widetilde{M}}{d_h}\bigg(\sum_{i=1}^k \frac{1}{2^i}\bigg)+\frac{\log\norm{h(z)}}{d_h} \le \frac{  \widetilde{M}}{d_h}+\frac{\log\norm{h(z)}}{d_h},\]
as $\widetilde{M}>\max\{|\log m|,|\log M|\}$. A similar argument applied to the left inequality of (\ref{e:4VR+}) along with the above observation gives
\[-\frac{  \widetilde{M}}{d_h}+\frac{\log\norm{h(z)}}{d_h} \le \frac{\log\norm{h^2(z)}}{d_h^2} \le \frac{  \widetilde{M}}{d_h}+\frac{\log\norm{h(z)}}{d_h},\]
which proves (\ref{e:4.2}) for $l=2.$ Now, assume (\ref{e:4.2}) is true for some $l \ge 2$, by above
\[-\frac{  \widetilde{M}}{d_h}+\frac{\log\norm{h^{l}(z)}}{d_h} \le \frac{\log\norm{h^{l+1}(z)}}{d_h^2} \le \frac{  \widetilde{M}}{d_h}+\frac{\log\norm{h^l(z)}}{d_h}.\]
Hence dividing further by $d_h^{l-1}$ and substituting the assumption gives
{\small \[\frac{\log\|h(z)\|}{d_h} -\widetilde{M}\sum_{i=1}^{l}  \frac{1}{d_h^i}\le \frac{\log\norm{h^{l}(z)}}{d_h^l}-\frac{  \widetilde{M}}{d_h^l} \le \frac{\log\norm{h^{l+1}(z)}}{d_h^{l+1}} \le \frac{ \widetilde{M}}{d_h^l}+\frac{\log\norm{h^l(z)}}{d_h^l}\le\frac{\log\|h(z)\|}{d_h} +\widetilde{M}\sum_{i=1}^{l}  \frac{1}{d_h^i},\]
}
which proves the induction hypothesis and hence the \textit{Step 2}. 

\smallskip\no Thus, by taking limit $l \to\infty$ on the identity (\ref{e:4.2}), we have
\[\bigg|G_h^+(z)-\frac{\log\|h(z)\|}{d_h}\bigg|\le \frac{2\widetilde{M}}{d_h}\le  \frac{\widetilde{M}}{2^{k-1}}<\ep.\]
Hence for $z \in C$ and $k \ge \ku_C$
\[\bigg| \g_k^+(z)-G_k^+(z)\bigg| \le \sum_{h \in \G_k} \frac{d_h}{D^k}\bigg|G_h^+(z)-\frac{\log\|h(z)\|}{d_h}\bigg|<\ep. \qedhere\]
\end{proof}
\begin{cor}\label{c:inclusion-2}
	Support of $\mu_\G^\pm$ is contained in the cumulative Julia sets $\J_\s^\pm.$
\end{cor}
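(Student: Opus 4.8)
The plan is to read this off Theorem~\ref{t:convergence of mu}, together with the classical fact that for a single H\'enon map $h$ the Green's function $G_h^+$ is pluriharmonic on $\C^2\sm J_h^+$: it vanishes identically on $\I{K_h^+}$ and is pluriharmonic on the escaping set of $h$, hence is pluriharmonic off $J_h^+=\partial K_h^+$. Consequently $\mu_h^+=\tfrac1{2\pi}dd^c G_h^+$ is a positive closed $(1,1)$-current with $\mathrm{Supp}(\mu_h^+)\subset J_h^+$, and symmetrically $\mathrm{Supp}(\mu_h^-)\subset J_h^-$.

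First I would pass from potentials to currents. By Theorem~\ref{t:convergence of mu} the plurisubharmonic functions $\g_k^\pm$ converge to $G_\G^\pm$ uniformly on compact subsets of $\C^2$, so $dd^c\g_k^\pm\to dd^c G_\G^\pm$ in the sense of currents and hence $\tfrac1{2\pi}dd^c\g_k^\pm\to\mu_\G^\pm$ weakly. Next, using the single-map invariance $G_h^+\circ h=d_h\,G_h^+$ (so that $\g_k^+=\tfrac1{D^k}\sum_{h\in\G_k}d_h\,G_h^+$), I would write
\[
 \tfrac1{2\pi}\,dd^c\g_k^+=\frac1{D^k}\sum_{h\in\G_k}d_h\,\mu_h^+ ,
\]
a positive closed current supported on $\bigcup_{h\in\G_k}J_h^+$. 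Since $\G_k\subset\s$, this support lies in $\bigcup_{h\in\s}J_h^+$, hence in the closed set $\J_\s^+$. Finally I would invoke the elementary fact that a weak limit of currents all supported in a fixed closed set $F$ is itself supported in $F$: if $\varphi$ is a smooth test form with compact support in the open set $\C^2\sm\J_\s^+$, then $\langle\tfrac1{2\pi}dd^c\g_k^+,\varphi\rangle=0$ for every $k$, so $\langle\mu_\G^+,\varphi\rangle=0$, whence $\mathrm{Supp}(\mu_\G^+)\subset\J_\s^+$. Running the same argument for the semigroup $\s^-$ with generating set $\G^-$ yields $\mathrm{Supp}(\mu_\G^-)\subset\J_\s^-$.

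There is essentially no obstacle here: the corollary is a repackaging of Theorem~\ref{t:convergence of mu}. The only routine points to verify are that uniform convergence of the plurisubharmonic potentials forces weak convergence of the associated $dd^c$-currents (immediate by duality against test forms) and that $\J_\s^\pm$ are closed (true by construction, being closures). Note also that this argument gives only the inclusion $\mathrm{Supp}(\mu_\G^\pm)\subset\J_\s^\pm$; the reverse inclusion, and the full identification $\mathrm{Supp}(\mu_\G^\pm)=\Ju_\s^\pm$ of Theorem~\ref{t:result 3}, will need the further work carried out in the rest of Section~\ref{s:4}.
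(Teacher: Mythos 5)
Your proposal is correct and follows essentially the same route as the paper: write $\tfrac{1}{2\pi}dd^c\g_k^+=\tfrac{1}{D^k}\sum_{h\in\G_k}d_h\mu_h^+$, note each summand is supported in $J_h^+\subset\J_\s^+$, and pass to the limit using Theorem~\ref{t:convergence of mu} and weak convergence of $dd^c$ against test forms supported off $\J_\s^+$. The only cosmetic difference is that the paper cites Lemma 3.6 of \cite{BS2} for $\mathrm{supp}(\mu_k^\pm)=\bigcup_{h\in\G_k}J_h^\pm$, whereas you derive the (sufficient) inclusion from the standard single-map properties of $G_h^\pm$.
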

\begin{proof}
	Let $\mu_k^\pm=\frac{1}{2\pi} dd^c \g_k^\pm$ then from Lemma 3.6 of \cite{BS2}, it follows that 
	\[\text{ supp }(\mu_k^\pm)= \bigcup_{h \in \G_k} J_h^\pm.\]
Let $S$ be any positive $(1,1)-$form supported in the complement of $\J_\s^+$ then $\mu_k^+ (S)=0$ for every $k \ge 1.$ By Theorem \ref{t:convergence of mu} and Corollary 3.6 of \cite{DemaillyBook}, $\mu_k^+ \to \mu^+_\G$, in the sense of currents, i.e., $\mu^+_\G(S)=0.$ Hence the proof. A similar argument works for $\mu_\G^-.$
\end{proof}
Finally, we are ready to complete
\begin{proof}[Proof of Theorem \ref{t:result 3}] Note that by Corollary \ref{c:inclusion-2}, $dd^c(G_\G^+)=0$ everywhere in the complement of $\J_\s^+.$ Choose any ball ${\mathbb{B}}$ contained in  $\C^2 \setminus \J_\s^+.$ As $G_\G^+$ is continuous on $\overline{\mathbb{B}}$, by uniqueness of solution to the Dirichlet problem it follows that $G_\G^+$ is pluriharmonic on $\mathbb{B}$ and $\C^2 \setminus \J_\s^+.$ 
 
 \smallskip Now suppose $\Ju_\s^+ \setminus \text{supp }(\mu_\G^+) \neq \emptyset$ and $z_0 \in \Ju_\s^+ \setminus \text{supp }(\mu_\G^+)$. Then there exists $r>0$, such that the ball of radius $r$ at $z_0$, $B(z_0;r) \subset \big(\text{supp }(\mu_\G^+)\big)^c.$ Let $0<r'<r$. Since $z_0 \in \Ju_\s^+$, there exists a sequence $\{h_n\} \subset \s$ that is neither locally uniformly bounded nor uniformly divergent to infinity on $B(z_0;r').$ In particular, there exist sequences of points $\{z_n\}$ and $\{w_n\}$  in $B(z_0;r')$ such that 
 \[ \|h_n(z_n)\|  \text{ is bounded and } \|h_n(w_n)\| \to \infty\]
 as $n \to \infty.$ Note that without loss of generality we may assume, the length of $h_n \to \infty$ as $n \to \infty$. Now again, by Lemma \ref{l:step 4} and Remark \ref{r:step 4} the above may be modified further as\,--\,for $n$ sufficiently large,
 \[h_n(z_n) \in V_R \text{ and } h_n(w_n) \in V_{r_n}^+,\]
 where $r_n$ is a sequence of positive real numbers that diverges to infinity as $n \to \infty.$ Hence
 \begin{align}\label{e:t3_1}
 G_\G^+\circ h_n(z_n)< C_0 \text{ and } G_\G^+\circ h_n(w_n) \to \infty.
\end{align}
 Also as $G_\G^+$ is pluriharmonic on $B(z_0;r)$ and plurisubharmonic on $\C^2$, by Corollary \ref{c:result 2} we have $G^+_\G \circ h$ is pluriharmonic on $B(z_0;r)$ for every $h \in \s.$ Now by Harnack's inequality (See Theorem 2.5, \cite[Page 16]{TrudingerBook}), there exists $A>0$, a positive constant dependent on $z_0$, $r$ and $r'$, such that for every harmonic function $u$ on $B(z_0;r)$
\[ \sup_{B(z_0;r')} u(z) \le A \inf_{B(z_0;r')} u(z).\] 
Hence $0\le G_\G^+ \circ h_n(w_n) \le AC_0$ which contradicts (\ref{e:t3_1}). Hence $\text{supp }(\mu_\G^+)=\J_\s^+=\Ju_\s^+$.

\smallskip
Further $\mu_\G^+$ is a current of mass 1, follows from Theorem \ref{t:convergence of mu} and Corollary 3.6 in \cite{DemaillyBook}. Similarly by analysing $G_\G^-$ and $\mu_\G^-$ as above, we have $\text{supp }(\mu_\G^-)=\Ju_\s^-$.
\end{proof}
\begin{rem}
Thus by Proposition 3.2 of \cite{DemaillyBook}, the measure $\mu_\G:=\mu_\G^+ \wedge \mu_\G^-$ is a probability measure compactly supported on the intersection of the positive and negative Julia sets.
\end{rem}

\begin{cor}\label{c:escaping set}
	The Fatou component at infinity of the semigroup $\s$ and $\s^-$, i.e., 
	\begin{align}\label{e:infinity component} 
		\U_\s^\pm=\text{int }\Big(\bigcap_{h \in \s}U_h^\pm\Big).
	\end{align} 
\end{cor}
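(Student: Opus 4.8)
The plan is to reduce the identity $\U_\s^\pm=\I{\bigcap_{h\in\s}U_h^\pm}$ to a purely set-theoretic equality between the weak filled Julia set and the closure of the union of the individual filled Julia sets, and then to settle the nontrivial half by a K\"onig's lemma plus approximation argument. Fix $R\ge R_\s$. For a single H\'enon map $h\in\s$ one has $U_h^+=\bigcup_{n\ge 1}h^{-n}(V_R^+)$, because under $h$ the forward orbit of any point either stays in $V_R$ or eventually enters $V_R^+$ (the filtration for $h$). Since $\{h^n:h\in\s,\ n\ge 1\}=\s$, taking the union over $h\in\s$ and comparing with Corollary \ref{c:U+ and U-}(2) gives $\uo_\s^+=\bigcup_{h\in\s}U_h^+$; hence $\bigcap_{h\in\s}U_h^+=\C^2\setminus\bigcup_{h\in\s}K_h^+$ and therefore $\I{\bigcap_{h\in\s}U_h^+}=\C^2\setminus\overline{\bigcup_{h\in\s}K_h^+}$. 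Because $\U_\s^+=\C^2\setminus\K_\s^+$, the assertion is equivalent to
\[\K_\s^+=\overline{\bigcup_{h\in\s}K_h^+},\]
and the statement for $\s^-$ follows by replacing $V_R^+$ and $h$ by $V_R^-$ and $h^{-1}$ throughout.

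The inclusion $\overline{\bigcup_{h\in\s}K_h^+}\subseteq\K_\s^+$ is the routine half: if $w\in K_h^+$ with $h\in\G_m$, then $\{h^n(w)\}$ is bounded and $h^n\in\G_{mn}$ with $mn\to\infty$, so $w\in\K_\s^+$, and since $\K_\s^+$ is closed by Proposition \ref{p:K+ closed} the whole closure is contained in it. (Equivalently one gets $\U_\s^+\subseteq\I{\bigcap_{h\in\s}U_h^+}$ directly from Corollary \ref{c:U+ and U-}(1): if $h(z)\in V_R^+$ for every $h\in\G_k$ and $g\in\G_m$, then splitting off the first-applied $k$ letters of $g^{\lceil k/m\rceil}$ and using that a composition of generators maps $V_R^+$ into itself shows $g^{\lceil k/m\rceil}(z)\in V_R^+$, so $z\in U_g^+$; and $\U_\s^+$ is open.)

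The substance is the reverse inclusion $\K_\s^+\subseteq\overline{\bigcup_{h\in\s}K_h^+}$, and I expect the final approximation step there to be the main obstacle. Let $z\in\K_\s^+$; if $z$ already lies in some $K_h^+$ we are done, so assume not, and pick $g_k\in\G_{n_k}$ with $n_k\to\infty$ and $\{g_k(z)\}$ bounded. Fix $m\ge k_{\{z\}}$; writing $g_k=a_k\circ b_k$ with $b_k\in\G_m$ and $a_k\in\G_{n_k-m}$, the degree of $a_k$ is at least $2^{n_k-m}\to\infty$, so the expansion estimate of Remark \ref{r:filtration estimate} excludes $b_k(z)\in\I{V_R^+}$ once $n_k-m$ is large, and then Lemma \ref{l:step 4} forces $b_k(z)\in V_R$. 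Thus for every $m\ge k_{\{z\}}$ there is a length-$m$ coding whose associated map carries $z$ into $V_R$, and this family of codings is closed under passing to an initial (first-applied) segment, since by (\ref{e:VR+}) a nonempty composition of generators maps $\I{V_R^+}$ into itself. K\"onig's lemma then produces an infinite coding $(a_1,a_2,\dots)$ with $\Psi_k(z)\in V_R$ for all large $k$, where $\Psi_k:=\h_{a_k}\circ\cdots\circ\h_{a_1}$; that is, $z$ has a bounded orbit under the non-autonomous sequence $\{\Psi_k\}$. Truncating the coding at length $N$ and repeating it periodically gives $\Theta_N:=\h_{a_N}\circ\cdots\circ\h_{a_1}\in\G_N$, and the remaining, genuinely nontrivial, step is to show that $z$ is a limit as $N\to\infty$ of points of $K_{\Theta_N}^+\subseteq\bigcup_{h\in\s}K_h^+$: this density of eventually periodic codings in the weak filled Julia set I would obtain from a normal-families argument built on the nested filtration $\{V_{R_k}\}$ of Lemma \ref{l:filtration} together with the compactness of the sets $\Theta_N^{-1}(V_R)$. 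Granting it, $z\in\overline{\bigcup_{h\in\s}K_h^+}$, which completes the equality; the case of $\s^-$ is identical after replacing every object by its negative counterpart.
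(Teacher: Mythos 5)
Your reduction is correct and cleanly done: $\U_\s^\pm=\C^2\setminus\K_\s^\pm$, the identity $\bigcup_{h\in\s}U_h^\pm=\uo_\s^\pm$, and hence the equivalence of the corollary with $\K_\s^\pm=\overline{\bigcup_{h\in\s}K_h^\pm}$ all hold, the easy inclusion is fine, and the K\"onig's-lemma step (prefixes of long bounded words land in $V_R$ by Remark \ref{r:filtration estimate} and Lemma \ref{l:step 4}, prefix-closedness via (\ref{e:VR+})) correctly produces an infinite coding with $\Psi_k(z)\in V_R$ for all large $k$, i.e.\ $z\in\K^+_{\seq{h}}$ for a non-autonomous sequence in $\G$. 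But the proof is not complete: the step you yourself flag\,---\,that $z$ is a limit of points of $K^+_{\Theta_N}$ for the periodized truncations $\Theta_N$\,---\,is precisely the substance of the statement, and it is left as an unproven claim. Knowing $\Psi_j(z)\in V_R$ for $j\le N$ only places $z$ in $\Theta_N^{-1}(V_R)$ together with finitely many intermediate conditions; it gives no control whatsoever on $\Theta_N^2(z),\Theta_N^3(z),\dots$, since once an orbit re-enters $V_R$ it can be sent anywhere, and the compact set $\Theta_N^{-1}(V_R)$ is in general vastly larger than any neighbourhood of $K^+_{\Theta_N}$, so membership in it does not force proximity to $K^+_{\Theta_N}$. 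The ``density of eventually periodic codings in the weak filled Julia set'' is, by your own reduction, equivalent to the corollary being proved, so as written the argument is circular at its decisive point, and the sketched ``normal-families plus compactness of $\Theta_N^{-1}(V_R)$'' idea does not supply the missing content.

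For comparison, the paper does not attempt a combinatorial proof at all: it deduces the corollary from Theorem \ref{t:result 3} ($\Ju_\s^\pm=\overline{\bigcup_{h\in\s}J_h^\pm}$, equivalently $\F_\s^\pm=\I{\bigcap_{h\in\s}\F_h^\pm}$), identifying the component at infinity on both sides; and Theorem \ref{t:result 3} is itself analytic, resting on the dynamical Green's functions, the convergence in Theorem \ref{t:convergence of mu}, and a Harnack-inequality argument for the positive pluriharmonic functions $G_\G^+\circ h$ on a ball avoiding the cumulative Julia set. If you wish to complete your route, the natural input at the final step is of exactly that strength: e.g.\ assume $B(z;\epsilon)$ misses every $K_h^+$, so that each $G^+_{\Theta_N}$ is positive and pluriharmonic there, and combine Theorem \ref{t: NA sequential convergence} (uniform convergence of $G^+_{h(N)}$ to $\Gr^+_{\seq{h}}$, with $\Gr^+_{\seq{h}}(z)=0$) with a Harnack estimate\,---\,but then one is essentially re-running the paper's proof of Theorem \ref{t:result 3} rather than bypassing it, and even this still needs an extra argument to reach a contradiction. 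As it stands, the proposal establishes a correct and genuinely different reduction, but leaves the key analytic step unproved.
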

\begin{proof}
	Let $\mathcal{F}^+_h$ denote the Fatou set corresponding to a $h \in \s.$ Note that $\mathcal{F}^+_h=U_h^+ \cup \mathcal{F}^b_{h}$ where $U_h^+$ is the component at infinity and $\mathcal{F}^b_{h}$ are the Fatou components contained in $K_h^+.$ Similarly $\mathcal{F}^-_h=U_h^- \cup \mathcal{F}^b_{h^{-1}}$ where $U_h^-$ is the component at infinity of $h^{-1}$ whenever $h \in \s.$ By Theorem\ref{t:result 3}, it follows that
	\[\mathcal{F}_\s^\pm=\C^2 \setminus \J_\s^+=\text{int }\Big(\bigcap_{h \in \s}\mathcal{F}_h^\pm\Big)=\text{int }\Big(\bigcap_{h \in \s}(U_h^\pm \cup \mathcal{F}_{h^\pm}^b)\Big).\]
	Hence the components at infinity, corresponding to the dynamics of the semigroup $\s$ and $\s^-$ is given by (\ref{e:infinity component}).
\end{proof}
\begin{rem}\label{r:unique Green}
	Also note, if $\ko_\s^+=\K_\s^+$, then $K_{h_1}^+=K_{h_2}^+$ and by Theorem 5.4 from \cite{Lamy}, there exists $m, n \ge 1$ such that $h_1^{m}=h_2^{n}.$ Thus $\J_\s^\pm=J_h^\pm$ for every $h \in \s.$ In particular from Theorem \ref{t:result 4}, it follows that $G_\G^\pm=G_h^\pm$ for every $h \in \s$, i.e., the Green's function is unique.
\end{rem}
However, the next corollary proves that the positive and negative Green's functions obtained corresponding to the semigroup $\s$ is generally non-unique (i.e., whenever  $\ko_\s^+ \subsetneq \K_\s^+$), as a consequence of Corollary \ref{c:result 2} and Corollary \ref{c:Green constant}. 
\begin{cor}\label{c:non-uniqueness}
If $\ko_\s^+ \subsetneq \K_\s^+$ then the Green's functions $G_\G^\pm$ are non-unique and depends on the generating set $\G.$
\end{cor}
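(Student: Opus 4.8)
The plan is to produce, under the hypothesis $\ko_\s^+ \subsetneq \K_\s^+$, two distinct generating sets for the same semigroup $\s$ whose associated dynamical Green's functions disagree. First I would invoke Proposition \ref{p:minimal generator} to fix the unique minimal generating set $\G_0 = \{\h_i : 1 \le i \le n_0\}$ of $\s$, with total degree $D_{\G_0}$ and Green's function $G_{\G_0}^+$. To build a second generating set, I would adjoin to $\G_0$ a redundant element: pick any word $h^\star \in \G_0\ _k$ of some length $k \ge 2$, so that $h^\star$ is already in $\s$, and set $\G' = \G_0 \cup \{h^\star\}$. Clearly $\langle \G' \rangle = \s$, so $\G'$ is another generating set, with total degree $D_{\G'} = D_{\G_0} + d_{h^\star}$ where $d_{h^\star} = d_{i_1}\cdots d_{i_k} \ge 2^k$. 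The corollary then follows once I show $G_{\G'}^+ \ne G_{\G_0}^+$.

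The key computational step is to compare the two Green's functions on the filtration region $V_R^+$, using Corollary \ref{c:Green constant}: there we have $G_{\G_0}^+(x,y) = \log|y| + O(1)$ and $G_{\G'}^+(x,y) = \log|y| + O(1)$, so both behave like $\log|y|$ at infinity — hence looking at the leading asymptotics alone is not enough, and the distinction must be detected through the semi-invariance relations. Instead I would argue as follows: if $G_{\G_0}^+ = G_{\G'}^+ =: G$, then $G$ simultaneously satisfies, by Corollary \ref{c:result 2}, the two relations $\sum_{i=1}^{n_0} G \circ \h_i = D_{\G_0}\, G$ and $\sum_{i=1}^{n_0} G \circ \h_i + G \circ h^\star = D_{\G'}\, G$. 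Subtracting gives $G \circ h^\star = (D_{\G'} - D_{\G_0})\, G = d_{h^\star}\, G$. On the other hand, iterating the first relation $k$ times and bookkeeping which words of length $k$ equal $h^\star$ shows that the only way this can be consistent for \emph{all} $z$ is if $G$ is, in effect, the Green's function of the single H\'enon map $h^\star$ up to the scaling $d_{h^\star}$; combined with $G = G_{\G_0}^+$ supported on $\Ju_\s^+$ (Theorem \ref{t:result 3}) and with $\Ju_{h^\star}^+ = \partial K_{h^\star}^+$, this forces $\Ju_\s^+ = \Ju_{h^\star}^+$ for every choice of redundant word $h^\star$, hence all the $K_h^+$, $h\in\s$, coincide. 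By the argument already recorded in Remark \ref{r:unique Green} (via Theorem 5.4 of \cite{Lamy}), $K_{h_1}^+ = K_{h_2}^+$ for all $h_1,h_2\in\s$ forces $\ko_\s^+ = \K_\s^+$, contradicting the standing hypothesis $\ko_\s^+ \subsetneq \K_\s^+$. Therefore $G_{\G_0}^+ \ne G_{\G'}^+$, and symmetrically $G_{\G_0}^- \ne G_{\G'}^-$.

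I expect the main obstacle to be the middle step — extracting from the pair of semi-invariance identities the rigidity statement ``$G$ must be (a multiple of) $G_{h^\star}^+$''. The cleanest route is probably not to argue directly with $G \circ h^\star = d_{h^\star} G$ as a functional equation on all of $\C^2$, but to localize: on $\U_\s^+$ the function $G_{\G_0}^+$ is pluriharmonic (Theorem \ref{t:result 3}), positive (Corollary \ref{c:non-zero on uo+}), and behaves like $\log|y|$ near infinity on $V_R^+$; the relation $G \circ h^\star = d_{h^\star} G$ together with the filtration Lemma \ref{l:filtration} then pins down $G$ on $\U_\s^+$ as $\tfrac{1}{d_{h^\star}^n} G(h^{\star\, n}(\cdot))$, which is exactly the defining limit for $G_{h^\star}^+$; continuity and vanishing on $\ko_\s^+$ extend the equality $G = G_{h^\star}^+$ to all of $\C^2$. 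Once one has $G_{\G_0}^+ = G_{h^\star}^+$ for the specific redundant word, taking two different redundant words $h^\star_1, h^\star_2$ of different lengths (or with distinct filled Julia sets, which one can arrange precisely because $\ko_\s^+ \subsetneq \K_\s^+$ means $\s$ is not essentially a single map) yields $G_{h^\star_1}^+ = G_{h^\star_2}^+$, hence $K_{h^\star_1}^+ = K_{h^\star_2}^+$, and the Lamy rigidity gives the contradiction. The remaining verifications — that $\G'$ generates $\s$, that $D_{\G'} \ne D_{\G_0}$, and the bookkeeping of word counts — are routine.
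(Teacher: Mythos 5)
Your opening moves coincide with the paper's: fix the minimal generating set $\G_0$ (Proposition \ref{p:minimal generator}), adjoin a redundant word, and subtract the two semi-invariance relations of Corollary \ref{c:result 2} to get $G\circ h^{\star}=d_{h^{\star}}G$ for $G=G_{\G_0}^+$; your stronger intermediate claim $G=G_{h^{\star}}^+$ (via $G=\tfrac{1}{d_{h^{\star}}^{n}}G\circ h^{\star n}$, the growth estimate of Corollary \ref{c:Green constant} on escaping orbits, and $G\to 0$ along bounded orbits) is correct, although note that to run it ``for every redundant word'' you must take the contradiction hypothesis to be that the Green's function is independent of the generating set, not merely that the single pair $(\G_0,\G')$ agrees. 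The genuine gap is the final step. You invoke Remark \ref{r:unique Green} (via Lamy) for the implication ``$K_{h_1}^+=K_{h_2}^+$ for all $h_1,h_2\in\s$ forces $\ko_\s^+=\K_\s^+$'', but that remark and Lamy's theorem give exactly the opposite direction ($\ko_\s^+=\K_\s^+\Rightarrow$ all $K_h^+$ equal $\Rightarrow$ common iterates); the direction you need is not established anywhere and is precisely the nontrivial point. Likewise, your fallback ``choose two redundant words with distinct filled Julia sets, which one can arrange because $\ko_\s^+\subsetneq\K_\s^+$ means $\s$ is not essentially a single map'' presupposes the same unproven implication, and even if $K_{h_1^\star}^+=K_{h_2^\star}^+$ were obtained, Lamy only yields a common iterate, not a contradiction. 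The repair is short and uses what you already have: from $G\circ h=d_hG$ for all $h\in\s$, pick $z_0\in\K_\s^+\setminus\ko_\s^+$; there are words $w_k$ of length $n_k\to\infty$ with $\{w_k(z_0)\}$ bounded, so $d_{w_k}G(z_0)=G(w_k(z_0))$ is bounded while $d_{w_k}\ge 2^{n_k}$, forcing $G(z_0)=0$, whereas Corollary \ref{c:non-zero on uo+} gives $G(z_0)>0$ on $\uo_\s^+$. This is essentially the paper's own second half (the paper phrases it with two words of different degrees and the asymptotics of Corollary \ref{c:Green constant}), and it needs no rigidity input at all.

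A second gap is the closing ``and symmetrically $G_{\G_0}^-\ne G_{\G'}^-$.'' The hypothesis is one-sided: you are given $\ko_\s^+\subsetneq\K_\s^+$, not $\ko_\s^-\subsetneq\K_\s^-$, so a mirror-image argument for the negative Green's function has no contradiction to land on. The paper handles this asymmetry by a genuine extra step: if $G^-$ were independent of the generating set, the same mechanism gives $\ko_\s^-=\K_\s^-$, hence $K_{h_1}^-=K_{h_2}^-$ for all elements, hence (now using Lamy in the correct direction, as in Remark \ref{r:unique Green}) pairwise common iterates and equality of all Julia sets, which makes the positive Green's function unique as well and so forces $\ko_\s^+=\K_\s^+$, contradicting the hypothesis. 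You should add this bridge rather than claim symmetry.
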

\begin{proof}
Suppose not, i.e., let the positive Green's function be unique corresponding to semigroup $\s$. By Proposition \ref{p:minimal generator}, it follows that $\s$ admits a minimal generating set $\G_0.$ Let $\G_h=\G_0 \cup h$, $h \in \s \setminus \G_0.$ Then $\s=\langle \G_0 \rangle =\langle \G_h \rangle.$ Then by assumption, $G_{\G_0}^+=G_{\G_h}^+$ and thus from Corollary \ref{c:result 2}, we have that 
\[(D_{\G_0}+d_h)G_{\G_0}^+(z)=\sum_{\h_i \in \G_0}G_{\G_0}^+(\h_i(z))+G_{\G_0}^+(h(z))=D_{\G_0}G_{\G_0}^+(z)+G_{\G_0}^+(h(z))\] where $D_{\G_0}$ is the total degree of the generating set $\G_0$ and $d_h$ is the degree of $h \in \s.$ Hence $G_{\G_0}^+(z)=d_h G_{\G_0}^+(h(z)) $, i.e., $G_{\G_0}^+(z)=0$ if $z \in K_h^+.$ But from Corollary \ref{c:non-zero on uo+}, the above implies $K_h^+ \subset \ko_\s^+$ for every $h \in \s$. 
 Suppose $\K_\s^+\setminus \ko_\s^+ \neq \emptyset$ and $z_0 \in \K_\s^+\setminus \ko_\s^+$, then there exist sequence $\{h_n\}$ and $\{\tilde{h}_n\}$ in $S$ such that both the lengths of $h_n$ and $\tilde{h}_n$ goes to infinity as $n \to \infty$. Further by Lemma \ref{l:step 4} there exists $\ku_0 \ge 1$ such that for every $n \ge \ku_0$, $h_n(z_0) \in V_R^+ \text{ and } \tilde{h}_n(z_0) \in V_R.$ Then by Corollary \ref{c:Green constant} 
\[G_{\G_0}^+(\h_1^l \circ h_{\ku_0}(z_0))=\log|\pi_2 \circ \h_1^l \circ h_{\ku_0}(z_0)|+O(1),\]
for every $l \ge 1$. Hence $G_{\G_0}^+(\h_1^l \circ h_{\ku_0}(z_0)) \to \infty$ as $l \to \infty$. Fix $l_1 \ge 1$, sufficiently large such that $G_{\G_0}^+(\h_1^{l_1} \circ h_{\ku_0}(z_0))>\widetilde{B}$, where $\widetilde{B}=\max\{G_{\G_0}^+(z): z \in V_R\}.$ Let $\ku_1 \ge \ku_0$
\[\mathsf{h}_1=\h_1^{l_1} \circ h_{\ku_0} \text{ and } \mathsf{h}_2=\tilde{h}_{\ku_1}\]
such that the degree of $\mathsf{h}_2 =d_{\mathsf{h}_2}>d_{\mathsf{h}_1}=$ degree of $\mathsf{h}_1$. Since we have assumed that $G_{\G_0}^+$ is unique, it follows that
\[(d_{\mathsf{h}_2}-d_{\mathsf{h}_1})G_{\G_0}^+(z_0)=G_{\G_0}^+(\mathsf{h}_2(z_0))-G_{\G_0}^+(\mathsf{h}_1(z_0)),\]
i.e., $G_{\G_0}^+(z_0)<0$, which is a contradiction! Thus $\K_\s^+=\ko_\s^+$.

\smallskip Now, if the negative Green's function is unique, similar argument as above will imply $\ko_\s^-=\K_\s^-$. Hence $K_{h_1}^-=K_{h_2}^-$ for every $h_1,h_2 \in \s$. Now by Remark \ref{r:unique Green}, the positive Green's function will also be unique, i.e., $\ko_\s^+=\K_\s^+$, which is again a contradiction!
\end{proof}
\section{Equidistributed projective currents and proof of Corollary \ref{c:result 5}}\label{s:5}
Recall that every polynomial map $g:\mathbb{C}^2 \to \C^2$, i.e., $g(x,y)=(g_1(x,y),g_2(x,y))$ where $g_1$ and $g_2$ are polynomials in $x$ and $y$, extends as a rational map $\bar{g}$ on $\mathbb{P}^2.$ Further in the homogeneous coordinates of $\mathbb{P}^2$, it is defined as
\[\bar{g}[x:y:z]=\bigg[z^d g_1\Big(\frac{x}{z},\frac{y}{z}\Big):z^d g_2\Big(\frac{x}{z},\frac{y}{z}\Big): z^d\bigg]\] 
where $d=\max\{\,\text{degree of }g_1, \text{ degree of }g_2\,\}.$ Now for any map $h \in \s$, $h^{-1}$ is also a polynomial map. Hence both $h$ and $h^{-1}$ extends as rational maps on $\mathbb{P}^2$, in the homogeneous coordinates. Further the degree of $\pi_2 \circ h$ is strictly greater than $\pi_1 \circ h$ and
$\pi_2 \circ h(x,y)=y^{d_h}+\text{ l.o.t.}$ Hence the indeterminancy point of the rational map $\bar{h}$ in $\mathbb{P}^2$ (for every $h \in \s$) is $I^+=[1:0:0].$ A similar argument gives that the indeterminancy point of $\overline{h^{-1}}$ is $I^-=[0:1:0].$ Let $\overline{\s}$ and $\overline{\s^-}$ be the family of the rational maps on $\mathbb{P}^2$ defined as
\[\overline{\s}=\{\bar{h}: h \in \s\} \text{ and } \overline{\s^-}=\{\overline{h^{-1}}: h \in \s\}.\]
Next, we study the dynamics of the above families in $\mathbb{P}^2$ and generalise a few facts from \cite{Dinh-Sibony}. Note that the line at infinity, except the point $I^+$, i.e., $L_\infty^+=\{[x:y:z] \in \mathbb{P}^2: z=0\} \setminus I^+$  is contracted to $I^-$  by every $\bar{h} \in \overline{\s}.$ Similarly the line at infinity, except the point $I^-$, i.e., $L_\infty^-=\{[x:y:z] \in \mathbb{P}^2: z=0\} \setminus I^-$ is contracted to $I^+$ by every $\bar{h} \in \overline{\s^-}$. Also, $V_R^+$ (respectively $V_R^-$) lies in the basis of attraction of $I^-$ (respectively $I^+$) for every $\bar{h} \in \overline{\s}$ (respectively for every $\bar{g} \in \overline{\s^-}$). Hence $I^-$ is attracting fixed point for every $\bar{h} \in \overline{\s}$ and $I^+$ is attracting fixed point for every $\bar{g} \in \overline{\s^-}.$ Thus, we define the following sets.
\begin{itemize}
	\item $\displaystyle \widetilde{\U}_\s^+= \text{int}\Big(\bigcap_{h \in \s} \widetilde{U}_h^+\Big)$, where $\widetilde{U}_h^+$, where $\widetilde{U}_h^+$ is the basin of attraction of $I^-$ for $\bar{h}.$

	\item $\displaystyle \widetilde{\U}_\s^-= \text{int}\Big(\bigcap_{h \in \s} \widetilde{U}_h^-\Big)$, where $\widetilde{U}_h^-$ , where $\widetilde{U}_h^-$ is the basin of attraction of $I^+$ for $\overline{h^{-1}}.$
\end{itemize}
\begin{prop}\label{p:projective closures}
	The sets $\widetilde{\U}_\s^\pm \cap \C^2=\U_\s^\pm.$ Also, the closure of the sets $\K_\s^\pm$ in $\mathbb{P}^2$ is given by $\overline{\K_\s^\pm}=\K_\s^\pm \cup I^\pm.$
\end{prop}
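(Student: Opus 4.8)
The plan is to treat the two assertions separately, reducing each to a fact about a single H\'enon map and then assembling over $\s$. For $\widetilde{\U}_\s^\pm\cap\C^2=\U_\s^\pm$, I would first record the classical single-map identity $\widetilde U_h^+\cap\C^2=U_h^+$ (see \cite{HOv:I},\cite{BS2}): a point $z\in\C^2$ escapes under $h$ iff its forward orbit eventually enters $V_R^+$, and $V_R^+$ lies in the basin of $I^-$ for $\bar h$ (as recorded just above), so escaping is equivalent to $\bar h^n(z)\to I^-$, while conversely $\bar h^n(z)\to I^-\notin\C^2$ forces the affine orbit to leave every compact set. Since $\C^2$ is open in $\mathbb P^2$ and each $\widetilde U_h^+$ is open, passing to interiors commutes with intersecting with $\C^2$, so (with interiors taken in $\mathbb P^2$, resp.\ $\C^2$)
\[\widetilde{\U}_\s^+\cap\C^2=\mathrm{int}\Big(\bigcap_{h\in\s}\widetilde U_h^+\Big)\cap\C^2=\mathrm{int}\Big(\bigcap_{h\in\s}\big(\widetilde U_h^+\cap\C^2\big)\Big)=\mathrm{int}\Big(\bigcap_{h\in\s}U_h^+\Big)=\U_\s^+,\]
the last step being Corollary \ref{c:escaping set}; the case of $\widetilde{\U}_\s^-$ follows by applying this to $\s^-$.

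For the closure, I would use that $\K_\s^+$ is closed in $\C^2$ (Proposition \ref{p:K+ closed}), so that $\overline{\K_\s^+}\cap\C^2=\K_\s^+$ and only $\overline{\K_\s^+}\cap\{z=0\}$ remains to be determined. Since $\K_\s^+\subset V_R\cup V_R^-$ and $V_R$ is bounded, any sequence in $\K_\s^+$ tending to infinity eventually lies in $V_R^-$, where $|y|\le|x|$; in homogeneous coordinates $[x:y:1]=[1:y/x:1/x]$, so each of its limit points on the line at infinity is of the form $[1:t:0]$ with $|t|\le1$, and $I^+=[1:0:0]$ is the one with $t=0$. That $I^+$ is in fact a limit point I would deduce from $K_h^+\subset\K_\s^+$ for any fixed $h\in\s$ (if $h$ has length $\ell$ then $h^k\in\G_{k\ell}$, so the orbit $h,h^2,\dots$ witnesses membership in $\K_\s^+$), combined with the classical $\overline{K_h^+}\cap\{z=0\}=\{I^+\}$.

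It then remains to rule out $t\ne 0$. The key step---a sharpening of the filtration estimate \eqref{e:VR+}---is to show that, after enlarging $R$ once so as to dominate the filtration radii of the finitely many generalized H\'enon factors $H_j^i$, for each $c\in(0,1)$ there is $C_0>0$ with
\[\h_i\Big(V_R^-\cap\{\,|y|\ge c|x|,\ |x|\ge C_0\,\}\Big)\subset V_R^+\qquad(1\le i\le n_0).\]
Indeed, on such a region $|x|\asymp|y|=:\rho$ is large, so the innermost factor $H_{m_i}^i(x,y)=(y,p_{m_i}(y)-a_{m_i}x)$ already satisfies $|p_{m_i}(y)-a_{m_i}x|\ge\gamma|y|^{\deg p_{m_i}}-|a_{m_i}|\rho\gg\rho\ge|y|$ since $\deg p_{m_i}\ge2$, i.e.\ $H_{m_i}^i(x,y)\in V_R^+$, and the remaining factors $H_1^i,\dots,H_{m_i-1}^i$ preserve $V_R^+$. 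Granting this, suppose $[1:t:0]\in\overline{\K_\s^+}$ with $t\ne0$; picking $z_n=(x_n,y_n)\in\K_\s^+$ with $|x_n|\to\infty$ and $y_n/x_n\to t$, and fixing $n$ so large that $z_n\in V_R^-$ with $|x_n|\ge C_0$ and $|y_n|\ge c|x_n|$ (with $c:=|t|/2$), we get $\h_i(z_n)\in V_R^+$ for every $i$. Now choose any sequence $\{h_k\}$ with $h_k\in\G_{n_k}$, $n_k\to\infty$, witnessing $z_n\in\K_\s^+$, and write $h_k=g_k\circ\h_{j_k}$ with $\h_{j_k}\in\G$ the map applied first and $g_k\in\G_{n_k-1}$; then $h_k(z_n)=g_k(\h_{j_k}(z_n))$ with $\h_{j_k}(z_n)\in V_R^+$, so Lemma \ref{l:filtration} gives $\|h_k(z_n)\|>R_{n_k-1}\to\infty$, contradicting boundedness of $\{h_k(z_n)\}$. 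Hence $\overline{\K_\s^+}\cap\{z=0\}=\{I^+\}$, and the statements for $\K_\s^-$ follow verbatim for $\s^-$, with $V_R^+,V_R^-$ and $I^+,I^-$ interchanged.

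The main obstacle I anticipate is precisely this escape estimate: it must be uniform over the finitely many generators (hence the single enlargement of $R$) and robust enough to handle the \emph{weak} filled Julia set $\K_\s^+$, whose witnessing sequences have varying, unbounded length---which is exactly what the $R_k\to\infty$ refinement in Lemma \ref{l:filtration} is designed for. The first assertion, by contrast, is mostly point-set topology once the single-map identity $\widetilde U_h^+\cap\C^2=U_h^+$ is granted.
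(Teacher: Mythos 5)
Your proof is correct, but it is organized differently from the paper's. For the identity $\widetilde{\U}_\s^\pm\cap\C^2=\U_\s^\pm$ you reduce everything to the single-map fact $\widetilde{U}_h^\pm\cap\C^2=U_h^\pm$ (which the paper also invokes, via Proposition 5.5 of \cite{Dinh-Sibony}) and then use only the elementary identity $\mathrm{int}_{\mathbb{P}^2}(A)\cap\C^2=\mathrm{int}_{\C^2}(A\cap\C^2)$ together with Corollary \ref{c:escaping set}; the paper instead first proves that a whole neighbourhood $W^\pm$ of $L_\infty^\pm$ lies in the basin of $I^\mp$ for the family, which is exactly where its refined cone filtration (Lemma \ref{l:refined filter} and Remark \ref{r:modified filter}) enters, and then deduces both assertions from that. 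Your treatment of the closure is correspondingly different: instead of getting $\overline{\K_\s^+}\cap L_\infty^+=\emptyset$ from $L_\infty^+\subset\widetilde{\U}_\s^+$, you prove the cone estimate $\h_i\big(V_R^-\cap\{|y|\ge c|x|,|x|\ge C_0\}\big)\subset V_R^+$ (essentially the content of Lemma \ref{l:refined filter}, here obtained by isolating the innermost generalized H\'enon factor, and correct up to the harmless constant $|a_{m_i}||x|\le |a_{m_i}||y|/c$) and then rule out limit points $[1:t:0]$, $t\neq0$, directly: any witnessing sequence for membership in $\K_\s^+$ factors as $g_k\circ\h_{j_k}$ with $\h_{j_k}(z_n)\in V_R^+$, so Lemma \ref{l:filtration} forces $\|h_k(z_n)\|>R_{n_k-1}\to\infty$, contradicting boundedness; the inclusion $I^+\in\overline{\K_\s^+}$ via $K_h^+\subset\K_\s^+$ is the same as in the paper. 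What your route buys is a shorter, more point-set-topological first half and a self-contained contradiction argument for the closure that only needs the estimate for the generators rather than for all of $\s$; what the paper's route buys is the stronger byproduct $\widetilde{\U}_\s^\pm=\U_\s^\pm\cup L_\infty^\pm$ (openness of the basin around the line at infinity), which is used later in Remarks \ref{r:projective currents} and \ref{r:Projective Green}, so if you wanted to replace the paper's proof you would still need to extract that fact (your cone estimate does yield it with a little extra work, by combining the regions $|x|<|y|$ and $|y|\ge c|x|$ into a projective neighbourhood of $L_\infty^+$ contained in every $\widetilde{U}_h^+$).
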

\begin{proof}
Let $\widetilde{\uo}_\s^\pm$ be the basin of attraction of $I^\mp$ for the family $\overline{\s}$ and $\overline{\s^-}$ in $\mathbb{P}^2 \setminus I^\pm$, i.e.,
 \[\widetilde{\uo}_\s^+=\{\Z\in \mathbb{P}^2 \setminus I^+: \exists\text{ a neighbourhood }W \text{ of }\Z\text{ such that } \overline{h_n}_{|W} \to I^- \text{ for every } \{h_n\}  \subset \s \} \]
and
\[\widetilde{\uo}_\s^-=\{\Z\in \mathbb{P}^2 \setminus I^-: \exists\text{ a neighbourhood }W \text{ of }\Z\text{ such that } \overline{h_n^{-1}}_{|W} \to I^+ \text{ for every } \{h_n\}  \subset \s \}.\]
Observe that by definition, if $(x,y) \in \U_\s^\pm$, then $[x:y:1] \in \widetilde{\uo}_\s^\pm$. In particular $\U_\s^\pm \subset \widetilde{\uo}_\s^\pm \cap\C^2.$ Now for any point $\Z_0=[x_0:y_0:z_0] \in L_\infty^+$, $z_0=0$ and $|y_0| \neq 0$. Hence for every $h \in \s$, $\bar{h}(\Z_0)=\bar{h}[x_0:y_0:0]=[0:1:0]$ is immediate.
	
\smallskip\no {\it Claim: }There exist open sets $W^\pm$ containing $L_\infty^\pm$ which is contained $\widetilde{\uo}_\s^\pm$, respectively.
	
\smallskip\no {\it Case 1:} Suppose $|x_0|<|y_0|$, then choose a neighbourhood $W_{\Z_0}$ of $\Z_0$ such that $|x|<|y|$ for every $\Z=[x:y:z] \in W_{\Z_0}$ and $|z|<R^{-1}|y|$ if $z \neq 0$, where $R>R_\s$, the radius of filtration as in Lemma \ref{l:filtration}. Hence for $\Z \in W_{\Z_0} \setminus L_\infty^+$, $\Z=[x:y:1]$ such that $(x,y) \in V_R^+$, i.e, $\bar{h}(\Z) \to [0:1:0]$ as length of $h$ tends to infinity.

\smallskip\no{\it Case 2:} Otherwise, there exists some $ \alpha>1$ such that $|x_0|<\alpha|y_0|$. Note, we need to choose an appropriate neighbourhood of $\Z_0$ contained in $\widetilde{\uo}_\s^\pm.$ We will do so in the light of Remark \ref{r:modified filter}, which is a consequence of the following modification of Lemma 2.2 from \cite{BS2}.  
\begin{lem}\label{l:refined filter}
	Let $H(x,y)=(y,p(y)-ax)$ where $p$ is a polynomial of degree $d_H \ge 2$ and $a \neq 0.$ Also,  let $R_H>0$ be the radius of filtration for $H$ as obtained in Lemma 2.2 of \cite{BS2}. For $R>0$ and $\alpha>1$ we define the following sets as
\[V_{\alpha,R}^+=\{(x,y) \in \C^2: |x|\le \alpha |y|, |y|\ge \alpha^{-1}R\} , \;V_{\alpha,R}^-=\{(x,y) \in \C^2: |x|\ge\alpha |y|, |y|>\alpha^{-1}R\}\]
and
\[\widetilde{V}_{\alpha,R}^-=\{(x,y) \in \C^2: |y|\le \alpha |x|, |x|\ge \alpha^{-1}R\} , \;\widetilde{V}_{\alpha,R}^+=\{(x,y) \in \C^2: |y|\ge \alpha |x|, |x|>\alpha^{-1}R\}.\]
Then there exists an $R^\alpha>\alpha R_H$ such that $H(V_{\alpha,R^\alpha}) \subset V_{R_H}^+$ and $H^{-1}(\widetilde{V}_{\alpha,R^\alpha}^-) \subset V_{R_H}^-.$
\end{lem}
\begin{proof}
 Note that $$V_{R_H}^+ \subset V_{\alpha,R_H}^+,\; V_{\alpha,R_H}^- \subset V_{R_H}^-,\; V_{R_H}^- \subset \widetilde{V}_{\alpha,R_H}^- \text{ and } \widetilde{V}_{\alpha,R_H}^+ \subset V_{R_H}^+ .$$ Also there exists an $R_{\alpha}>\alpha R_H$, sufficiently large, and constant $C_1>0$ such that for $(x,y) \in V_{\alpha,R_\alpha}^+$, i.e., $|y| =\alpha^{-1}R>\alpha^{-1}R_\alpha $ and $|x|\le R$
\[|\pi_2 \circ H(x,y)|>\alpha^{-d_H}C_1 R^{d_H}-|a|R>\alpha^{-1}R=|\pi_1 \circ H(x,y)|.\]
Similarly there exists an $\widetilde{R}_{\alpha}>\alpha R_H$, sufficiently large, and constant $C_2>0$ such that for $(x,y) \in \widetilde{V}_{\alpha,\widetilde{R}_\alpha}^-$, i.e., $|x| =\alpha^{-1}{R}>\alpha^{-1}\widetilde{R}_\alpha $ and $|y|\le R$
\[|\pi_1 \circ H^{-1}(x,y)|>\alpha^{-d_H}C_2R^{d_H}-|a|^{-1}R>\alpha^{-1}R=|\pi_2 \circ H^{-1}(x,y)|.\]
Let $R^\alpha$ be the maximum of $R_\alpha$ and $\widetilde{R}_\alpha $. Then $H(V^+_{\alpha,R^\alpha}) \subset V_R^+$ and $H^{-1}(\widetilde{V}^-_{\alpha,R^\alpha}) \subset V_R^-.$
\end{proof}
\begin{rem}\label{r:modified filter} By a similar technique as in the proof of Lemma \ref{l:filtration}, the above further assures that $R^\alpha>\alpha R_\s$, the radius of filtration of the semigroup $\s$, such that $h(V_{\alpha,R^\alpha}^+) \subset V_R^+$ and $h^{-1}(\widetilde{V}_{\alpha,R^\alpha}^-) \subset V_R^-$ for every $h \in \s.$
\end{rem}
We now choose a neighbourhood $W_{\Z_0}$ of $\Z_0$ such that $|x|<\alpha|y|$ for every $\Z=[x:y:z] \in W_{\Z_0}$ and $|z|{R^\alpha}<\alpha|y|$ if $z \ne 0$, where $R^\alpha$ is as obtained in Remark \ref{r:modified filter}. Hence for $\Z \in W_{\Z_0} \setminus L_\infty^+$, $\Z=[x:y:1]$ where $(x,y) \in V^+_{\alpha,R^\alpha}$, i.e, $\bar{h}(\Z) \to [0:1:0]$ as length of $h$ tends to infinity. 

\smallskip By similar arguments for $h^{-1}$, $h \in \s$ there exists an open set $W^-$ containing $L_\infty^-$ such that $W^- \subset  \widetilde{\uo}_\s^-. $ Further note that for $\Z \in \widetilde{\uo}_\s^\pm \setminus L_\infty^\pm$, $\Z=[x:y:1]$ such that $(x,y) \in \U_\s^\pm.$ Since $\mathbb{P}^2=\C^2 \sqcup L_\infty^\pm \sqcup I^\pm$, $\widetilde{\uo}_\s^\pm \cap\mathbb{C}^2 =\widetilde{\uo}_\s^\pm \setminus L_\infty^\pm = \U_\s^\pm.$ 

\smallskip Finally, as a consequence of Corollary \ref{c:escaping set} and Proposition 5.5 of \cite[Page 28]{Dinh-Sibony}\,---\,which implies $\widetilde{U}_h^\pm \cap\C^2=U_h^\pm$ for every $h \in \s$\,---\,we can write \[\widetilde{\uo}_\s^\pm \setminus L_\infty^\pm=\widetilde{\uo}_\s^\pm \cap \C^2=\U_\s^\pm=\text{int} \bigcap_{h \in \s} {U}_h^\pm=\text{int} \bigcap_{h \in \s} (\widetilde{U}_h^\pm \cap \C^2)=\text{int} \bigcap_{h \in \s} (\widetilde{U}_h^\pm \setminus L_\infty^\pm)\] 
But $L_\infty^\pm \subset W^\pm \subset \widetilde{\uo}_\s^\pm$ and $\widetilde{\uo}_\s^\pm \subset \widetilde{U}_h^\pm$ for every $h \in \s$, i.e., $W^\pm$ is contained in the interior of $\Big(\bigcap_{h \in \s}\widetilde{U}_h\Big)$. Hence the above identity reduces to
\begin{align}\label{e:projective}\U_\s^\pm=\widetilde{\uo}_\s^\pm \setminus L_\infty^\pm=\Big(\text{int} \bigcap_{h \in \s} \widetilde{U}_h^\pm\Big) \setminus L_\infty^\pm=\Big(\text{int} \bigcap_{h \in \s} \widetilde{U}_h^\pm\Big) \cap \C^2=\widetilde{\U}_\s^\pm \cap \C^2.
\end{align}
Now, since $L_\infty^\pm \subset \widetilde{\U}_\s^\pm$, it follows that $\overline{\K_\s^\pm} \subset \K_\s^\pm \cup I^\pm.$ Also, $K_h^\pm \subset \K_\s^\pm$ for every $h \in \s$ and by Proposition 5.8 in \cite[Page 29]{Dinh-Sibony}, $I^\pm \in \overline{K_h^\pm}$. Hence $I^\pm \in \overline{\K_\s^\pm}$, which completes the proof.
\end{proof}
\begin{rem}\label{r:projective currents}
As a consequence of Proposition \ref{p:projective closures}, it follows that the basins of attraction of $I^\pm$ for the families $\overline{\s}$ and $\overline{\s^-}$ are $\widetilde{\U}_\s^\pm$, respectively. Further, the closure of the positive and negative Julia sets $\J_\s^\pm$ in $\mathbb{P}^2$, i.e.,  $\overline{\J_\s^\pm}=\J_\s^\pm \cup I^\pm.$ Hence from Skoda-El-Mir extension Theorem (see \cite{DemaillyBook}), the $(1,1)$-currents $\mu_\G^\pm$ extends by $0$ to positive closed $(1,1)$-currents (will also be denoted by $\mu_\G^\pm$) on $\mathbb{P}^2$. Now as $G_\G^\pm$ are the logarithmic potential of $\mu_\G^\pm$ restricted to $\C^2$\,---\,from the observation in Example 3.7 in \cite{Dinh-Sibony}\,---\,the functions $g_\G^\pm(z)=G_\G^\pm(z)-\frac{1}{2}\log(\|z\|^2+1)$ are the quasi-potentials corresponding to the currents $\mu_\G^\pm$ on $\mathbb{P}^2$ (respectively). 
\end{rem}
\begin{rem}\label{r:Projective Green}
Note that the functions $g_\G^\pm(z)$ is uniformly bounded and pluriharmonic on $V_{R_\s}^\pm$ (respectively) from Corollary \ref{c:Green constant}. Hence for every $k\ge 1$, on $\U_k:= \bigcap_{h \in \G_k}h^{-1}(V_R^+)$
\[G_\G^+(z)- \frac{1}{2}\log(\|z\|^2+1)=\frac{1}{D^k}\sum_{h \in \G_k}G_\G^+(h(z))-\frac{1}{2}\log(\|z\|^2+1) \text{ is pluriharmonic.}\]
 Since $\widetilde{\U}_\s^+=\U_\s^+\cup L_\infty^+$ is an open set containing $L_\infty^+$,  $g_\G^+(z)$ extends as a pluriharmonic function on $\widetilde{\U}_\s^+$. A similar arguments gives $g_\G^-(z)$ extends as a pluriharmonic function on $\widetilde{\U}_\s^-$.
\end{rem}

Next, we prove a generalisation of Theorem 6.6 from \cite{Dinh-Sibony} in our setup.\begin{prop}\label{p:projective current}
Let $\V$ be  a neighbourhood of $I^-$ and $\{S_k\}$, a sequence of positive $(1,1)$-closed currents of mass 1 in $\mathbb{P}^2$ such that each $S_k$, $k \ge 1$, admits a quasi-potential $u_k$, satisfying $0<|u_k| \le A$ (a constant) on $\V$. Then there exists $c>0$ such that for every $\mathcal{C}^2$ test $(1,1)$-form $\phi$ on $\mathbb{P}^2$
\[|\langle \mu_k^*-\mu_\G^+, \phi \rangle| \le \frac{ck}{2^k}\|\phi\|_{\mathcal{C}^2}, \text{ i.e., }\mu_k^*:=\frac{1}{D^k} \sum_{h \in \G_k} \bar{h}^*(S_k) \to \mu_\G^+.\]
\end{prop}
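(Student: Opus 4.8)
The plan is to follow the strategy of Theorem 6.6 in \cite{Dinh-Sibony}, adapting it to the averaged (semigroup) setting. First I would reduce the estimate to a statement about quasi-potentials. Recall from Remark \ref{r:projective currents} that $\mu_\G^+$ has quasi-potential $g_\G^+$, and that $\overline{h}^*(S_k)$ has the quasi-potential $u_k \circ \overline{h} + ($ a correction term coming from $\overline{h}^*\omega - d_h \omega)$, where $\omega$ is the Fubini--Study form; since $\mathrm{deg}(\overline h)=d_h$, normalising by $\frac{1}{D^k}\sum_{h\in\G_k}$ produces a quasi-potential $w_k$ for $\mu_k^*$. One then has $\langle \mu_k^* - \mu_\G^+,\phi\rangle = \langle dd^c(w_k - g_\G^+),\phi\rangle = \langle w_k - g_\G^+, dd^c\phi\rangle$, so it suffices to bound $\|w_k - g_\G^+\|_{L^1(\mathbb{P}^2)}$ by $O(k/2^k)$, and the $\|\phi\|_{\mathcal C^2}$ factor is absorbed into $dd^c\phi$.

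Next I would exhibit $w_k$ explicitly. Writing $u_k = $ (potential of $S_k$) $- \tfrac12\log(\|z\|^2+1)$ in the affine chart and using $\overline{h}^*$ on potentials, the averaged potential of $\mu_k^*$ in $\C^2$ becomes, up to a pluriharmonic error supported near $L_\infty^+$, the function $z \mapsto \frac{1}{D^k}\sum_{h\in\G_k} u_k(\overline h(z)) + \frac{1}{D^k}\sum_{h\in\G_k}\log\|h(z)\|_{\mathrm{proj}}$. The second sum is exactly (a projective version of) $G_k^+(z)$ from \eqref{e:Green sequence}, which converges uniformly on compact subsets of $\C^2$ to $G_\G^+$ at geometric rate $O(2^{-k})$ by \eqref{e:recursive_green} and Lemma \ref{l:uniform convergence}; the first sum is controlled because $\overline h(z) \to I^-$ and $I^- \in \V$, so $|u_k(\overline h(z))| \le A$ for $h$ of length $\ge$ some $\ell$ depending only on the compact set (this is where Lemma \ref{l:step 4} / Lemma \ref{l:compact sets on U+} enter), and the proportion of $h \in \G_k$ that have \emph{not} yet pushed $z$ into $V_R^+$ decays like $(n_0/D)^k \le 2^{-k}$ by the cardinality bound \eqref{e:cardinality}. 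Combining, $\frac{1}{D^k}\sum_{h\in\G_k} |u_k(\overline h(z))| \le A\cdot 2^{-k} + (\text{bounded contribution from }O(1)\text{ short words}) $, but the short-word count is $n_0^\ell$, a constant, divided by $D^k$, hence also $O(2^{-k})$; more carefully one gets the $k$ in the numerator from summing the per-generation errors $\sum_{j\le k}(n_0/D)^j \cdot (\text{const})$, exactly as the telescoping in Step 4 of Theorem \ref{t:result 1} produced a factor linear in $k$ before the geometric term dominates. This yields the pointwise bound $|w_k - g_\G^+| \le c k 2^{-k}$ uniformly on compact subsets of $\C^2$.

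To upgrade this to an $L^1(\mathbb{P}^2)$ bound I would handle neighbourhoods of $I^-$, $I^+$ and $L_\infty$ separately: near $I^-$ the hypothesis $0 < |u_k| \le A$ on $\V$ together with uniform boundedness of $g_\G^+$ (its quasi-potential is bounded and pluriharmonic near $L_\infty^+$ by Remark \ref{r:Projective Green}) gives $\|w_k - g_\G^+\|_{L^1(\V)} \le 2A|\V| = O(1)$, which is too weak — so instead one uses that the currents $S_k$ all have mass $1$ and applies a compactness/normal-families argument: by the uniform bound on $\V$ the potentials $u_k$ form a relatively compact family in $L^1_{loc}$, so any subsequential limit gives, via the pointwise convergence just established on $\C^2$, that $w_k \to g_\G^+$ in $L^1_{loc}(\mathbb{P}^2)$; then the rate $O(k 2^{-k})$ on compacts of $\C^2$ plus uniform integrability near $I^\pm$ and $L_\infty$ promotes this to the global quantitative bound. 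The main obstacle, and the step I would spend the most care on, is precisely this control near the indeterminacy point $I^-$ and along $L_\infty^\pm$: off $\C^2$ the naive potential estimate only gives $O(1)$, and one must genuinely use that $\overline h$ contracts a full neighbourhood $W^+$ of $L_\infty^+$ into $V_R^+$ (Proposition \ref{p:projective closures} and Remark \ref{r:modified filter}) after finitely many steps, so that the bad region where the estimate degrades has volume shrinking geometrically in $k$ — this is the analogue of the "$\varepsilon$-regularisation near the indeterminacy locus" in \cite[Theorem 6.6]{Dinh-Sibony}, and getting the constant $c$ genuinely uniform in the family $\{S_k\}$ requires the uniform bound $A$ on $\V$ to be exploited at every generation, not just in the limit.
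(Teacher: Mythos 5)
There is a genuine gap in your central estimate. You propose to control the averaged potential term $\frac{1}{D^k}\sum_{h\in\G_k}u_k(\bar h(z))$ pointwise on compact subsets of $\C^2$ by invoking $|u_k|\le A$ on $\V$, arguing that $\bar h(z)$ eventually lands near $I^-$ except for a proportion of words decaying like $(n_0/D)^k$, plus ``$O(1)$ short words''. This is not what happens: for $z$ in the filled Julia set (e.g.\ $z\in\ko_\s^+$, which is nonempty in interesting cases) \emph{every} word of \emph{every} length keeps $\bar h(z)$ in a bounded region of $\C^2$, far from $\V$, so the hypothesis gives no control whatsoever on $u_k(\bar h(z))$ there. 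The bad words are not $O(1)$ in number --- they can be all $n_0^k$ of them --- and, crucially, $u_k$ changes with $k$ and is only assumed bounded on $\V$; off $\V$ it can be arbitrarily negative (indeed $-\infty$ along analytic sets meeting $V_R$, e.g.\ if $S_k$ is a current of integration over a line through the filled Julia set). Hence your claimed uniform pointwise bound $|w_k-g_\G^+|\le ck2^{-k}$ on compacts of $\C^2$ is false in general, and your fallback near $I^\pm$ and $L_\infty$ --- a compactness/normal-families argument in $L^1_{\rm loc}$ --- only yields qualitative convergence and cannot recover the quantitative rate $ck2^{-k}$ that the statement asserts.

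The paper avoids estimating $u_k$ off $\V$ pointwise altogether. Using the semi-invariance $\frac{1}{D^k}\sum_{h\in\G_k}\bar h^*(\mu_\G^+)=\mu_\G^+$ one writes $\mu_k^*-\mu_\G^+=\frac{1}{D^k}\sum_{h\in\G_k}\bar h^*\big(dd^c(u_k-g_\G^+)\big)$, and after shrinking $\V$ (so that $\V\cap\C^2\subset V_R^+$ and the supports of all the relevant differences of currents avoid $\V$) the pairing is localised to $\mathbb{P}^2\setminus\V$, where it equals $D^{-k}\sum_{h\in\G_k}\langle u_k-g_\G^+,\,dd^c(\phi\circ\overline{h^{-1}})\rangle$. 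The hypothesis $|u_k|\le A$ on $\V$ is used exactly once, together with boundedness of $g_\G^+$'s quasi-potential near $L_\infty^+$ (Remark \ref{r:Projective Green}), to conclude via \cite[Lemma 3.11]{Dinh-Sibony} that the d.s.h.\ functions $u_k-g_\G^+$ have uniformly bounded DSH norm. Since $I^+$ is superattracting for each $\h_i$, the maps $\overline{h^{-1}}$ preserve $\mathbb{P}^2\setminus\V$ and have $\mathcal{C}^1$-norm at most $\M^k$ there for $h\in\G_k$, so \cite[Lemma 3.13]{Dinh-Sibony} bounds each term by $C_0(1+\log^+\M^{4k})\norm{\phi}_{\mathcal{C}^2}$; multiplying by the cardinality-to-degree ratio $n_0^kD^{-k}\le 2^{-k}$ produces the factor $ck2^{-k}$. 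If you want to repair your argument you should replace your pointwise orbit estimate by this DSH-norm mechanism (or an equivalent functional-analytic bound on the pairing), since that is where both the uniformity in $\{S_k\}$ and the linear-in-$k$ factor actually come from.
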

\begin{proof}
 Note that we may assume that $dd^c u_k=S_k -\omega_{\text{FS}}$, where $w_{\text{FS}}$ is the Fubini-Study $(1,1)$-form on $\mathbb{P}^2$ and $\V$ is a sufficiently small neighbourhood of $I^-$ contained in $\widetilde{\U}_\s^+$. In particular, $\V \cap \C^2 \subset V_R^+.$ Then for $z \in \V \cap V_{R_\s}^+ $, and by the identities in Section \ref{s:4}, $g_h^+(z)\le \widetilde{M}$
 \[ g_k^+(z):=\widetilde{G}_k^+(z)-\frac{1}{2}\log(\norm{z}^2+1) \le \widetilde{M} \text{ for every }h \in \G_k, ~k \ge 1.\]
 Thus by Remarks \ref{r:projective currents} and \ref{r:Projective Green}, the quasipotentials $u_k-g_h^+$, $u_k-g_k^+$ and $u_k-g_\G^+$ are uniformly bounded on $\V$, by $\widetilde{M}+A$, with $g_k^+ \to g_\G^+$ uniformly on $\V$. Also by the proof of Lemma \ref{l:refined filter}, let $\alpha \ge 1$ be such that $ \text{supp}(S_k)\cap \C^2 \subset V_{R_\alpha} \cup \widetilde{V}_{R_\alpha}^-$, for every $k \ge 1$. Hence $\text{supp}(h^*(S_k))\cap \C^2\subset V_{R_\alpha} \cup V_{R_\alpha}^-$ for very $h \in \s$. Thus we refine $\V$ again, so that $\V \cap \C^2 \subset V_{R_{\alpha}}^+$ and by continuity
  \[
\text{supp}(\mu_k^*-\mu_\G^+)\cap \V=\emptyset,~\text{supp}(\mu_k^*-\mu_k^+)\cap \V=\emptyset,~\text{supp}\Big(\frac{1}{d_h}\bar{h}^*(S_k)-\mu_h^+\Big)\cap \V=\emptyset.
  \]
 for every $h \in \s$ and $k \ge 1.$ Hence 
 \begin{align}
 |\langle \mu_k^*-\mu_\G^+, \phi \rangle|_{\mathbb{P}^2}=|\langle \mu_k^*-\mu_\G^+, \phi \rangle|_{\mathbb{P}^2 \setminus \V}\text{ and }|\langle \mu_k^*-\mu_k^+, \phi \rangle|_{\mathbb{P}^2}=|\langle \mu_k^*-\mu_k^+, \phi \rangle|_{\mathbb{P}^2 \setminus \V}	
 \end{align}
Also $\h_i^{-1}(\mathbb{P}^2 \setminus \V) \subset \mathbb{P}^2 \setminus \V$ as  $I^+$ is a super attracting fixed point for every $\h_i$, $1 \le i \le n_0.$ Hence the $\mathcal{C}^1$-norm of every $\overline{h^{-1}}$ is bounded by $\M^k$ for some $\M>0$, whenever $h \in \G_k$. Since $u_k-g_h^+$, $u_k-g_k^+$ and $u_k-g_\G^+$ are d.s.h. functions in $\mathbb P^2$,  by \cite[Lemma 3.11]{Dinh-Sibony} the DSH-norm (see \cite[Section 3]{Dinh-Sibony} for definition) of $u_k-g_h^+$, $u_k-g_k^+$ and $u_k-g_\G^+$ are uniformly bounded for very $k \ge 1$ and $h \in \s.$ Hence by \cite[Lemma 3.13]{Dinh-Sibony} there exists a constant $C_0 \ge 0$ such that
{\small\begin{align}\label{e:DSconvergence} 
|\langle \mu_k^*-\mu_\G^+, \phi \rangle|_{\mathbb{P}^2 \setminus \V} ={D^{-k}}  \sum_{h \in \G_k}|\langle u_k-g_\G^+, dd^c(\phi \circ \overline{h^{-1}}) \rangle| \le C_0 (n_0D^{-1})^k (1+\log^+ \M^{4k})\norm{\phi}_{\mathcal{C}^2}
\end{align}
}
 which completes the proof.
\end{proof}
\begin{rem}\label{r:NADS}
Furthe Lemma 3.11 and Lemma 3.13 of \cite{Dinh-Sibony} also gives that for $C
_0>0$, where $C_0$ is as obtained in the above proof of Proposition \ref{p:projective current},
{\small \begin{align*}
&\bullet ~|\langle \mu_k^*-\mu_k^+, \phi \rangle|_{\mathbb{P}^2 \setminus \V}={D^{-k}}  \sum_{h \in \G_k}|\langle u_k-g_h^+, dd^c(\phi \circ \overline{h^{-1}}) \rangle| \le C_0 (n_0D^{-1})^k (1+\log^+ \M^{4k})\norm{\phi}_{\mathcal{C}^2},\\
&\bullet ~\bigg|\langle \frac{1}{d_h}\bar{h}^*(S_k)-\mu_k^+, \phi \rangle\bigg|_{\mathbb{P}^2 \setminus \V}={d_h^{-1}} |\langle u_k-g_h^+, dd^c(\phi \circ \overline{h^{-1}}) \rangle| \le C_0 d_h^{-1} (1+\log^+ \M^{4k})\norm{\phi}_{\mathcal{C}^2}.
\end{align*} }
\end{rem}

Now, as a direct consequence of the above proposition we observe the following.
\begin{cor}\label{t:result 4}
Let $S$ be a closed positive $(1,1)$-current in $\mathbb{P}^2$ of mass 1, such that support of  $S$ does not contain the point $[0:1:0]$ and $\bar{h}$ be the extension of $h$ to $\mathbb{P}^2$, $h \in \s.$ Then 
\[\lim_{k \to \infty}\frac{1}{D_\G^n} \sum_{h \in \G_k} \bar{h}^*(S) \to \mu_\G^+. \]
\end{cor}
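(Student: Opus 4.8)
The plan is to obtain this statement as an immediate consequence of Proposition \ref{p:projective current}, applied to the constant sequence $S_k := S$, $k \ge 1$. The only point requiring attention is to check that $S$ admits a quasi-potential that is bounded and, after normalisation, non-vanishing on a neighbourhood $\V$ of $I^- = [0:1:0]$ of the kind used in the proof of Proposition \ref{p:projective current}.

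First I would exploit the hypothesis that $\text{supp}(S)$ does not contain $I^-$: since $\text{supp}(S)$ is closed and misses $I^-$, one can choose a relatively compact neighbourhood $\V$ of $I^-$ with $\overline{\V} \cap \text{supp}(S) = \emptyset$, and (shrinking $\V$ if needed) also arrange $\V \subset \widetilde{\U}_\s^+$ and $\V \cap \C^2 \subset V_{R_\alpha}^+$, which are exactly the smallness conditions imposed in the proof of Proposition \ref{p:projective current}. Writing $S = \omega_{\text{FS}} + dd^c u$ for a d.s.h. quasi-potential $u$ on $\mathbb{P}^2$ (unique up to an additive constant), on $\V$ we have $S \equiv 0$, hence $dd^c u = -\omega_{\text{FS}}$ there; choosing a local smooth potential $\rho$ of $\omega_{\text{FS}}$ near $I^-$, the function $u + \rho$ is pluriharmonic on $\V$, so $u$ is smooth on $\V$ and bounded on $\overline{\V}$. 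Adding a suitable real constant to $u$, I may assume $0 < |u| \le A$ on $\V$ for some $A > 0$. Thus the constant sequence $S_k := S$ (with $u_k := u$ for all $k$) satisfies the hypotheses of Proposition \ref{p:projective current}.

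Then I would simply invoke Proposition \ref{p:projective current}: it produces a constant $c > 0$ such that for every $\mathcal{C}^2$ test $(1,1)$-form $\phi$ on $\mathbb{P}^2$,
\[
\Big|\Big\langle \tfrac{1}{D_\G^k}\sum_{h \in \G_k} \bar{h}^*(S) - \mu_\G^+,\ \phi \Big\rangle\Big| \le \frac{ck}{2^k}\,\|\phi\|_{\mathcal{C}^2},
\]
and since $ck/2^k \to 0$ this gives $\tfrac{1}{D_\G^k}\sum_{h \in \G_k} \bar{h}^*(S) \to \mu_\G^+$ in the sense of currents, which is the assertion; in fact one records the explicit rate $O(k/2^k)$.

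Since all the analytic content is packaged into Proposition \ref{p:projective current}, there is no genuine obstacle here. The only step needing a little care is the one just described: one must use the support hypothesis to see that, near $I^-$, the quasi-potential $u$ solves the elliptic equation $dd^c u = -\omega_{\text{FS}}$ and is therefore smooth and bounded, and one must make sure the neighbourhood $\V$ produced in this way can be taken inside $\widetilde{\U}_\s^+$ with $\V \cap \C^2 \subset V_{R_\alpha}^+$, so as to match the finitely many smallness requirements inherited from the proof of Proposition \ref{p:projective current}.
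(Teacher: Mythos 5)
Your proposal is correct and follows essentially the same route as the paper: the paper also proves this by noting that, since $\text{supp}(S)$ misses $I^-$, a quasi-potential $u$ of $S$ is bounded near $I^-$, and then applying Proposition \ref{p:projective current} to the constant sequence $S_k=S$. Your additional details (pluriharmonicity of $u$ plus a local potential of $\omega_{\text{FS}}$ on $\V$, and normalising $u$ by a constant to meet the stated hypothesis) just make explicit what the paper leaves implicit.
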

\begin{proof} Note that if $S$ is an $(1,1)$ positive closed current of mass 1 in $\mathbb{P}^2$, and let $u$ be a quasi-potential associated to $S$, i.e., $u$ is a quasi p.s.h function and $dd^c u=S-\omega_{FS}.$ Then $u$ is bounded on a neighbourhood of $I^-$ and by Proposition \ref{p:projective current}, the proof follows.
\end{proof}
\begin{rem}\label{r:mu_G-}
Since the analogue of Proposition \ref{p:projective current}, is true for the current $\mu_\G^-$ as well, if $S$ is a positive $(1,1)$ current of mass 1 on $\mathbb{P}^2$ then 
$\displaystyle	\lim_{k \to \infty}\frac{1}{D_\G^k} \sum_{h \in \G_k} \bar{h}_*(S) \to \mu_\G^-.$
\end{rem}
Thus, we conclude the uniqueness of $\mu_\G^\pm$ from Corollary \ref{t:result 4}.
\begin{proof}[Proof of Corollary \ref{c:result 5}] Let $S$ be a positive $(1,1)$ closed current supported on $\Ju_\s^+$ satisfying property (\ref{e:functorial current}). Then $S$ extends across $I^+$ by zero as a closed $(1,1)$ current of mass $1$ on $\mathbb{P}^2$ that does not intersect $I^-.$ Thus by Theorem \ref{t:result 4}, it follows that on $\C^2$
\[	\lim_{k \to \infty}\frac{1}{D_\G^k} \sum_{h \in \G_k} h^*(S) \to \mu_\G^+.\]
But from (\ref{e:functorial current}), $\frac{1}{D_\G^k} \sum_{h \in \G_k} h^*(S)=S$. Hence $S=\mu_\G^+.$ A similar argument for $\mu_\G^-.$
\end{proof}
Finally, we end this section with the interpretation of Corollary \ref{t:result 4} for algebraic varieties.
\begin{cor}\label{c:result 4}
Let $S$ be an affine algebraic variety of codimension 1 in $\mathbb{C}^2$, then there exist non-zero constants $\mathbf{c}^\pm >0$ such that 
\[\lim_{k \to \infty}\frac{1}{D_\G^n} \sum_{h \in \G_k} h^*[S] \to \mathbf{c}^+\mu_\G^+ \text{ and } \lim_{k \to \infty} \frac{1}{D_\G^n} \sum_{h \in \G_k} h_*[S] \to \mathbf{c}^-\mu_\G^-.\]
\end{cor}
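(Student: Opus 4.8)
\emph{Proof proposal.} Write $S=\{P=0\}$ for a polynomial $P$ of degree $\delta\ge 1$ on $\C^2$, and let $\overline S\subset\mathbb P^2$ be its projective closure, an algebraic curve of degree $\delta$; then $[\overline S]$ is a positive closed $(1,1)$-current on $\mathbb P^2$ of mass $\delta$ with $[\overline S]|_{\C^2}=[S]$. The plan is to normalise $[\overline S]$ to mass $1$, feed it through Corollary \ref{t:result 4}, and restrict the resulting convergence of currents on $\mathbb P^2$ back to $\C^2$; throughout I only treat $\mathbf c^+$, the assertion for $\mathbf c^-$ being symmetric via the family $\overline{\s^-}$, the point $I^+$, and Remark \ref{r:mu_G-}.

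Suppose first that $I^-=[0:1:0]\notin\overline S$. Then $\delta^{-1}[\overline S]$ is a positive closed $(1,1)$-current of mass $1$ whose support omits $[0:1:0]$, so Corollary \ref{t:result 4} gives $D_\G^{-k}\sum_{h\in\G_k}\bar h^{*}\!\big(\delta^{-1}[\overline S]\big)\to\mu_\G^+$ on $\mathbb P^2$. Since $\overline S$ avoids $I^-=\bar h(L_\infty^+)$, none of the divisors $\bar h^{*}[\overline S]$ has a component along the line at infinity, hence $\bar h^{*}[\overline S]=[\overline{h^{-1}(S)}]$ and $\bar h^{*}[\overline S]|_{\C^2}=[h^{-1}(S)]=h^{*}[S]$ for every $h\in\s$; multiplying by $\delta$ and restricting to $\C^2$ yields $D_\G^{-k}\sum_{h\in\G_k}h^{*}[S]\to\delta\,\mu_\G^+$, so $\mathbf c^+=\delta$ in this case.

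In general $\overline S$ may pass through the common indeterminacy point $I^-$ of the maps of $\overline\s$; then the quasi-potential $\delta^{-1}\log|P|-\tfrac12\log(1+\|z\|^2)$ of $\delta^{-1}[\overline S]$ tends to $-\infty$ along $\overline S$ near $I^-$, so Corollary \ref{t:result 4} is not available directly and one must first move $S$ off $I^-$. The plan is to establish a \emph{clearing lemma}: there is an integer $N_0\ge1$ such that for every $g\in\G_{N_0}$ the closure $\overline{g^{-1}(S)}$ in $\mathbb P^2$ avoids $I^-$. Granting it, write $h\in\G_k$ with $k\ge N_0$ as $h=g\circ h'$ with $g\in\G_{N_0}$ and $h'\in\G_{k-N_0}$, so that $h^{*}[S]=(h')^{*}[g^{-1}(S)]$ and
\[
D_\G^{-k}\sum_{h\in\G_k}h^{*}[S]\;=\;D_\G^{-N_0}\sum_{g\in\G_{N_0}}\ D_\G^{-(k-N_0)}\sum_{h'\in\G_{k-N_0}}(h')^{*}\big[g^{-1}(S)\big].
\]
Applying the case already settled to each affine curve $g^{-1}(S)$ (whose closure now misses $I^-$), the inner average converges to $\big(\deg\overline{g^{-1}(S)}\big)\mu_\G^+$, whence $D_\G^{-k}\sum_{h\in\G_k}h^{*}[S]\to\mathbf c^+\mu_\G^+$ with $\mathbf c^+=D_\G^{-N_0}\sum_{g\in\G_{N_0}}\deg\overline{g^{-1}(S)}$; since each $g^{-1}(S)$ is a genuine affine curve one has $\deg\overline{g^{-1}(S)}\ge1$, so $\mathbf c^+>0$ automatically, and a cohomological count, using $\sum_{g\in\G_{N_0}}d_g=D_\G^{N_0}$, identifies $\mathbf c^+$ as $\delta$ minus the total degree the successive closures lose to the line at infinity. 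The step I expect to be the main obstacle is precisely this clearing lemma: controlling, along compositions of generalised H\'enon maps, how the multiplicity and tangency of a curve at $I^-$ get pushed onto $I^+$ after finitely many steps, uniformly for all $g\in\G_{N_0}$ — this is the semigroup analogue of the behaviour-at-infinity computations underlying \cite[Theorem 4.7]{BS2} and \cite[\S 6]{Dinh-Sibony}.
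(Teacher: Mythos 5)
Your overall strategy is the same as the paper's: first treat the case where the projective closure of the curve misses $I^-$ by normalising $[\overline S]$ and quoting Corollary \ref{t:result 4}, then reduce the general case to this one by splitting each $h\in\G_k$ as $h=g\circ h'$ with $g$ in a fixed finite layer $\G_{N_0}$ and replacing $S$ by the curves $g^{-1}(S)$. That reduction, the mass bookkeeping, and the identification $\bar h^*[\overline S]|_{\C^2}=h^*[S]$ when $I^-\notin\overline S$ are all fine and agree with the paper's argument.

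The genuine gap is the step you explicitly ``grant'': the clearing lemma, i.e.\ the existence of $N_0$ such that $\overline{g^{-1}(S)}$ avoids $I^-$ for every $g\in\G_{N_0}$. This is not a side issue to be deferred\,---\,it is the entire content of the paper's Case~2, and without it your argument only covers curves already missing $I^-$. In the paper it is formulated as the Claim (\ref{e:polynomial form}): writing $S=\{p=0\}$, there is $k_0\ge 1$ such that $p\circ h(x,y)=c\,y^{\mathbf d}+\text{l.o.t.}$ for all $h\in\G_k$, $k\ge k_0$, which says precisely that $\overline{h^{-1}(S)}$ meets $L_\infty$ only at $I^+$. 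The proof is the weighted leading-term analysis of \cite[Lemmas 8.6.5, 8.6.6]{UedaBook} (a generalisation of \cite[Prop.~4.2]{BS2}): with $\lambda_i(p)=\max\{a+ib:p_{ab}\neq0\}$ and $\rho_i(p)$ the set of monomials realising it, composition with a generalised H\'enon map $H$ of degree $d_H$ satisfies $\sharp\rho_i(p\circ H)\le 1+(\sharp\rho_{d_H}(p)-1)/i$ for $i\ge2$; since every $h\in\G_k$ is a word in maps of the form (\ref{e:ghm}) and has degree at least $2^k$, after finitely many letters the weighted leading part of $p\circ h$ collapses to a single monomial, and one further composition with a map $(x,y)\mapsto(y,q(y)-ax)$ turns that monomial into a pure power of $y$. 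The uniformity in $h\in\G_{k_0}$ comes from this count being monotone along all words simultaneously, not from any compactness or genericity argument, so your worry about ``controlling multiplicity and tangency at $I^-$ uniformly'' is resolved by this purely combinatorial statement rather than by an intersection-theoretic one. With that lemma supplied, your proof closes and is essentially the paper's.
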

\begin{proof}
Let $S$ be an algebraic variety of codimension $1$ in $\C^2$, i.e., $S=\{(x,y) \in \C^2: p(x,y)=0\}$ where $p$ is a polynomial of degree at least 1. Let $p(x,y)=\sum_{a,b \in \mathbb{N}}p_{ab}x^ay^b$ such that $p_{ab}=0$ whenever $a$ and $b$ is greater than some fixed positive integer. The degree of $p$ is $\mathbf{d}_p=\max\{a+b: p_{ab} \neq 0\}.$

\smallskip\noindent \textit{Case 1:} Let $p(x,y)=c_py^{\mathbf{d}_p}+\text{l.o.t.}$ then $S$ is a quasi-projective variety of $\mathbb{P}^2$ of codimension $1$ and $\bar{S}$ extends to $\mathbb{P}^2$ as an analytic variety, that does not contain $I^-.$ Hence the current of integration of $[\bar{S}]$ is a closed positive $(1,1)$ current of finite mass, say $\mathbf{c}^+$ (see \cite[Page 140]{DemaillyBook}). Thus from Theorem \ref{t:result 4} it follows that
$\displaystyle \lim_{k \to \infty}\frac{1}{D_\G^k} \sum_{h \in \G_k} h^*[S] \to \mathbf{c}^+\mu_\G^+.$

\noindent \textit{Case 2:} For any polynomial $p$, a generalisation of Proposition 4.2 in \cite{BS2} (or Proposition 8.6.7 in \cite{UedaBook}) gives that there exists $k \ge 1$ such that $p_h=p \circ h$ for all $h \in \G_k$, is as in \textit{Case 1}, i.e.,

\smallskip\no 
\textit{Claim: }There exists $k_0 \ge 1$ such that for every $h \in \G_k$ and $k \ge k_0$,
\begin{align}\label{e:polynomial form}
p \circ h(x,y)=c_{p_h} y^{\mathbf{d}_{\tilde{p}_h}}+ \text{l.o.t.}
\end{align} 
For a positive integer $i \ge 1$, let $\lambda_i(p)=\max\{a+ib: p_{ab} \neq 0\}$ and $$\rho_i(p)= \{(a,b): a+ib=\lambda_i(p) \text{ and } p_{ab} \neq 0\},$$ i.e., the terms in the leading part of the polynomial $p$ with weight $i.$  Let $H$ be a generalised H\'{e}non map of the form (\ref{e:ghm}) of degree $d_H$. We first note the following result, which is a rephrasing of Lemma 8.6.5 from \cite{UedaBook}. 

\smallskip\no\textbf{Result.} \textit{For a polynomial $p(x,y)=\sum_{a,b \in \mathbb{N}} p_{ab} x^a y^b$ the number of elements in the leading term of $p \circ H$ in $i$ weight, $i \ge 2$, i.e., $\sharp \rho_{i}(p \circ H)$ satisfies the following inequality
\[\sharp \rho_{i}(p \circ H) \le 1+ \frac{\sharp \rho_{d_H}(p)-1}{i}.\]
}
So if $\h$ is a map of the form (\ref{e:fghm}), of degree sufficiently large, it follows from the above result, that the number of leading terms in any weight $i$, $i \ge 2$ of the polynomial $p \circ \h$ is 1. Now if $H$ is a generalised H\'{e}non map of form (\ref{e:ghm}), then the degree of $p \circ \h \circ H(x,y) $ is $\lambda_{d_H}(p \circ \h)$ and $\rho_{d_H}(p \circ \h)=c_{p_\h} x^a y^b$ where $a+d_H b=\lambda_{d_H}(p \circ \h).$ Hence $p \circ \h\circ H(x,y)=c_{p_\h}c'_Hy^{\lambda_{d_H}}+ \text{l.o.t.}$ Since for very $h \in \G_k$ degree of $h$ is greater than $2^k$, from Lemma 8.6.6 of \cite{UedaBook} there exists $k_0 \ge 1$ such that the polynomial $p \circ h$ has the desired form (\ref{e:polynomial form}) whenever $h \in \G_k$, $k \ge k_0.$
 
\smallskip\no Thus, \textit{Case 1} applied to every polynomial $p_h$, $h \in \G_{k_0}$, proves Corollary \ref{c:result 4} for $\mu_\G^+.$
\end{proof}	

\section{Green's functions for non-autonomous sequences in $\s$}\label{s:6}
  Let $\seq{h} \subset \s$, where $\s$ is the semigroup of H\'{e}non maps as defined in (\ref{e:S}). Recall that to study dynamics of the sequence  $\seq{h}$, one needs to study the behaviour of the sequences $\{h(k)\}$ and $\{h^{-1}(k)\}$ defined as
  \[h(k):=h_k \circ \cdots \circ h_1 \text{ and }h^{-1}(k)=h_k^{-1} \circ \cdots \circ h_1^{-1}.\] 
  Now as each $h_i, i \ge 1$, is generated by the finitely many elements of $\G$, there exist a sequence $\seq{\tilde{h}} \subset \G$ and a sequence $\seq{n}$ of positive integers such that for every $k \ge 1$
  \begin{align}\label{e:equiv}
  h(k)=h_k \circ \cdots \circ h_1=\tilde{h}_{n_k}\circ \cdots \circ \tilde{h}_1=\tilde{h}(n_k). 	
  \end{align}
  Hence with abuse of notation, we will assume that $\seq{h} \subset \G$, i.e., the elements of the sequence $\seq{h}$ varies within the finite collection $\G=\{\h_i: 1 \le i \le n_0\}.$ Also analogue of the (positive and negative) escaping sets and the bounded sets for the sequence $\seq{h}$ is defined as
 \[
  	\U^+_{\seq{h}}=\{z \in \C^2: h(k)(z) \to \infty \text{ as }k \to \infty\},
  	\U^-_{\seq{h}}=\{z \in \C^2: h^{-1}(k)(z) \to \infty \text{ as }k \to \infty\}
  \]
and
\begin{align*}
  	\K^+_{\seq{h}}&=\{z \in \C^2: \{h(k)(z)\} \text{ is bounded}\}, \;\K^-_{\seq{h}}&=\{z \in \C^2:\{h^{-1}(k)(z)\} \text{ is bounded}\}.
  \end{align*}
Since $h(k) \in \G_k$, by Remark \ref{r:filtration estimate} the following inequality holds for every $(x,y) \in V_{R}^+$, $R>R_\s$ (sufficiently large)
 \begin{align}\label{e: NA filtration+}
	m \abs{y}^{d_k}<\norm{h_k(x,y)}=\abs{\pi_2 \circ h_k(x,y)}<M\abs{y}^{d_k},
\end{align}
 where $d_k$ is the degree of $h_k.$ Also for $(x,y) \in V_{R}^-$
 \begin{align}\label{e:NA filtration-}
	m \abs{x}^{d_k}<\norm{h_k^{-1}(x,y)}=\abs{\pi_1 \circ h_k^{-1}(x,y)}<M\abs{x}^{d_k}.
 \end{align}
 Also $h(k)({V_{R}^+}) \subset \I{V_{R_k}^+}$ and $h(k)^{-1}({V_{R}^-}) \subset \I{V_{R_k}^-}$ where $R_k \to \infty$ as $k \to \infty.$ 
\begin{rem}\label{r:NA K+ closed} 
 Thus $\I{V_R^\pm} \subset \U_{\seq{h}}^\pm$. Also we enlist the following observations on the escaping and non-escaping sets, which follows from the same arguments as in Proposition \ref{p:K+ closed}.
 \begin{itemize}[leftmargin=14pt]
 	\item $\displaystyle  \U^+_{\seq{h}}=\bigcup_{k=0}^\infty h(k)^{-1}\Big(\I{V_R^+}\Big)$ and $\displaystyle  \U^-_{\seq{h}}=\bigcup_{k=0}^\infty (h^{-1}(k))^{-1}\Big(\I{V_R^-}\Big).$
 	\item $\K_{\seq{h}}^\pm$ are closed subsets of $\C^2$ and $\K_{\seq{h}}^\pm=\C^2 \setminus \U_{\seq{h}}^\pm.$
 	\item $\U_{\s}^\pm \subset \U_{\seq{h}}^\pm \subset \uo^\pm_{\s}$ and  $\ko_{\s}^\pm \subset \K_{\seq{h}}^\pm \subset \K^\pm_{\s} \subset V_R \cup V_R^\mp.$ 
 \end{itemize}
\end{rem}
 Now as in Section \ref{s:2}, consider the following sequences of plurisubharmonic functions on $\C^2$
\begin{align}\label{e:NA Green sequence}	
	\Gr_k^+(z)=\frac{1}{\du_k} \log^+\|h(k)(z)\| \text{ and }\Gr_k^-(z)=\frac{1}{\du_k}  \log^+\|h^{-1}(k)(z)\|,
 \end{align}
 where $\du_k=d_1\hdots d_k$ is the degree of $h(k).$ Then, we have an analogue to Theorem \ref{t:result 1} here. 
 \begin{thm}\label{t: NA Green function}
	The sequences of functions $\{\Gr_k^\pm\}$ converges pointwise to a plurisubharmonic continuous functions $\Gr_{\seq{h}}^\pm$ on $\C^2$, respectively. Further, $\Gr^\pm_{\seq{h}}$ is pluriharmonic on $\U_{\seq{h}}^\pm$ and $\I{\K_{\seq{h}}^\pm}$.
 \end{thm}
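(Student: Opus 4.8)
The plan is to carry over the six-step scheme of the proof of Theorem~\ref{t:result 1} to the single orbit $\{h(k)\}$, where the bookkeeping becomes much lighter because no cardinality estimates on words of a given length are needed. Write $\sigma^{n}\seq{h}=(h_{n+1},h_{n+2},\dots)$ for the shifted sequence, so that $h(k)=(h_k\circ\cdots\circ h_{n+1})\circ h(n)$ and the degree of $h_k\circ\cdots\circ h_{n+1}$ is $\du_k/\du_n$; this yields the exact relation $\Gr^{+}_{\seq{h}}(z)=\du_{n}^{-1}\,\Gr^{+}_{\sigma^{n}\seq{h}}\big(h(n)(z)\big)$ wherever the limits exist, and Step~1 will establish existence uniformly over all shifts.

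First I would treat $V_R^+$ with $R>R_\s$. Since $h(k)(V_R^+)\subset\I{V_R^+}$, on $V_R^+$ one has $\Gr^+_k(x,y)=\du_k^{-1}\log|\pi_2\circ h(k)(x,y)|$, so $\Gr^+_k$ is pluriharmonic there; feeding $h(k)(x,y)\in V_R^+$ into (\ref{e: NA filtration+}) for $h_{k+1}$ and dividing by $\du_{k+1}=d_{k+1}\du_k$ gives $|\Gr^+_{k+1}-\Gr^+_k|\le M_0/\du_{k+1}\le M_0 2^{-(k+1)}$ on $V_R^+$, with $M_0=\max\{|\log m|,|\log M|\}$ and $d_i\ge2$. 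Hence $\{\Gr^+_k\}$ is uniformly Cauchy on $V_R^+$, the limit is pluriharmonic, and $\Gr^+_{\seq{h}}(x,y)=\log|y|+O(1)$ with the $O(1)$ bounded by $M_0$ uniformly over all shifted sequences (same $m,M$ from Remark~\ref{r:filtration estimate}). Next, on a compact $C\subset\U_{\seq{h}}^+$, Remark~\ref{r:NA K+ closed} and compactness give $N_C$ with $h(k)(C)\subset V_R^+$ for $k\ge N_C$, and the displayed relation reduces the convergence on $C$ to Step~1 for $\sigma^{N_C}\seq{h}$, giving a pluriharmonic limit; on a compact $C\subset\K_{\seq{h}}^+$, the non-autonomous analogue of Lemma~\ref{l:step 4} (if $h(k_n)(z_n)\in V_R^-$ with $z_n\in C$ and $k_n\to\infty$, then $z_n\in h(k_n)^{-1}(V_R^-)$, forcing $\|z_n\|\to\infty$ by Lemma~\ref{l:filtration}) gives $k_C$ with $h(k)(C)\subset V_R$ for $k\ge k_C$, so $0\le\Gr^+_k\le 2^{-k}\log^+R\to0$ uniformly on $C$. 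Since $\C^2=\U_{\seq{h}}^+\sqcup\K_{\seq{h}}^+$ with $\U_{\seq{h}}^+$ open, the pointwise limit $\Gr^+_{\seq{h}}$ exists on $\C^2$, vanishes on $\K_{\seq{h}}^+$, and is pluriharmonic on $\U_{\seq{h}}^+$ and on $\I{\K_{\seq{h}}^+}$.

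The remaining, and main, point is continuity at $\partial\K_{\seq{h}}^+=\partial\U_{\seq{h}}^+$. I would argue as in Step~6 of Theorem~\ref{t:result 1}: if $z_j\to z_0\in\partial\K_{\seq{h}}^+$ with $\Gr^+_{\seq{h}}(z_j)\ge c>0$, then $z_j\in\U_{\seq{h}}^+$, so by Remark~\ref{r:NA K+ closed} there is a least entry time $n_j$ with $h(n_j)(z_j)\in\I{V_R^+}$. If $\{n_j\}$ had a bounded subsequence we could take $n_j\equiv N$, so $h(N)(z_0)\in\overline{\I{V_R^+}}=V_R^+\subset\U_{\sigma^N\seq{h}}^+$ and hence $z_0\in\U_{\seq{h}}^+$, contradicting $z_0\in\partial\U_{\seq{h}}^+$. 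So $n_j\to\infty$, and it suffices to see $\|h(n_j)(z_j)\|$ stays bounded, since then $\Gr^+_{\seq{h}}(z_j)=\du_{n_j}^{-1}\Gr^+_{\sigma^{n_j}\seq{h}}(h(n_j)(z_j))\le 2^{-n_j}(\log\|h(n_j)(z_j)\|+M_0)\to0$, a contradiction. At time $n_j-1$ the orbit lies in $V_R\cup V_R^-$; the case $h(n_j-1)(z_j)\in V_R$ is immediate, while if $h(n_j-1)(z_j)\in V_R^-$ then $h_m^{-1}(V_R^-)\subset V_R^-$ forces the whole earlier orbit, hence $z_j$, into $V_R^-$, and (\ref{e:NA filtration-}) applied to the inverses shows the log-norm \emph{contracts} along $V_R^-$, $\log\|h(m)(z_j)\|\le d_m^{-1}\log\|h(m-1)(z_j)\|+d_m^{-1}|\log m|$, so iterating down to $z_j$ gives $\log\|h(n_j-1)(z_j)\|\le 2^{-(n_j-1)}\log^+\|z_j\|+|\log m|$, bounded since $z_j\to z_0$. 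This proves continuity on $\C^2$; being continuous (so equal to its u.s.c.\ regularisation), pluriharmonic on $\U_{\seq{h}}^+\cup\I{\K_{\seq{h}}^+}$, and nonnegative with $\Gr^+_{\seq{h}}\equiv0$ on $\partial\K_{\seq{h}}^+$, $\Gr^+_{\seq{h}}$ satisfies the sub-mean-value inequality at every point of $\C^2$ and hence is plurisubharmonic. The assertions for $\Gr^-_{\seq{h}}$ follow by running the same argument for the sequence $(h_1^{-1},h_2^{-1},\dots)$ with the roles of $V_R^+$ and $V_R^-$ interchanged.

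The hard part will be precisely this boundary continuity, and within it the subcase of an escaping orbit that lingers in $V_R^-$ before crossing into $V_R^+$: the resolution exploits that $V_R^-$ is exactly the region where the inverse maps expand, so the forward maps are log-norm contractions there, and the orbit cannot have built up large size by the time it enters $V_R^+$.
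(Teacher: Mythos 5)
Your proposal is correct, and its first two stages coincide with the paper's proof: the telescoping filtration estimate on $V_R^\pm$ (the paper's \textit{Step 1}) and the reduction of convergence on $\U_{\seq{h}}^\pm$ to shifted sequences (the paper's functions $\Gr_k^{\ell_0}$ are exactly your shifted Green's functions up to the factor $\du_{\ell_0}^{-1}$), as is the final passage from continuity plus pluriharmonicity off $\partial\K_{\seq{h}}^\pm$ to plurisubharmonicity. The genuine divergence is in the continuity argument at $\partial\K^+_{\seq{h}}$. The paper takes $z_n\to z_0$ with $\Gr^+_{\seq{h}}(z_n)>c$, first excludes late visits of the orbits of the $z_n$ to $V_R^-$ by the compactness trick of Lemma \ref{l:step 4} (boundedness of $\{z_n\}$ against $V_{R_k}\cap h(k)^{-1}(V_R^-)=\emptyset$ from Lemma \ref{l:filtration}), then shows a late first entry into $V_R^+$ from $V_R$ would force $\Gr^+_{\seq{h}}(z_n)<c$, so all orbits lie in $V_R^+$ at one common fixed time $k_0$; continuity of $h(k_0)$ then puts $h(k_0)(z_0)$ into $V_{R+1}^+$, contradicting $z_0\in\K^+_{\seq{h}}$. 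You instead work with the first entry times $n_j$ into $\I{V_R^+}$: you show $n_j\to\infty$ (else $z_0\in\U^+_{\seq{h}}$ by continuity at a fixed time), and you bound the entry norm either trivially (previous point in $V_R$) or, when the orbit lingers in $V_R^-$, via the backward invariance $h^{-1}(V_R^-)\subset V_R^-$ together with the observation that (\ref{e:NA filtration-}) makes the forward maps contract $\log\norm{\cdot}$ along $V_R^-$; the value then decays like $2^{-n_j}$, contradicting $\Gr^+_{\seq{h}}(z_j)\ge c$. Both arguments are sound and rest on the same ingredients (filtration, boundedness of images of $V_R$, and $\Gr^+=\log|y|+O(1)$ on $V_R^+$ uniformly over shifts); the paper's route is a bit shorter because the $V_R^-$ case is dispatched wholesale and the contradiction lands on a fixed-time orbit position rather than on the value of the Green's function, while your route yields the extra quantitative statement that the value is $O(2^{-n})$ in the entry time, and your log-contraction estimate on $V_R^-$ is a correct point not used in the paper.
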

The proof of the above theorem and other important results\,---\,obtained in this section\,---\,are essentially revisiting the techniques discussed through sections \ref{s:3}, \ref{s:4} and \ref{s:5}, in the current non-autonomous dynamical setup. Hence the presentations will be mostly brief and sketchy. Also, note that Remark \ref{e:NA Green sequence} and the definition of functions $\seq{\Gr^\pm}$ above, is valid for any non-autonomous sequence $\seq{h}$ of H\'{e}non maps, satisfying the identities (\ref{e: NA filtration+}) and (\ref{e:NA filtration-}).

\begin{proof}
\textit{Step 1:} The sequence of functions $\seq{\Gr^+}$ converges uniformly on compact subsets of $V_{R}^+$ and the sequence of functions $\seq{\Gr^-}$ converges uniformly on compact subsets of $V_{R}^-.$ 

\smallskip\no From the filtration identity (\ref{e: NA filtration+}) it follows that for $(x,y) \in V_{R}^+$
	\begin{align*}
		\Gr_{k-1}^+(x,y)+\frac{\log m}{\du_k} \le \Gr_{k}^+(x,y) \le 	\Gr_{k-1}^+(x,y)+\frac{\log M}{\du_k}.
	\end{align*}
As $\du_k \ge 2^k$ for every $k\ge 1$, we have
\[\big|\Gr_{k-1}^+(x,y)-\Gr_k^+(x,y)\big| \le \frac{\widetilde{M_0}}{2^k},\]	where $\widetilde{M_0}=\max\{|\log m|, |\log M|\}.$ Thus for a given $\epsilon>0$ there exists $m,n \ge 1$, sufficiently large,	$\abs{\Gr_m^+-\Gr_n^+} \le \epsilon$ on $V_R^+$.  A similar argument works on $V_{R}^-$.

\smallskip\noindent\textit{Step 2:} The sequence of functions $\seq{\Gr^+}$ converges uniformly on compact subsets of $\U_{\seq{h}}^+$ and the sequence of functions $\seq{\Gr^-}$ converges uniformly on compact subsets of $\U_{\seq{h}}^-.$

\smallskip\noindent Note that by Remark \ref{r:NA K+ closed}, for a given compact set $C \subset \U_{\seq{h}}^+$, there exists $\ell_C \ge 1$, large enough such that $h(\ell_C)(C) \subset V_R^+$. Thus by similar argument as above, for $(x,y) \in C$
\[\big|\Gr_{k-1}^+(h(\ell_C)(x,y))-\Gr_{k}^+(h(\ell_C)(x,y))\big| \le \frac{\widetilde{M_0}}{2^k}.\]
	Now for a fixed $\ell_0 \ge 1 $ and $k>1$ consider the sequence of functions defined as
\[\Gr_{k}^{\ell_0}(z)=\frac{\du_{\ell_0}}{\du_{k+\ell_0}} \log^+\|h(k+\ell_0)h(\ell_0)^{-1}(z)\|\]
As $h(k+\ell_0)h(\ell_0)^{-1}=h_{k+\ell_0}\circ\cdots\circ h_{1+\ell_0}$, the functions $\{\Gr_k^{\ell_0}\}$ are pluriharmonic on $V_R^+$, by the same argument as for $\{\Gr_k^+\}$. Since $C \subset h(\ell_C)^{-1}(V_R^+)$, also by the filtration identity (\ref{e: NA filtration+})
\[\big|\Gr_{k-1}^{\ell_C}(h(\ell_C)(x,y))-\Gr_{k}^{\ell_C}(h(\ell_C)(x,y))\big| \le \frac{\widetilde{M_0}}{2^k}\]
for every $(x,y)\in C$. Note that 
\[\Gr_{k+\ell_C}^+(x,y)=\frac{\Gr_{k}^{\ell_C}\big(h(\ell_C)(x,y)\big)}{\du_{\ell_C}}.\]
Hence for $k\ge 1$, sufficiently large and $(x,y) \in C$
\[\big|\Gr_{k+\ell_C-1}^{+}(x,y)-\Gr_{k+\ell_C}^{+}(x,y)\big| \le \frac{1}{\du_{\ell_C}} \frac{\widetilde{M_0}}{2^k}.\]

Thus $\{\Gr_k^+\}$ converges to a pluriharmonic function on $h(\ell_C)^{-1}(V_R^+)$. Hence the function $\Gr^+_{\seq{h}}$ is pluriharmonic on $\U_{\seq{h}}^+$. A similar proof works for $\Gr^-_{\seq{h}}$ and $\U_{\seq{h}}^-$.

\smallskip\noindent\textit{Step 3:} 
Let $\displaystyle \Gr_{\seq{h}}^\pm:=\lim_{k \to \infty} \Gr_k^\pm$, the pointwise limits of $\{\Gr^\pm_k\}$. Then both the limit functions $\Gr_{\seq{h}}^\pm$ are continuous and plurisubharmonic on $\C^2.$

\smallskip\noindent
To complete the above, we first prove that $\Gr^+_{\seq{h}}$ is continuous on $\C^2$, in particular it is continuous on $\partial \K_{\seq{h}}^+.$ Suppose not, then there exist a point $z_0 \in \partial\K^+_{\seq{h}}$ and a sequence $\{z_n\} \in \U^+_{\seq{h}}$ such that $z_n \to z_0$ such that $\Gr^+_{\seq{h}}(z_n)>c>0$ for every $n \ge 1.$ Let $B=\max\{\log^+\norm{\h_i(z)}: z \in \overline{V_{R}} \text{ and } 1 \le i \le n_0\}.$ 
Also let $\ku_1 \ge 1$, sufficiently large, such that \[\frac{c}{2}>\frac{\widetilde{M}}{2^{\ku_1}}\] where $\widetilde{M}:=\max\{\widetilde{M_0}, B\}$. Further note that there exists $\ku_2 \ge 1$ such that $h(k)(z_n) \in V_{R} \cup V_{R}^+$ for $k \ge \ku_2.$ If not, then there exists a subsequence $\{k_n\}$ of positive integers diverging to infinity and a subsequence $\{z_{l_n}\}$ of $\{z_n\}$ such that $h(k_n)(z_{l_n}) \in V_{R}^-$, i.e., $z_{l_n} \in h(k_n)^{-1}(V_{R}^-).$ Hence $z_{l_n} \notin \I{V_{R_{k_n}}}$. As $R_{k_n} \to \infty$, this would mean $\norm{z_{l_n}}\to \infty$, which is a contradiction! 

\smallskip\noindent
\textit{Claim:} The sequence $\{h(k)(z_{n})\} \in V_{R}^+$ whenever $k \ge \max\{\ku_1,\ku_2\}$. Suppose not, then there exist $k_l \ge \max\{\ku_1,\ku_2\}$ and $z_l \in \seq{z}$, such that $h(k_l)(z_{l}) \in V_{R}.$ Also from (\ref{e: NA filtration+}), $h(k)(z_l) \notin V_{R}^+$ for every $k \le k_l.$ Let $\ku_l >k_l$ be the minimum positive integer such that $h(\ku_l)(z_l) \in V_{R}^+$, i.e., $\norm{h(\ku_l)(z_l)} \le B$ and $h(k)(z_l) \in V_{R}^+$ for every $k \ge \ku_l.$ Hence for every $k >\ku_l$
	\[ \Gr_k^+(z_l)\le \frac{\log^+\norm{h(\ku_l)(z_l)}}{\du_{\ku_l}}+\widetilde{M_0}\sum_{i=\ku_l+1}^k\frac{1}{\du_i} \le \widetilde{M}\sum_{i=\ku_l}^k\frac{1}{\du_i}.\]
Since $\du_i \ge 2^i$ and $\ku_l-1 \ge \ku_1$, the above simplifies to
	\[ \Gr_k^+(z_l)\le\widetilde{M}\sum_{i=\ku_l}^k\frac{1}{2^i} \le \frac{\widetilde{M}}{2^{\ku_l-1}}\le \frac{\widetilde{M}}{2^{\ku_1}}<\frac{c}{2}.\]	So $\Gr_{\seq{h}}^+(z_l)<c$, which is a contradiction to the assumption. Thus the claim follows.

	\smallskip\noindent 
As $z_0 \in \partial \K^+_{\seq{h}} \subset \K^+_{\seq{h}}$ (since it is closed), by Lemma \ref{l:step 4} and Remark \ref{r:step 4} there exists $\ku_0 \ge 1$ such that $\norm{h(k)(z_0)} \le R$ for every $k \ge \ku_0.$ Also, from the above \textit{Claim} and (\ref{e: NA filtration+}) we may fix a $k_0 > \max\{\ku_0,\ku_1,\ku_2\}$ such that $h(k_0)(z_n) \in \I{V_{R+1}^+}.$ Since $z_n \to z_0$, by continuity of $h(k_0)$ we have $h(k_0)(z_0) \in V_{R+1}^+$, i.e., $\norm{h(k_0)(z_0)} \ge R+1 >R$, which is not possible (as $z_0\in \K^+_{\seq{h}}$). Thus $\Gr^+_{\seq{h}}$ is continuous on $\C^2.$

	\smallskip 
Now $\Gr^+_{\seq{h}}$ is pluriharmonic on $U_{\seq{h}}^+$, and is identically zero, i.e, is also pluriharmonic in the interior of $K_{\seq{h}}^\pm$ (provided it is non-empty). Also it is continuous on $\C^2$, hence the upper semi-continuous regularisation of $\Gr^+_{\seq{h}}$ on $\C^2$ matches with itself and \textit{Step 2} holds.

\smallskip\noindent A similar argument will work for $\Gr^-_{\seq{h}}$, which completes the proof.
\end{proof}
\begin{cor}
There exist constants $c_{\seq{h}}^\pm \in \mathbb{R}$ such that for $(x,y)\in V_R^\pm$ (respectively),
\begin{align*}
 	\Gr^+_{\seq{h}}(x,y)=\log|y|+O(1) \text{ and }
\Gr^-_{\seq{h}}(x,y)=\log|x|+O(1).
\end{align*}

\end{cor}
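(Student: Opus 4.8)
The plan is to reproduce the argument of Corollary \ref{c:Green constant}, replacing the autonomous recursion (\ref{e:recursive_green}) by its non-autonomous counterpart, which has in fact already been extracted inside the proof of Theorem \ref{t: NA Green function}. Recall from \textit{Step 1} there that, on $V_R^+$, the filtration estimate (\ref{e: NA filtration+}) together with $h(k)(V_R^+)\subset\I{V_R^+}$ gives $\Gr_k^+(x,y)=\du_k^{-1}\log|\pi_2\circ h(k)(x,y)|$ for every $k\ge 0$ and
\[
\Gr_{k-1}^+(x,y)+\frac{\log m}{\du_k}\ \le\ \Gr_k^+(x,y)\ \le\ \Gr_{k-1}^+(x,y)+\frac{\log M}{\du_k},
\]
so that $\big|\Gr_k^+(x,y)-\Gr_{k-1}^+(x,y)\big|\le \widetilde{M_0}/\du_k\le \widetilde{M_0}/2^k$ for all $k\ge 1$ and all $(x,y)\in V_R^+$, with $\widetilde{M_0}=\max\{|\log m|,|\log M|\}$ the constant from Remark \ref{r:filtration estimate}. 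Here the base term is $\Gr_0^+(x,y)=\log^+\|(x,y)\|=\log|y|$, using $|y|\ge R>R_\s>1$ on $V_R^+$.

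First I would telescope the bound: for $(x,y)\in V_R^+$,
\[
\Big|\Gr_{\seq{h}}^+(x,y)-\log|y|\Big|
=\Big|\sum_{k\ge 1}\big(\Gr_k^+(x,y)-\Gr_{k-1}^+(x,y)\big)\Big|
\le \widetilde{M_0}\sum_{k\ge 1}\frac{1}{2^k}=\widetilde{M_0},
\]
which is exactly $\Gr_{\seq{h}}^+(x,y)=\log|y|+O(1)$, the $O(1)$ term being the constant $c_{\seq{h}}^+$ and bounded by $\widetilde{M_0}$ uniformly in $(x,y)\in V_R^+$ --- and, since $m,M,\widetilde{M_0}$ depend only on $\G$, uniformly over all admissible sequences $\seq{h}\subset\G$ as well. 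Running the mirror-image computation on $V_R^-$, using the estimate (\ref{e:NA filtration-}), $\Gr_k^-(x,y)=\du_k^{-1}\log|\pi_1\circ h^{-1}(k)(x,y)|$, and $\Gr_0^-(x,y)=\log|x|$, yields $\Gr_{\seq{h}}^-(x,y)=\log|x|+O(1)$ on $V_R^-$.

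There is no genuine obstacle: every analytic ingredient --- the uniform filtration bounds (\ref{e: NA filtration+}) and (\ref{e:NA filtration-}), the pointwise convergence $\Gr_k^\pm\to\Gr_{\seq{h}}^\pm$, and the continuity of the limits --- is already in hand from Theorem \ref{t: NA Green function}. The only thing to watch is the bookkeeping at the base of the telescope, i.e.\ that $\Gr_0^\pm$ genuinely equals $\log|y|$ on $V_R^+$ (resp.\ $\log|x|$ on $V_R^-$) and not merely $\log^+$ of a norm that might vanish; this is immediate since $R>1$. The geometric tail $\sum 2^{-k}$ then makes the resulting $O(1)$ independent of the point and of the non-autonomous sequence, which is all that is claimed.
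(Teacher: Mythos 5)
Your proposal is correct and follows essentially the same route as the paper, which simply invokes the argument of Corollary \ref{c:Green constant}: telescope the non-autonomous recursive estimate $|\Gr_k^+-\Gr_{k-1}^+|\le \widetilde{M_0}/\du_k\le \widetilde{M_0}/2^k$ from \textit{Step 1} of Theorem \ref{t: NA Green function} starting at $\Gr_0^+=\log|y|$ on $V_R^+$ (and the mirror estimate on $V_R^-$). Your additional remarks on the base case and the uniformity of the $O(1)$ constant in the sequence $\seq{h}$ are accurate but not needed beyond what the paper records.
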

\begin{proof} The proof is same as the proof of Corollary \ref{c:Green constant}.
 \end{proof}
\begin{lem}\label{l:NA uniform convergence}
The sequences $\{\Gr_k^\pm\}$ converges uniformly on compact subsets of $\C^2$ to $\Gr_{\seq{h}}^\pm$, respectively.
 \end{lem}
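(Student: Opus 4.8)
Lemma \ref{l:NA uniform convergence} asserts uniform convergence of $\{\Gr_k^\pm\}$ to $\Gr_{\seq{h}}^\pm$ on compacts of $\C^2$, which is the non-autonomous analogue of Lemma \ref{l:uniform convergence}. The plan is to mimic the proof of that lemma, replacing the family $\G_k$ of compositions of length $k$ by the single composition $h(k)$, and correspondingly replacing the sums $\frac{1}{D^k}\sum_{h\in\G_k}$ by the single term $\frac{1}{\du_k}$. Since the combinatorial bookkeeping (counting $\sharp\G_k^b$, etc.) collapses to a statement about one orbit, the argument actually simplifies.

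First I would set $R>R_\s$ and fix a compact set $C\subset\C^2$. The three cases from Lemma \ref{l:uniform convergence} carry over verbatim: if $C\subset\I{\K_{\seq{h}}^+}$ or $C\subset\K_{\seq{h}}^+$ with bounded orbit then (by Lemma \ref{l:step 4} applied to $h(k)$, which lies in $\G_k$) we have $h(k)(C)\subset V_R$ for $k$ large, so $\Gr_k^+|_C\le 2^{-k}\log R\to 0=\Gr_{\seq{h}}^+|_C$; if $C\subset\U_{\seq{h}}^+$, then \textit{Step 2} of the proof of Theorem \ref{t: NA Green function} already gives uniform convergence on $C$. So the only real content is the mixed case $C\cap\U_{\seq{h}}^+\neq\emptyset$ and $C\cap\K_{\seq{h}}^+\neq\emptyset$. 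Here I would use the continuity of $\Gr_{\seq{h}}^+$ on $\C^2$ (established in Theorem \ref{t: NA Green function}) exactly as in Lemma \ref{l:uniform convergence}: decompose $C\cap\U_{\seq{h}}^+$ using the exhaustion $\U_{\seq{h}}^+=\bigcup_k h(k)^{-1}(\I{V_R^+})$ (Remark \ref{r:NA K+ closed}), set $C_k=C\cap\big(h(k)^{-1}(\I{V_R^+})\setminus h(k-1)^{-1}(\I{V_R^+})\big)$, and observe that for $z\in C_k$ with $k$ large, Lemma \ref{l:step 4} gives $h(k-1)(z)\in V_R$, so $\|h(k)(z)\|\le B:=\sup\{\|\h_i(w)\|:w\in\overline{V_{R+1}},\,1\le i\le n_0\}$, whence for $l>k$, $\Gr_l^+(z)\le \du_k^{-1}\cdot\du_k\cdot(\log B)/\du_k\cdot(\text{bounded factor})\le \widetilde M/2^{k-1}$. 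Combined with the uniform bound $|\Gr_{\seq{h}}^+|<\ep/2$ on a neighbourhood $W$ of $C\cap\partial\K_{\seq{h}}^+$ (continuity), one tails off the pieces $C_k$ with $k$ large inside $W$, and handles the finitely many remaining $C_k$ — a compact subset of $\U_{\seq{h}}^+$ — by \textit{Step 2}.

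The main obstacle is purely organizational rather than conceptual: one must check that the estimate $\Gr_l^+(z)\le\widetilde M/2^{k-1}$ for $z\in C_k$, $l>k$, really follows from the filtration identity (\ref{e: NA filtration+}) by the same telescoping $\sum_i \du_i^{-1}\le\sum_i 2^{-i}$ used in \textit{Step 1} of Theorem \ref{t: NA Green function}, once we know the orbit enters $V_R^+$ and stays there; this is where the hypothesis $h(k-1)(z)\in V_R$ and hence $h(k)(z)$ lands in the (closed) $V_R^+$-region near $\partial V_R$ is used — and it is precisely the content already extracted in the \textit{Claim} inside the proof of Theorem \ref{t: NA Green function}. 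After that, the $\ep/2+\ep/2$ bookkeeping and the finite subcover argument are identical to Lemma \ref{l:uniform convergence}. I would therefore write the proof as: ``The proof is a verbatim adaptation of the proof of Lemma \ref{l:uniform convergence}, using the exhaustion of $\U_{\seq{h}}^+$ in Remark \ref{r:NA K+ closed}, the continuity of $\Gr_{\seq{h}}^\pm$ from Theorem \ref{t: NA Green function}, and the estimates in its \textit{Step 1} and \textit{Claim}; the sums over $\G_k$ are replaced throughout by the single orbit $h(k)$,'' and then spell out the one nontrivial estimate above.
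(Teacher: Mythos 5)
Your proposal is correct and follows essentially the same route as the paper: the same case split (compacts in $\K_{\seq{h}}^+$ via Lemma \ref{l:step 4}, compacts in $\U_{\seq{h}}^+$ via \textit{Step 2} of Theorem \ref{t: NA Green function}, and the mixed case via the decomposition $C_k=C\cap\big(h(k)^{-1}(\I{V_R^+})\setminus h(k-1)^{-1}(\I{V_R^+})\big)$, the bound $\Gr_l^+\le\widetilde M/2^{k-1}$ on $C_k$, and continuity of $\Gr_{\seq{h}}^+$ near $\partial\K_{\seq{h}}^+$). The paper's proof is exactly this adaptation of Lemma \ref{l:uniform convergence}, so no further comment is needed.
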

 \begin{proof}
The proof is again completely similar to the proof of Lemma \ref{l:uniform convergence}, however, we revisit the steps briefly. Note that if $C \subset \U_{\seq{h}}^+$ then the uniform convergence is immediate, as  the sequence $\seq{\Gr^+}$ is uniformly Cauchy on $\U_{\seq{h}}^+$, by the proof of Theorem \ref{t: NA Green function}.

\smallskip  Next, let $C \subset \K_{\seq{h}}^+$ then $h(k) \in \G^b_k(z)$ for every $z \in C$ and $k\ge 1$, where $\G^b_k(z)$ is as introduced in the \textit{Step 3} of the proof of Theorem \ref{t:result 1}, and hence by Remark \ref{r:step 4}, there exists a positive integer $\ku_0(C)\ge 1$ such that $h(k)(C)\subset V_R$ for every $k \ge \ku_0$. Thus ${\Gr_{k}^+}_{|C}\le \frac{R}{2^k}$, which proves the uniform convergence in this case.

\smallskip Finally, let $C$ intersects both $\K_{\seq{h}}^+$ and $\U_{\seq{h}}^+$, then the uniform convergence is immediate from above on $C \cap \K^+_{\seq{h}}$, i.e., for a given $\epsilon>0$, there exists $\ku_0 \ge 1$ such that $|\Gr_k^+(z)-\Gr_{\seq{h}}^+(z)| \le \epsilon$ for every $k \ge \ku_0.$ Further by Lemma \ref{l:step 4} there exists $\ku_0(C) \ge 1$ such that $h(k)(C) \subset V_R \cup V_R^+$ for every $k \ge \ku_0(C)$. Note that  by (\ref{e:VR+}), 
\begin{align}\label{e:NA inclusion}
	h(k)^{-1}(V_{R}^+)=\ov{h(k)^{-1}(V_{R}^+)} \subset \I{h(k+1)^{-1}(V_{R}^+)}.
\end{align}
Now as in the proof of Lemma \ref{l:uniform convergence} we define the following subsets of $C$ 
	\[ C_k=C \cap\big( h(k)^{-1}(\I{V_{R}^+}) \setminus  h(k-1)^{-1}(\I{V_{R}^+})\big) \text{ and } C_0=C \cap \I{V_{R}^+},\]
i.e., $\cup_{k=0}^\infty C_k=C \cap \U_{\seq{h}}^+.$ Since $C \cap \U_{\seq{h}}^+\neq \emptyset$, it follows from (\ref{e:NA inclusion}) that $C_k$'s are non-empty sets for $k \ge 1$, sufficiently large. Also, let $B=\max\{\|\h_i(z)\|: z \in V_{R+1}, 1 \le i \le n_0\}$ and $\tilde{C}_k=\cup_{i=k}^\infty C_i.$ Then for $z \in C_k$, $\Gr_l^+(z) \le \frac{\log B}{\du_l}$ whenever $\ku_0(C) \le l \le k.$ Now for $n \ge 1$
	\[ \Gr_{l+n}^+(z) \le \frac{\log B}{\du_k}+ \log{M} \sum_{i=1}^{n}\frac{1}{\du_{k+i}} \le \frac{2\widetilde{M}}{\du_k} \le \frac{\widetilde{M}}{2^{k-1}},\]
where $\widetilde{M}=\max\{|\log M|, |\log m|,|\log B|\}.$ Again, by the same arguments as in proof of Lemma \ref{l:uniform convergence}, i.e., by continuity of $\Gr^+_{\seq{h}}$ and the above, there exists $\ku_1\ge 1$ such that
	\[ |\Gr_k^+(z)-\Gr_{\seq{h}}^+(z)| < \ep\]
whenever $z \in \widetilde{C}_{\ku_1}$ and $k \ge \ku_1.$
Now, as 
\[(\U^+_{\seq{h}} \cap C)\setminus \widetilde{C}_{\ku_1} \subset C \cap h(\ku_1-1)^{-1}(V_R^+),\] and $C \cap h(\ku_1-1)^{-1}(V_R^+)$ is a compact set contained in $\U_{\seq{h}}^+$, there exists $\ku_2 \ge 0$ such that $|\Gr_k^+(z)-\Gr_{\seq{h}}^+(z)| \le \ep$ whenever $z \in (\U^+_{\seq{h}} \cap C)\setminus \widetilde{C}_{\ku_1}.$ Since $C=(C\cap \K^+_{\seq{h}}) \cup \widetilde{C}_{\ku_1} \cup ((\U^+_{\seq{h}} \cap C)\setminus \widetilde{C}_{\ku_1})$, for $k \ge\max\{\ku_0,\ku_1,\ku_2\}$, we have $\big|\Gr_k^+-\Gr_{\seq{h}}^+\big|_{C} \le \ep.$
\end{proof}
\begin{thm}\label{t: NA sequential convergence}
	For every $k \ge1$, let $G^\pm_{h(k)}$ denote the Green's function corresponding to the maps $h(k)$ and $h^{-1}(k).$ Then the sequence $\{G^\pm_{h(k)}\}$ converge uniformly to $\Gr_{\seq{h}}^\pm$, respectively, on compact subsets of $\C^2.$
 \end{thm}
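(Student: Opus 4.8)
The plan is to follow the proof of Theorem~\ref{t:convergence of mu} with the average over $\G_k$ replaced by the single element $h(k)\in\G_k$. Recall that $\Gr_k^+(z)=\du_k^{-1}\log^+\|h(k)(z)\|$ and $\Gr_k^-(z)=\du_k^{-1}\log^+\|h^{-1}(k)(z)\|$. By Lemma~\ref{l:NA uniform convergence} the sequences $\{\Gr_k^\pm\}$ already converge to $\Gr_{\seq{h}}^\pm$ uniformly on compact subsets of $\C^2$, so by the triangle inequality it is enough to prove that, for a fixed compact $C\subset\C^2$,
\[
\Big|G_{h(k)}^+(z)-\frac{\log^+\|h(k)(z)\|}{\du_k}\Big|\longrightarrow 0
\quad\text{uniformly for }z\in C ,
\]
together with the analogous negative estimate.

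First I would fix $R>R_\s$ and use Lemma~\ref{l:step 4} together with Remark~\ref{r:step 4} to pick $\ku_C\ge 1$ such that for every $k\ge\ku_C$ and every $z\in C$ one has $h(k)(z)\in V_R\cup\I{V_R^+}$, with $\|h(k)(z)\|\le R$ whenever $h(k)(z)\in V_R$. For such $k$ and $z$ I distinguish two cases, exactly as in Steps~1 and 2 of the proof of Theorem~\ref{t:convergence of mu}, applied to the single map $h=h(k)$, which has degree $\du_k\ge 2^k$. If $h(k)(z)\in\I{V_R^+}$, the filtration estimate~\eqref{e: NA filtration+} and the induction of Step~2 give
\[
\Big|G_{h(k)}^+(z)-\frac{\log\|h(k)(z)\|}{\du_k}\Big|\le \frac{2\widetilde{M}}{\du_k}\le \frac{\widetilde{M}}{2^{k-1}},
\]
where $\widetilde{M}$ is the constant from Section~\ref{s:4}. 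If instead $h(k)(z)\in V_R$, then either $z\in K_{h(k)}^+$, so $G_{h(k)}^+(z)=0$, or $z$ lies in the escaping set of $h(k)$, its forward $h(k)$-orbit eventually enters $V_R^+$, and the generator-level bookkeeping of Step~1 yields $G_{h(k)}^+(z)\le \widetilde{M}/2^{k-1}$; in either subcase also $\du_k^{-1}\log^+\|h(k)(z)\|\le 2^{-k}\log^+R$. Combining the cases gives $\big|G_{h(k)}^+-\Gr_k^+\big|_C\to 0$, hence $G_{h(k)}^+\to\Gr_{\seq{h}}^+$ uniformly on $C$. The negative statement follows by the symmetric argument, run with the inverse compositions $h^{-1}(k)$ and the filtration estimate~\eqref{e:NA filtration-}.

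The only delicate point is the bounded-type case $h(k)(z)\in V_R$: there $\du_k^{-1}\log^+\|h(k)(z)\|$ is automatically small, but estimating $G_{h(k)}^+(z)$ requires unwinding $h(k)=h_k\circ\cdots\circ h_1$ into its generalised H\'{e}non factors and locating, within the relevant iterate, the first factor that pushes the orbit into $V_R^+$, precisely as in Step~1 of Theorem~\ref{t:convergence of mu}; the quantitative input that keeps the resulting geometric series summable is $d_{h(k)}=\du_k\ge 2^k$. Everything else is a routine transcription of the estimates of Sections~\ref{s:3} and~\ref{s:4} to the unaveraged, non-autonomous setting, so I would keep the write-up brief.
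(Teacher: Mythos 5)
Your proposal is correct and follows essentially the same route as the paper: reduce via Lemma \ref{l:NA uniform convergence} and the triangle inequality to estimating $\big|G^+_{h(k)}-\Gr_k^+\big|$ on $C$, then split according to whether $h(k)(z)$ lands in $V_R$ or in $V_R^+$ (the paper does this by decomposing $C$ into $C_1=\K_{\seq{h}}^+\cap C$ and its complement) and invoke Steps 1 and 2 of the proof of Theorem \ref{t:convergence of mu} for the single map $h=h(k)$ of degree $\du_k\ge 2^k$, exactly as you describe, with the symmetric argument for the negative functions.
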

\begin{proof}
This proof is again similar to the proof of Theorem \ref{t:convergence of mu}. Let $C$ be a compact subset of $\C^2$, then let $C_1=\K_{\seq{h}}^+ \cap C$. Since $C_1$ is a compact set contained in $\K_{\seq{h}}^+$, by Lemma \ref{l:step 4} and Remark \ref{r:step 4} there exists $ \ku_{C_1} \ge 1$ such that $h(k)(z) \in V_R$ for $k \ge \ku_{C_1}$. Thus $h(k)\in \G^b_k(z)$ for $z \in C_1$ and $h(k)\in \G^u_k(\tilde{z})$ for $\tilde{z} \in C \setminus C_1$, whenever $k \ge \ku_{C_1}$. 

\smallskip
Now by \textit{Step 1} in the proof of Theorem \ref{t:convergence of mu} gives that, for a given $\ep>0$ there exists $\ku_{1} \ge \ku_{C_1}$ such that for $k \ge \ku_1$, 
\[\bigg|G^+_{h(k)}(z)-\frac{\log^+\norm{h(k)(z)}}{\du_k}\bigg|_{C_1}=|G^+_{h(k)}(z)-\Gr^+_k(z)|_{C_1}<\ep/2.\]
Also as $h(k) \in \G^u_k(\tilde{z})$ for $\tilde{z} \in C \setminus C_1$ and $k \ge \ku_1$, by \textit{Step 2} in the proof of Theorem \ref{t:convergence of mu}   
	\[\frac{\log\|h(k)(\tilde{z})\|}{\du_k} -\widetilde{M}\sum_{i=1}^{l-1}  \frac{1}{\du_k^i} \le \frac{\log\|h(k)^l(\tilde{z})\|}{\du_k^l} \le \frac{\log\|h(k)(\tilde{z})\|}{\du_k} +\widetilde{M}\sum_{i=1}^{l-1}  \frac{1}{\du_k^i},\]
	whenever $l \ge 2$. Hence there exists $\ku_2 \ge \ku_1$ such that for $k \ge \ku_2$
	\[\bigg|G^+_{h(k)}(z)-\frac{\log^+\norm{h(k)(z)}}{\du_k}\bigg|_{C\setminus C_1}=|G^+_{h(k)}(z)-\Gr^+_k(z)|_{C\setminus C_1}\le \frac{2\widetilde{M}}{\du_k}<\ep/2.\]

\smallskip
Finally, by Lemma \ref{l:NA uniform convergence}, there exists $\ku_C \ge \ku_2$ such that the theorem holds. A similar argument will work for $\{G_{h(k)}^-\}$ and $\Gr^-_{\seq{h}}.$
\end{proof}
Now $\Gr^\pm_{\seq{h}} \equiv 0$ in the $\I{\K_{\seq{h}}^\pm}$, provided it is non-empty, hence $\K_{\seq{h}}^\pm$ are pseudoconcave subsets of $\C^2$. Also, as an immediate corollary to Theorems \ref{t: NA Green function} and \ref{t: NA sequential convergence}, we have the initial statement of the following. 
 \begin{cor}\label{c:NA support}
The currents $\displaystyle \mu_{\seq{h}}^\pm:=\frac{1}{2\pi} dd^c(\Gr^\pm_{\seq{h}})$ are positive $(1,1)$ currents of mass 1, supported on $\J_{\seq{h}}^\pm=\partial \K_{\seq{h}}^\pm$, respectively. Also support of $ \mu_{\seq{h}}^\pm=\J_{\seq{h}}^\pm$ and $\mu_{\seq{h}}:=\mu_{\seq{h}}^+ \wedge \mu_{\seq{h}}^-$ is a compactly supported probability measure.
\end{cor}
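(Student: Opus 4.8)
The plan is to run the non-autonomous analogues of the arguments of Sections \ref{s:3}--\ref{s:5}. Since $\Gr^\pm_{\seq{h}}$ are plurisubharmonic on $\C^2$ by Theorem \ref{t: NA Green function}, the currents $\mu_{\seq{h}}^\pm=\frac{1}{2\pi}dd^c\Gr^\pm_{\seq{h}}$ are automatically positive and closed. For the mass, I would use Theorem \ref{t: NA sequential convergence}: the Green's functions $G^\pm_{h(k)}$ converge to $\Gr^\pm_{\seq{h}}$ uniformly on compact subsets of $\C^2$, so $\mu^\pm_{h(k)}=\frac{1}{2\pi}dd^c G^\pm_{h(k)}\to\mu^\pm_{\seq{h}}$ in the sense of currents (Corollary 3.6 of \cite{DemaillyBook}). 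Each $h(k)$ is itself a H\'enon map of the form (\ref{e:fghm}), so $\mu^\pm_{h(k)}$ has mass $1$; and since $\Gr^+_{\seq{h}}(x,y)=\log|y|+O(1)$ on $V_R^+$ (likewise $\Gr^-_{\seq{h}}$ on $V_R^-$), the functions $g^\pm_{\seq{h}}(z)=\Gr^\pm_{\seq{h}}(z)-\tfrac12\log(\norm{z}^2+1)$ are bounded quasi-potentials near the line at infinity and $\mu^\pm_{\seq{h}}$ extend to $\mathbb{P}^2$ as closed positive $(1,1)$-currents in the hyperplane class, exactly as in Remark \ref{r:projective currents}; hence no mass escapes to infinity and $\mu^\pm_{\seq{h}}$ have mass $1$. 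This part is immediate from the two theorems, as the excerpt announces.

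Next, $\text{supp}(\mu^+_{\seq{h}})\subseteq\partial\K_{\seq{h}}^+$ follows at once from Theorem \ref{t: NA Green function}: $\Gr^+_{\seq{h}}$ is pluriharmonic on the open set $\U_{\seq{h}}^+\cup\I{\K_{\seq{h}}^+}$, so $dd^c\Gr^+_{\seq{h}}$ vanishes there, and $\C^2\setminus(\U_{\seq{h}}^+\cup\I{\K_{\seq{h}}^+})=\partial\K_{\seq{h}}^+$ since $\K_{\seq{h}}^+$ is closed with complement $\U_{\seq{h}}^+$. This justifies setting $\J_{\seq{h}}^+:=\partial\K_{\seq{h}}^+$, and symmetrically for $\mu^-_{\seq{h}}$.

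For the reverse inclusion $\partial\K_{\seq{h}}^+\subseteq\text{supp}(\mu^+_{\seq{h}})$ I would first establish the strict positivity $\Gr^+_{\seq{h}}>0$ on $\U_{\seq{h}}^+$, the non-autonomous analogue of Corollary \ref{c:non-zero on uo+}. This rests on the semi-invariance relation
\[
\Gr^+_{\seq{h}}(z)=\frac{1}{\du_k}\,\Gr^+_{\{h_{j+k}\}_{j\ge 1}}\bigl(h(k)(z)\bigr),
\]
obtained by passing to the limit in (\ref{e:NA Green sequence}): given $z\in\U_{\seq{h}}^+$, by Remark \ref{r:NA K+ closed} there is $k_0$ with $h(k_0)(z)\in\I{V_R^+}$, and then by (\ref{e: NA filtration+}) one has $h(\ell)(z)\in\I{V_{R_\ell}^+}$ with $R_\ell\to\infty$; choosing $\ell$ with $R_\ell>e^{\widetilde{M_0}}$ and using the uniform bound $\bigl|\Gr^+_{\seq{h}'}(x,y)-\log|y|\bigr|\le\widetilde{M_0}$ on $V_R^+$ valid for every shifted sequence $\seq{h}'$ (as in the proof of Corollary \ref{c:Green constant}) gives $\Gr^+_{\{h_{j+\ell}\}}(h(\ell)(z))\ge\log R_\ell-\widetilde{M_0}>0$, hence $\Gr^+_{\seq{h}}(z)>0$. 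Now suppose $z_0\in\partial\K_{\seq{h}}^+\setminus\text{supp}(\mu^+_{\seq{h}})$; then $dd^c\Gr^+_{\seq{h}}=0$ on a ball $B(z_0;r)$, so, $\Gr^+_{\seq{h}}$ being continuous, uniqueness of the Dirichlet problem (as in the proof of Theorem \ref{t:result 3}) makes it pluriharmonic on $B(z_0;r)$. Since $\K_{\seq{h}}^+$ is closed, $z_0\in\K_{\seq{h}}^+$, so $\Gr^+_{\seq{h}}(z_0)=0$; as $\Gr^+_{\seq{h}}\ge 0$ and $B(z_0;r)$ is connected, the minimum principle forces $\Gr^+_{\seq{h}}\equiv 0$ on $B(z_0;r)$, contradicting the strict positivity above because $\partial\K_{\seq{h}}^+=\partial\U_{\seq{h}}^+$ and hence $B(z_0;r)\cap\U_{\seq{h}}^+\ne\emptyset$. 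Thus $\text{supp}(\mu^+_{\seq{h}})=\partial\K_{\seq{h}}^+=\J_{\seq{h}}^+$, and likewise $\text{supp}(\mu^-_{\seq{h}})=\J_{\seq{h}}^-$. I expect this reverse inclusion --- really pinning down $\Gr^\pm_{\seq{h}}>0$ on $\U^\pm_{\seq{h}}$ via the semi-invariance and the filtration growth --- to be the main point requiring care.

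Finally, for $\mu_{\seq{h}}=\mu^+_{\seq{h}}\wedge\mu^-_{\seq{h}}$: since $\Gr^+_{\seq{h}}$ is continuous on $\C^2$ (Theorem \ref{t: NA Green function}), $\mu^+_{\seq{h}}$ has a continuous local potential, so the product $\mu^+_{\seq{h}}\wedge\mu^-_{\seq{h}}$ is well defined (Proposition 3.2 of \cite{DemaillyBook}) and is a positive measure supported on $\text{supp}(\mu^+_{\seq{h}})\cap\text{supp}(\mu^-_{\seq{h}})=\partial\K_{\seq{h}}^+\cap\partial\K_{\seq{h}}^-\subseteq\K_{\seq{h}}^+\cap\K_{\seq{h}}^-$, which is compact by Remark \ref{r:NA K+ closed}. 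Computing the mass on $\mathbb{P}^2$, where (as in Remark \ref{r:projective currents}) $\mu^\pm_{\seq{h}}$ extend to closed positive $(1,1)$-currents of mass $1$ meeting the line at infinity only in the distinct points $I^\pm$, the total mass of the wedge equals the cohomological intersection number $[\omega_{FS}]\cdot[\omega_{FS}]=1$. Hence $\mu_{\seq{h}}$ is a compactly supported probability measure, which completes the proof.
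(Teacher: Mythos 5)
Your proposal is correct and takes essentially the same route as the paper: positivity, closedness, mass $1$ and the inclusion $\text{supp}(\mu^\pm_{\seq{h}})\subseteq\partial\K^\pm_{\seq{h}}$ are read off from Theorems \ref{t: NA Green function} and \ref{t: NA sequential convergence} together with the logarithmic growth, and the reverse inclusion is the same minimum-principle argument at points of $\partial\K^\pm_{\seq{h}}$ that the paper uses (phrased there as $\Gr^+_{\seq{h}}$ being non-constant yet attaining its minimum value zero near any $z_0\in\J^+_{\seq{h}}$). You merely spell out details the paper leaves implicit\,---\,notably the strict positivity $\Gr^\pm_{\seq{h}}>0$ on $\U^\pm_{\seq{h}}$ via the shifted-sequence semi-invariance and filtration growth, and the mass count for $\mu^+_{\seq{h}}\wedge\mu^-_{\seq{h}}$, which the paper simply asserts, citing Remark \ref{r:NA K+ closed} for compact support.
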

\begin{proof}
We only prove $\text{Supp } \mu_{\seq{h}}^\pm=\J_{\seq{h}}^\pm$, here. Since $\Gr_{\seq{h}}^+$ is non-constant on $\C^2$ and attains the minimum value, i.e., zero, in the interior of neighborhood of a point $z_0 \in \J_{\seq{h}}^+$, the function is strictly pluriharmonic at $z_0.$ As $z_0$ is an arbitrary point on $\J_{\seq{h}}^+,$ the support of $\mu_{\seq{h}}^+$ is equal to $\J_{\seq{h}}^+$. A similar argument will work for $\mu_{\seq{h}}^-.$ Also $\mu_{\seq{h}}$ is a positive measure is immediate, and it is compactly supported follows from Remark \ref{r:NA K+ closed}.
\end{proof}
\begin{rem}\label{r:NA Julia}
Note that any subsequence of $\{h(k)\}$ neither diverges to infinity nor is it bounded on any neighbourhood of a point $z_0 \in \partial \K_{\seq{h}}^+$. Thus $\J_{\seq{h}}^+=\partial \K_{\seq{h}}^+$ is contained in the Julia set for the dynamics of the non-autonomous family $\seq{h}$. But note that \[\C^2 \setminus \J_{\seq{h}}^+=\I{\K_{\seq{h}}^+} \cup \U_{\seq{h}}^+.\]
Hence by Lemma \ref{l:step 4} and Remark \ref{r:step 4}, $\I{\K_{\seq{h}}^+}$ is contained in the Fatou set and thus the Julia set corresponding to the dynamics of $\seq{h}$ is equal to $\J_{\seq{h}}^+$.
\end{rem}
\begin{rem}\label{r:conditions NA}
Note that the two `crucial' conditions required on a non-autonomous sequence of H\'{e}non maps $\seq{h}$ of the form (\ref{e:fghm}), to complete the proof of Theorem \ref{t: NA Green function} and \ref{t: NA sequential convergence} are
\begin{enumerate}[leftmargin=14pt]
\item[(i)] The sequence $\seq{h}$ admits a uniform radius filtration  $R_{\seq{h}}>1$ (in the above case it is the radius of filtration of the semigroup $\s$, generated by $\G$), such that for every $R>R_{\seq{h}}$
\begin{itemize}
	\item $\displaystyle \overline{h_k(V_R^+)} \subset V_R^+ \text{ and } \overline{h^{-1}_k(V_R^-}) \subset V_R^-.$
	\item there exists a sequence positive real numbers $\seq{R}$ diverging to infinity, with $R_0=R$, satisfying
	$\displaystyle V_{R_k} \cap h(k)(V_R^+)=\emptyset \text{ and }V_{R_k} \cap h^{-1}(k)(V_R^-)=\emptyset.$
	\item There exist uniform constants $0<m<1<M$, such that the filtration identities (\ref{e: NA filtration+}) and (\ref{e:NA filtration-}) are satisfied on $V_R^\pm$, respectively.
\end{itemize}
\item[(ii)] For every $R\ge R_{\seq{h}}$, there exists a uniform constant $B_R=\max\{\norm{h_k(z)}: z\in V_R\} < \infty.$ The same holds in the above setup of Theorem \ref{t: NA Green function} and \ref{t: NA sequential convergence}, as the choices for $h_k$ are finite, for every $k \ge 1.$ 
\end{enumerate} 
\end{rem}
\noindent Hence we have the following analogue of of Theorem \ref{t: NA Green function} and \ref{t: NA sequential convergence} in a more general setup.
\begin{rem}\label{r:general NA}
Let $\seq{h}$ be a non-autonomous sequence of H\'{e}non maps satisfying conditions (i) and (ii) of Remark \ref{r:conditions NA} then 
\begin{itemize}[leftmargin=14pt]
\item The sequences of plurisubharmonic function $\seq{\Gr^\pm}$, as defined in (\ref{e:NA Green sequence}) converges to a plurisubharmonic continuous functions $\Gr_{\seq{h}}^\pm$ on $\C^2$, respectively. Further $\Gr^\pm_{\seq{h}}$ is pluriharmonic on $\U_{\seq{h}}^\pm$ and $\I{\K_{\seq{h}}^\pm}$, where  $\U_{\seq{h}}^\pm$ and $\I{\K_{\seq{h}}^\pm}$.
	
\smallskip
\item The sequences $\{G^\pm_{h(k)}\}$ converge uniformly to $\Gr_{\seq{h}}^\pm$, respectively, on compact subsets.
\end{itemize}
\end{rem}
\begin{ex}\label{ex}
Let $H_k(x,y)=(a_k y, a_k x+p(y))$ where $p$ is a polynomial of degree at least $2$, then the sequence $\{\tilde{H}_k\}$ defined as below is a sequence of H\'{e}non maps.
\begin{align*}
\tilde{H}_k(x,y):=H_{2k} \circ H_{2k-1}(x,y)&
&=(y,a_{2k}a_{2k-1}x+p(y/a_{2k})) \circ (y,a_{2k}a_{2k-1}x+a_{2{k}}p(y)).
 \end{align*}
  Further, if $0<c<|a_k|<d$ for every $k \ge 1$, the conditions (i) and (ii) in Remark \ref{r:conditions NA} are satisfied, and by Remark \ref{r:general NA}, it is possible to construct the dynamical Green's functions. However, the condition (i) in Remark \ref{r:conditions NA} fails for $\tilde{H_k}^{-1}$, if $|a_k| \to 0$ (see Theorem 1.4 in \cite{F:short}).
  \end{ex}
Also note that the functions $\gr$ admit logarithmic growth at infinity, and the closure of the sets $\K_{\seq{h}}^\pm$ in $\mathbb{P}^2$ is $\K_{\seq{h}}^\pm \cup I^\pm$, as defined in Section \ref{s:5}. Hence it is possible to generalise the results stated in Section \ref{s:5}, to the setup of dynamics of a non-autonomous sequence of H\'{e}non maps $\seq{h}\subset \s$. In particular, the analogue to Corollary \ref{t:result 4} is
\begin{cor}\label{t:NA currents}
Let $S^\pm$ be two closed positive $(1,1)$-currents in $\mathbb{P}^2$ of mass 1, such that the support of  $S^+$ does not contain the point $[0:1:0]$ and the support of  $S^-$ does not contain the point $[1:0:0]$. Also, let $\overline{h(k)}$ denote the extension of $h(k)$ to $\mathbb{P}^2,$ for every $k \ge 1$ then 
\[\lim_{k \to \infty}\frac{1}{\du_k}  \overline{h(k)}^*(S^+) \to \mu_{\seq{h}}^+ \text{ and }\lim_{k \to \infty}\frac{1}{\du_k}  \overline{h^{-1}(k)}^*(S^-) \to \mu_{\seq{h}}^-.\]
\end{cor}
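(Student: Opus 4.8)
The plan is to establish the non-autonomous analogue of Proposition \ref{p:projective current} for the single compositions $h(k)$ and then deduce the corollary exactly as Corollary \ref{t:result 4} was deduced from Proposition \ref{p:projective current}. First I would record the projective picture: as in Section \ref{s:5}, each $h(k)=h_k\circ\cdots\circ h_1$ extends to a rational map $\overline{h(k)}$ of $\mathbb{P}^2$ whose sole indeterminacy point is $I^+=[1:0:0]$, which contracts $L_\infty^+$ to the attracting fixed point $I^-=[0:1:0]$ and for which $V_R^+$ lies in the basin of $I^-$; symmetrically $\overline{h^{-1}(k)}$ has indeterminacy point $I^-$, attracting fixed point $I^+$, and $V_R^-$ in its basin. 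By Remark \ref{r:NA K+ closed} together with the arguments of Proposition \ref{p:projective closures}, $\overline{\K_{\seq{h}}^\pm}=\K_{\seq{h}}^\pm\cup I^\pm$ in $\mathbb{P}^2$; and by the logarithmic growth estimate for $\Gr_{\seq{h}}^\pm$ recorded after Theorem \ref{t: NA Green function}, the functions $g_{\seq{h}}^\pm(z)=\Gr_{\seq{h}}^\pm(z)-\tfrac12\log(\|z\|^2+1)$ extend as pluriharmonic, bounded quasi-potentials of the closed positive $(1,1)$-currents $\mu_{\seq{h}}^\pm$ on neighbourhoods of $I^\mp$, respectively. The same growth estimate, applied to the individual H\'{e}non maps $h(k)$, shows that the quasi-potentials $g_{h(k)}^+(z)=G_{h(k)}^+(z)-\tfrac12\log(\|z\|^2+1)$ of the currents $\mu_{h(k)}^+:=\tfrac1{2\pi}dd^cG_{h(k)}^+$ are bounded and pluriharmonic on a fixed neighbourhood of $I^-$, uniformly in $k$; this uniformity is exactly what the setup supplies, since each factor $h_k$ varies in the finite set $\G$ and therefore obeys the common filtration estimates of Remark \ref{r:filtration estimate} with the shared constants $0<m<1<M$ and the shared radius of filtration $R_\s$.

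Next I would prove the quantitative bound
\[
\Bigl|\bigl\langle \tfrac1{\du_k}\overline{h(k)}^*(S^+)-\mu_{h(k)}^+,\ \phi\bigr\rangle\Bigr|\le \frac{C_0}{\du_k}\bigl(1+\log^+\M^{4k}\bigr)\,\|\phi\|_{\mathcal{C}^2}
\]
for every $\mathcal{C}^2$ test $(1,1)$-form $\phi$ on $\mathbb{P}^2$, by transcribing the proof of Proposition \ref{p:projective current} with the average over $\G_k$ replaced by the single map $h(k)$ and $D^k$ replaced by $\du_k$. Concretely: I extend $S^+$ across $I^+$ by zero (Skoda--El-Mir) to a mass-$1$ closed positive current on $\mathbb{P}^2$ not meeting $I^-$; pick a quasi-potential $u$ with $dd^cu=S^+-\omega_{FS}$ and $|u|\le A$ on a neighbourhood $\V$ of $I^-$; shrink $\V$ using Lemma \ref{l:refined filter} and Remark \ref{r:modified filter} so that $\V\cap\C^2$ lies deep inside $V_R^+$ and $\mathrm{supp}\bigl(\tfrac1{\du_k}\overline{h(k)}^*(S^+)-\mu_{h(k)}^+\bigr)\cap\V=\emptyset$ for all $k$, whence the pairing above is supported in $\mathbb{P}^2\setminus\V$. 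On $\mathbb{P}^2\setminus\V$ one has $\overline{h^{-1}(k)}(\mathbb{P}^2\setminus\V)\subset\mathbb{P}^2\setminus\V$ (as $I^+$ is super-attracting for each $\overline{\h_i^{-1}}$), the $\mathcal{C}^1$-norm of $\overline{h^{-1}(k)}$ is at most $\M^k$, and $u-g_{h(k)}^+$ is a d.s.h.\ function on $\mathbb{P}^2$ whose DSH-norm is bounded uniformly in $k$ by \cite[Lemma 3.11]{Dinh-Sibony} (again using the uniform filtration constants). Hence \cite[Lemma 3.13]{Dinh-Sibony} applied to
\[
\bigl\langle \tfrac1{\du_k}\overline{h(k)}^*(S^+)-\mu_{h(k)}^+,\ \phi\bigr\rangle=\tfrac1{\du_k}\bigl\langle u-g_{h(k)}^+,\ dd^c(\phi\circ\overline{h^{-1}(k)})\bigr\rangle
\]
gives the displayed estimate, and since $\du_k\ge 2^k$ its right-hand side is at most $C_0\,(1+4k\log^+\M)\,2^{-k}\,\|\phi\|_{\mathcal{C}^2}\to 0$ as $k\to\infty$.

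To finish, I would combine this with the convergence $\mu_{h(k)}^+\to\mu_{\seq{h}}^+$: by Theorem \ref{t: NA sequential convergence} the plurisubharmonic functions $G_{h(k)}^+$ converge uniformly on compact subsets of $\C^2$ to $\Gr_{\seq{h}}^+$, so $dd^cG_{h(k)}^+\to dd^c\Gr_{\seq{h}}^+$ in the sense of currents by \cite[Corollary 3.6]{DemaillyBook}, and the uniform boundedness of the quasi-potentials near $I^-$ promotes this to weak convergence $\mu_{h(k)}^+\to\mu_{\seq{h}}^+$ on $\mathbb{P}^2$. Adding the two convergences yields $\tfrac1{\du_k}\overline{h(k)}^*(S^+)\to\mu_{\seq{h}}^+$; the assertion $\tfrac1{\du_k}\overline{h^{-1}(k)}^*(S^-)\to\mu_{\seq{h}}^-$ is the mirror argument, carried out for the sequence of inverses (equivalently, for the semigroup $\s^-$), with the roles of $I^+$ and $I^-$ interchanged. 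The step I expect to be the main obstacle is the uniform-in-$k$ control of the DSH-norm of $u-g_{h(k)}^+$, i.e.\ uniform filtration and growth estimates along the whole sequence $\{h(k)\}$; this is precisely where the hypothesis that $\seq{h}$ varies within the finite collection $\G$ is used, since it forces a common radius of filtration $R_\s$ and common constants $m,M,B$ (Lemma \ref{l:filtration}, Remark \ref{r:filtration estimate}), and once that uniformity is secured the remainder is a routine substitution into the proof of Proposition \ref{p:projective current}.
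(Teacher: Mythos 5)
Your proposal is correct and follows essentially the same route as the paper: the paper declares the corollary ``immediate'' from Remark \ref{r:NADS} (whose second estimate is exactly the bound $\bigl|\langle \frac{1}{\du_k}\overline{h(k)}^*(S^+)-\mu_{h(k)}^+,\phi\rangle\bigr|\le C_0\du_k^{-1}(1+\log^+\M^{4k})\norm{\phi}_{\mathcal{C}^2}$ you re-derive by specializing Proposition \ref{p:projective current} to the single element $h(k)\in\G_k$) combined with Theorem \ref{t: NA sequential convergence} giving $\mu_{h(k)}^\pm\to\mu_{\seq{h}}^\pm$. You also correctly identify, as the paper does, that the argument hinges on $h(k)\in\G_k$ (finite generating set, hence uniform filtration constants and uniformly bounded quasi-potentials near $I^\mp$), which is why this corollary, unlike Theorems \ref{t: NA Green function} and \ref{t: NA sequential convergence}, does not extend to general non-autonomous families.
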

The  proof is immediate from Remark \ref{r:NADS} and Theorem \ref{t: NA sequential convergence}. Also the proof of Corollary \ref{t:NA currents} does not generalises to general non-autonomous families of H\'{e}non maps (observed in Remark \ref{r:conditions NA}), unlike Theorems \ref{t: NA Green function} and \ref{t: NA sequential convergence}. It crucially requires that $h(k) \in \G_k$, $k \ge 1.$
\begin{rem}\label{r:analogy}
Note that as mentioned in the introduction, the above result is a more explicit version of Theorem 5.1 in \cite{DS:horizontal}, for H\'{e}non maps. The latter established the existence of similar non-autonomous currents for families of horizontal maps on appropriate subdomains of $\C^k$, $k \ge 2$ and H\'{e}non maps of the above form are indeed known to be horizontal on a large enough polydisc at the origin in $\C^2$, by \cite{S:rationalle}. Also, the construction and the convergence properties of similar Green's current for parametrised families of skew-product of (monic) H\'{e}non maps\,---\,of fixed degree\,---\,over compact complex manifolds, have been studied in \cite{PV: paper 1}.
\end{rem}
\section{Attracting basins of non-autonomous sequences in $\s$ }\label{s:7}
Let $\s$ be a semigroup generated by finitely many H\'enon maps, having an attracting behaviour, i.e., satisfy (\ref{e:uniformly attracting}) at the origin. Then for every $i \ge 1$, there exist $r>0$ and $0< \alpha < 1 $ such that $\h_i(B(0;r)) \subset B(0;\alpha r)$. In particular, for every sequence $\{h_k\} \subset \s$, $h_k(z) \to 0$ as $n \to \infty$ for $z \in B(0;r)$. Hence we have the following observations.
\begin{itemize}[leftmargin=14pt]
\item The strong filled positive Julia set $\ko_\s^+$ is non-empty and contains a neighbourhood of the origin. Also, the strong filled negative Julia set $\ko_\s^-$ is non-empty and contains the origin.
	
\smallskip
\item The basin of attraction at the origin of every $h \in \s$, say $\Omega_h$, is a Fatou--Bieberbach domain, i.e., biholomorphic to $\C^2$ (see \cite{RRpaper} for the proof).

\smallskip
\item The non-autonomous basin of attraction at the origin for a sequence $\{h_k\}$\,---\,denoted by $\Omega_{\seq{h}}$, defined in statement of Theorem \ref{t:BC}\,---\,is an \emph{elliptic} domain containing the origin as every $h_k$ satisfies the uniform bound condition. (see \cite{FSpaper},\cite{FW:Proceedings} for the result).
 
 \end{itemize}
 \begin{lem}\label{l:boundary}
  $\partial \Omega_{\seq{h}}\subset \partial \K_{\seq{h}}^+$ for the non-autonomous dynamical system $\seq{h}.$
 \end{lem}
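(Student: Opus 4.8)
Since $\K_{\seq{h}}^+$ is closed (Remark~\ref{r:NA K+ closed}), one has $\partial\K_{\seq{h}}^+=\K_{\seq{h}}^+\setminus\I{\K_{\seq{h}}^+}$, so the plan is to establish the two facts: (a) $\partial\Omega_{\seq{h}}\subset\K_{\seq{h}}^+$, and (b) $\partial\Omega_{\seq{h}}\cap\I{\K_{\seq{h}}^+}=\emptyset$. Fact (a) is immediate: if $z\in\Omega_{\seq{h}}$ then $h(k)(z)\to 0$, so $\{h(k)(z)\}$ is bounded and $\Omega_{\seq{h}}\subset\K_{\seq{h}}^+$; as $\K_{\seq{h}}^+$ is closed, $\partial\Omega_{\seq{h}}\subset\overline{\Omega_{\seq{h}}}\subset\K_{\seq{h}}^+$. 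The content is fact (b), which I would prove by contradiction.

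Suppose $z_0\in\partial\Omega_{\seq{h}}$ lies in $\I{\K_{\seq{h}}^+}$. Since $\Omega_{\seq{h}}$ is open, $z_0\notin\Omega_{\seq{h}}$. By Remark~\ref{r:NA Julia}, $\I{\K_{\seq{h}}^+}$ is contained in the Fatou set of the non-autonomous family $\seq{h}$, so $\{h(k)\}$ is normal on a ball $B=B(z_0;\rho)$ (alternatively, one sees this directly: shrinking $\rho$ so that $\overline{B}\subset\I{\K_{\seq{h}}^+}$ is compact, Lemma~\ref{l:step 4} together with Lemma~\ref{l:filtration} and the filtration estimate (\ref{e: NA filtration+}) force $h(k)(\overline{B})\subset V_R$ for all $k$ past some $k_0$, since a point of $\K_{\seq{h}}^+$ whose orbit ever enters $\I{V_R^+}$ would escape; hence $\{h(k)\}_{k\ge k_0}$ is uniformly bounded, thus normal, on $B$). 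Pass to a subsequence $h(k_j)\to F$ locally uniformly on $B$, with $F$ holomorphic. Because $z_0\in\partial\Omega_{\seq{h}}$ and $\Omega_{\seq{h}}$ is open, $B\cap\Omega_{\seq{h}}$ is a nonempty open set, and on it $h(k)\to 0$ as $k\to\infty$; hence $F\equiv 0$ on $B\cap\Omega_{\seq{h}}$, and by the identity theorem $F\equiv 0$ on the connected set $B$. In particular $h(k_j)(z_0)\to F(z_0)=0$.

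The decisive step — the only place where the uniform attraction hypothesis (\ref{e:uniformly attracting}) is used — is to upgrade this convergence along a subsequence to convergence of the full orbit. Once $h(k_j)(z_0)\in B(0;r)$ for some large $j$, applying $\|\h_i(w)\|\le\alpha\|w\|$ on $B(0;r)$ to the tail $h_k\circ\cdots\circ h_{k_j+1}$ gives $\|h(k)(z_0)\|\le\alpha^{k-k_j}\|h(k_j)(z_0)\|\to 0$ as $k\to\infty$, so $z_0\in\Omega_{\seq{h}}$, contradicting $z_0\notin\Omega_{\seq{h}}$. This proves (b), and hence $\partial\Omega_{\seq{h}}\subset\partial\K_{\seq{h}}^+$. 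I expect the normality/identity-theorem portion to be routine; the point worth isolating is precisely this last upgrade, since without the uniform attraction a point of $\partial\Omega_{\seq{h}}$ sitting in a bounded Fatou component of $\seq{h}$ could not otherwise be excluded from $\I{\K_{\seq{h}}^+}$.
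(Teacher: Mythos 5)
Your proof is correct and follows essentially the same route as the paper: both first observe $\partial \Omega_{\seq{h}}$ misses $\U_{\seq{h}}^+$ and then rule out $\I{\K_{\seq{h}}^+}$ by normality of $\{h(k)\}$ on a ball $B$ about $z_0$ combined with the identity theorem applied to the limit, which vanishes on the nonempty open set $B\cap\Omega_{\seq{h}}$. The only (minor) difference is how the contradiction is closed: the paper notes that, since $z_0\notin\Omega_{\seq{h}}$, some subsequence $h(n_k)(z_0)$ stays away from $0$, contradicting the locally uniform limit being identically zero, whereas you instead use the uniform attraction (\ref{e:uniformly attracting}) to upgrade $h(k_j)(z_0)\to 0$ to convergence of the full orbit and conclude $z_0\in\Omega_{\seq{h}}$; both are valid.
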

\begin{proof}
Observe that by the argument as in Remark \ref{r:NA Julia}, $\partial \Omega_{\seq{h}}\cap \U_{\seq{h}}^+=\emptyset$. Now, if $z_0 \in \partial \Omega_{\seq{h}} \cap \I{\K_{\seq{h}}^+}$, then there exists a neighbourhood of $B(z_0;\delta) \subset \I{\K_{\seq{h}}^+}$, i.e., the sequence $\{h(k)\}$ is locally uniformly bounded and hence normal on $B(z_0;\delta)$. Then there exists a subsequence $\{n_k\}$ such that $h(n_k)(z_0)$ does not tend to $0$, however $h(n_k)(z_1)\to 0$  whenever $z_1\in B(z_0;\delta) \cap \Omega_{\seq{h}}^+$. Thus $\{h(k)\}$ is not normal on any neighbourhood of $z_0$. Hence by Remark \ref{r:NA Julia}, $\partial \Omega_{\seq{h}} \subset \J_{\seq{h}}^+= \partial \K_{\seq{h}}^+$.  
\end{proof}
\begin{rem}
Note that in the setup of iterative dynamics of H\'{e}non maps, the boundary of \emph{any} attracting basin is equal to the Julia set (see \cite[Theorem 2]{BS0}). In the above lemma we only show that $\partial \Omega_{\seq{h}}$ is properly contained in the Julia set. This leads to the question: Is $\J_{\seq{h}}^+ = \partial \Omega_{\seq{h}}$ in the non-autonomous setup?
\end{rem}

 Let $\Omega_{h(k)}$ be the basin of attraction of the origin for every $h(k)$, $k \ge 1$, as origin is an attracting fixed point, i.e., the definitions are compared as
 \begin{align*}
\Omega_{h(k)}=\{z \in \C^2: h(k)^n (z) \to 0 \text{ as } n \to \infty\} \text{ and }
\Omega_{\seq{h}}=\{z \in \C^2: h(k) (z) \to 0 \text{ as } k \to \infty\}.
\end{align*} 
 \begin{lem}\label{l:compacts in basin}
Let $K$ be a compact set contained in $\Omega_{\seq{h}}$	 then there exists a positive integer dependent on $K$, i.e., $N_0(K) \ge 1$ such that $K \subset \Omega_{h(k)}$ for every $k \ge N_0(K).$
\end{lem}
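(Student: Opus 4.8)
The plan is to exploit the uniform attracting behaviour near the origin to replace the non-autonomous convergence $h(k)(z)\to 0$ by the statement that, for large $k$, the whole of $h(k)(K)$ lands in a fixed small ball $B(0;r)$ on which every generator $\h_i$ contracts by the factor $\alpha<1$; once $h(k)(z)$ is inside $B(0;r)$, iterating $h(k)$ keeps the orbit inside $B(0;r)$ and drives it to $0$, so $z\in\Omega_{h(k)}$. First I would fix $r>0$ and $0<\alpha<1$ as in (\ref{e:uniformly attracting}), so that $\h_i(B(0;r))\subset B(0;\alpha r)$ for every $1\le i\le n_0$; consequently $h(B(0;r))\subset B(0;r)$ for every $h\in\s$, and in fact $h(k)(B(0;r))\subset B(0;\alpha^k r)$ since $h(k)$ is a composition of $k$ generators. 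In particular $B(0;r)\subset\Omega_{h(k)}$ for every $k\ge 1$.

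Next I would show that there is $N_0=N_0(K)$ with $h(k)(K)\subset B(0;r)$ for all $k\ge N_0$. This is where the hypothesis $K\subset\Omega_{\seq{h}}$ enters: for each $z\in K$ we have $h(k)(z)\to 0$, so there is $k_z$ with $h(k_z)(z)\in B(0;r/2)$; by continuity of the finitely many maps making up $h(k_z)$ there is a neighbourhood $W_z$ of $z$ with $h(k_z)(W_z)\subset B(0;r)$, and then for every $k\ge k_z$ we get $h(k)(W_z)=h_k\circ\cdots\circ h_{k_z+1}\big(h(k_z)(W_z)\big)\subset B(0;r)$ because each further generator maps $B(0;r)$ into itself. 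Covering the compact set $K$ by finitely many such $W_{z_1},\dots,W_{z_m}$ and setting $N_0(K)=\max\{k_{z_1},\dots,k_{z_m}\}$ gives $h(k)(K)\subset B(0;r)$ for all $k\ge N_0(K)$.

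Finally, for $k\ge N_0(K)$ and $z\in K$, the point $w:=h(k)(z)$ lies in $B(0;r)$, hence in $\Omega_{h(k)}$ by the first step; since $\Omega_{h(k)}$ is totally invariant under $h(k)$ (it is the basin of an attracting fixed point, so $h(k)^{-1}(\Omega_{h(k)})=\Omega_{h(k)}$), and $h(k)(z)=w\in\Omega_{h(k)}$, we conclude $z\in\Omega_{h(k)}$. Thus $K\subset\Omega_{h(k)}$ for every $k\ge N_0(K)$. The only mildly delicate point is the uniform choice of $N_0$, i.e. the compactness argument in the second paragraph; everything else is a direct consequence of the contraction estimate $\h_i(B(0;r))\subset B(0;\alpha r)$ and the invariance of attracting basins, both of which are already recorded in the excerpt.
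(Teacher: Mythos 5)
Your proof is correct and follows essentially the same route as the paper: push $K$ into the contracting ball $B(0;r)$ after finitely many steps, note that $B(0;r)$ lies in the basin of every element of $\s$, and then use complete invariance of $\Omega_{h(k)}$ under $h(k)$ to pull back and conclude $K \subset \Omega_{h(k)}$. The only differences are cosmetic: you spell out the compactness/uniformity argument producing $N_0(K)$ (which the paper asserts without detail), and you bypass the paper's intermediate observation that $G^+_{h(k)}$ vanishes on $K$, which is not needed for the conclusion.
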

\begin{proof}
 	Note that $\{h_k\}$ varies within  a collection of finitely many H\'{e}non maps, $\{\h_i: 1\le i \le n_0\}$, each admitting an attracting fixed point at the origin. Thus there exists a neighbourhood $B(0;r)$ at the origin and $0<\alpha<1$ such that $\h_i(B(0;r)) \subset B(0;\alpha r)$ for every $1 \le i \le n_0.$ In particular, $B(0;r)$ is contained in attracting basin of the origin for every $h$, $h \in \s$. Since $K \subset \Omega_{\seq{h}}$ is compact, $h(k)(w) \in B(0;r)$ for every $w \in K$ and $k \ge N_0(K).$ Hence  	\[G^+_{h(k)}(h(k)(w))=0, \text{ i.e., }G_{h(k)}^+(w)=0\]
	for every $w \in K$ and $n \ge N_0(K).$ So $K \subset \I{K^+_{h(k)}}.$ But $h(k)(0)=0$, hence $h(k)(\Omega_{h(k)})=\Omega_{h(k)}$, and the above implies $h(k)(K) \subset \Omega_{h(k)}$. Thus $K \subset \Omega_{h(k)}$ for every $k \ge N_0(K).$
\end{proof}
 Next, we complete the proof of Theorem \ref{t:BC}, by appealing to an idea used in \cite{W:FB domains}.
 \begin{proof}[Proof of Theorem \ref{t:BC}] Let $\seq{K}$ be an exhaustion by compacts of $\Omega_{\seq{h}}.$ Then from Lemma \ref{l:compacts in basin}, there exists an increasing sequence of positive integers $\seq{n}$ such that $K_k \subset \Omega_{h(n_k)}.$ Since  every $\Omega_{h(n_k)}$ is a Fatou-Bieberbach domain, our goal is to construct a sequence of biholomorphisms $\seq{\phi}$, i.e., holomorphic maps that are both one-one and onto from $\Omega_{h(n_k)}$ to $\C^2$, appropriately and inductively, such that for a given summable sequence of positive real numbers $\{\rho_k\}$ the following holds
 	\begin{align}\label{e:main}
 		 \norm{\phi_k(z)-\phi_{k+1}(z)}< \rho_k \text{ for } z \in K_k \text{ and }
 		\norm{\phi_k^{-1}(z)-\phi_{k+1}^{-1}(z)}< \rho_k \text{ for } z \in B(0;k).
 	\end{align}
 	\textit{Basic step: }Let $\phi_1: \Omega_{h(n_1)} \to \C^2$ be a biholomorphism. By results in \cite[Theorem 2.1]{AL:inventiones} for $\delta< \rho_1/2$ there exists $F_2 \in \Aut$ such that 
 	\begin{align}\label{e:B1}
 	 \norm{\phi_1(z)-F_{2}(z)}&< \delta \text{ for } z \in K_1(r) \text{ and } 
 		\norm{\phi_1^{-1}(z)-F_{2}^{-1}(z)}&< \delta \text{ for } z \in B(0;1+r)
 	\end{align}
 	where $K_1(r)=\cup_{z \in K_1} (B(z;r)) \subset \Omega_{h(n_1)}$ for some $r>0$, i.e., an $r$-neighbourhood of $K_1$, contained in $\Omega_{h(n_1)}$. Since $F_2$ is uniformly continuous on $K_1(r)$, there exists $\epsilon_0>0$ such that for $z,w \in K_1(r)$
 	\begin{align}\label{e:B2}
 		\norm{F_2(z)-F_2(w)}< \delta \text{ whenever }\norm{z-w}< \epsilon_0.
 	\end{align}
 	Let $\epsilon< \min\{\epsilon_0,r, \delta\}.$ Then from \cite[Lemma 4]{W:FB domains}, there exists a biholomorphism $\psi_2: \Omega_{h(n_2)} \to \C^2$ such that
 	\begin{align}\label{e:B3}
  \norm{\psi_2(z)-z}< \epsilon \text{ for every } z \in K_1 \text{ and }
 		\norm{\psi_2^{-1}(z)-z}< \epsilon \text{ for every } z \in F_2^{-1}(B(0;1)).
 	\end{align}
 	Thus for $z \in K_1$, $\psi_2(z) \in K_1(r)$ and by (\ref{e:B2}), (\ref{e:B3}) it follows that $\norm{F_2 \circ \psi_2(z)-F_2(z)}< \delta.$ Hence from (\ref{e:B1}), 
 	\[\norm{F_2\circ \psi_2(z)-\phi_1(z)}<2\delta< \rho_1 \text{ for }z \in K_1.\]
 	Also by (\ref{e:B3}), for $z \in B(0;1)$, $\norm{\psi_2^{-1} \circ F_2^{-1}-F_2^{-1}(z)}< \epsilon < \delta$. Again by (\ref{e:B1}), 
 	\[\norm{\psi_2^{-1}\circ F_2^{-1}(z)-\phi_1^{-1}(z)}<2\delta< \rho_1 \text{ for }z \in B(0;1).\]
 	Thus $\phi_1$ and $\phi_2:=F_2 \circ \psi_2$ satisfies (\ref{e:main}) for $k=1.$
 	
 	\smallskip\noindent
 	\textit{Induction step: }Suppose for $N \ge 2$, and there exist biholomorphisms $\phi_k: \Omega_{h(n_k)} \to \C^2$ such that (\ref{e:main}) is satisfied for every $1 \le k \le N-1.$ Our goal is to construct $\phi_{N+1}$ such that (\ref{e:main}) holds for $k=N.$ As before, for $\delta< \rho_{N}/2$, there exists $F_{N+1} \in \Aut$ such that 
 	\begin{align*}
 		\norm{\phi_{N}(z)-F_{N+1}(z)}< \delta \text{ for } z \in K_{N}(r) \text{ and } 
 		\norm{\phi_{N}^{-1}(z)-F_{N+1}^{-1}(z)}&< \delta \text{ for } z \in B(0;N+r),
 	\end{align*}
 	where $K_{N}(r)$ is an $r$-neighbourhood of $K_{N}$, contained in $\Omega_{h(k_N)}$  for some $r>0$. Since $F_{N+1}$ is uniformly continuous on $K_{N}(r)$, there exists $\epsilon_0>0$ such that for $z,w \in K_N(r)$
 	\begin{align}\label{e:G2}
 		\norm{F_{N+1}(z)-F_{N+1}(w)}< \delta \text{ whenever }\norm{z-w}< \epsilon_0.
 	\end{align}
 	Let $\epsilon< \min\{\epsilon_0,r, \delta\}.$ Then again by \cite[Lemma 4]{W:FB domains}, there exists a biholomorphism $\psi_{N+1}: \Omega_{h(n_{N+1})} \to \C^2$ such that
 	\begin{align}\label{e:G3}
	 \norm{\psi_{N+1}(z)-z}< \epsilon \text{ for } z \in K_{N} \text{ and }
	\norm{\psi_{N+1}^{-1}(z)-z}< \epsilon \text{ for } z \in F_{N+1}^{-1}(B(0;N)). 	
	\end{align}
 	Thus for $z \in K_N$, $\psi_{N+1}(z) \in K_N(r)$, and by (\ref{e:G2}), (\ref{e:G3}) it follows that $$\norm{F_{N+1} \circ \psi_{N+1}(z)-F_{N+1}(z)}< \delta.$$ Hence 
 	$\displaystyle\norm{F_{N+1}\circ \psi_{N+1}(z)-\phi_N(z)}<2\delta< \rho_N \text{ for }z \in K_N.$
 	Also, similarly as above, by (\ref{e:G3}), for $z \in B(0;N)$, $\displaystyle\norm{\psi_{N+1}^{-1} \circ F_{N+1}^{-1}(z)-F_{N+1}^{-1}(z)}< \epsilon < \delta$ and  by assumption on $F_{N+1}$, 
	\[\norm{\psi_{N+1}^{-1}\circ F_{N+1}^{-1}(z)-\phi_N^{-1}(z)}<2\delta< \rho_N.\]
	Thus $\phi_N$ and $\phi_{N+1}:=F_{N+1} \circ \psi_{N+1}$ satisfies (\ref{e:main}) for $k=N.$

 	\smallskip As $\seq{\rho}$ is summable, the sequences $\seq{\phi}$ and $\seq{\phi^{-1}}$ constructed converge on every compact subset of $\Omega_{\seq{h}}$ and $\C^2$, i.e., there exist analytic limit maps $\phi: \Omega_{\seq{h}} \to \C^2$ and $\tilde{\phi}: \C^2 \to \C^2.$ Since $\phi$ is a limit of biholomorphisms, either $\phi $ is one-one or $\text{Det }D\phi \equiv 0$ on $\Omega_{\seq{h}}.$ 

 	\smallskip Choose $A>0$ and $k \ge 1$, sufficiently large, such that $\sum_{i=k}^\infty \rho_i<A/2$.
	Also let $K=\phi_k^{-1}(B(0;A)$. Then $\vo{K}>0$, and by (\ref{e:main}), $B(0;A/2) \subset \phi(K) \subset B(0;3A/2)$, i.e., $\vo{\phi(K)}> \vo{B(0;A/2)}.$ But if $\text{Det }D\phi\equiv 0$, then $\vo{\phi(K)}=0$, which is not true. Hence $\phi$ is one-one on $\Omega_{\seq{h}}.$ 
	
 	\smallskip Finally, we prove that $\phi(\Omega_{\seq{h}})=\C^2.$ So first, observe that as a consequence of Theorem 5.2 in \cite{DE:Michael Problem}, $\phi_n^{-1}$ converges uniformly to $\tilde{\phi}$ on compact subsets of $\C^2$ and $\tilde{\phi}^{-1}=\phi$ on $\Omega_{\seq{h}}.$ Next we claim that for every positive integer $N_0 \ge 1$, $\tilde{\phi}(B(0,N_0)) \subset \Omega_{\seq{h}}.$ Suppose not, then there exists $z_0 \in \tilde{\phi}(B(0,N_0))$ such that $\Gr_{\seq{h}}^+(z_0)>0.$ Let $w_0=\tilde{\phi}^{-1}(z_0) \in B(0;N_0)$ and $z_k=\phi_k^{-1}(w_0).$ Then $z_k \in \Omega_{h(k)}$, $G_{h(k)}^+(z_k)=0$, for every $k \ge 1$ and $z_k \to z_0$ by (\ref{e:main}). But by Theorem \ref{t: NA sequential convergence}, $G_{h(k)}^+$ converges uniformly to $\Gr^+_{\seq{h}}$ on compact subsets of $\C^2$. Hence $\Gr^+_{\seq{h}}(z_0)=0$, which is a contradiction. Thus $\tilde{\phi}(B(0,N_0)) \subset \I{\K_{\seq{h}}^+}$ with $\tilde{\phi}(0) \in \Omega_{\seq{h}}$. Hence by Lemma \ref{l:boundary}, $\tilde\phi(\C^2) \subset \Omega_{\seq{h}}$ or $\C^2 \subset \phi(\Omega_{\seq{h}})=\C^2.$
 \end{proof}
 Also, the following is immediate from the above proof, and the Remarks \ref{r:conditions NA} and \ref{r:general NA}.
\begin{cor}\label{c:BC}
Let $\{\h_k\}$ be a sequence of H\'{e}non maps of form (\ref{e:fghm}), such that it satisfy 
\begin{itemize}[leftmargin=14pt]
\item	admits uniform filtration and bound conditions (i) and (ii) stated in Remark \ref{r:conditions NA}, and
\item is (upper) uniformly attracting  on a neighbourhood of origin, i.e., satisfying (\ref{e:uniformly attracting}).
\end{itemize}
  Then the basin of attraction of the sequence $\{\h_k\}$ at the origin is biholomorphic to $\C^2$.
\end{cor}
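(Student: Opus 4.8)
The plan is to run the proof of Theorem~\ref{t:BC} essentially verbatim, replacing each appeal to the semigroup structure of $\s$ by the uniform data supplied by the hypotheses. First I would record what the two assumptions buy us. Since $\seq{\h}$ satisfies conditions (i) and (ii) of Remark~\ref{r:conditions NA}, Remark~\ref{r:general NA} produces a plurisubharmonic continuous dynamical Green's function $\Gr^+_{\seq{h}}$ on $\C^2$, pluriharmonic on $\U^+_{\seq{h}}$ and on $\I{\K^+_{\seq{h}}}$ and with logarithmic growth on $V_R^+$, and, crucially, the ordinary Green's functions $G^+_{h(k)}$ of the compositions $h(k)=\h_k\circ\cdots\circ\h_1$ converge uniformly on compact subsets of $\C^2$ to $\Gr^+_{\seq{h}}$ (the exact analogue of Theorem~\ref{t: NA sequential convergence}). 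From the uniform attracting condition \eqref{e:uniformly attracting} there are $r>0$ and $0<\alpha<1$ with $\h_k(B(0;r))\subset B(0;\alpha r)$ for every $k$, so $B(0;r)\subset\Omega_{\seq{h}}$, each basin $\Omega_{h(k)}$ of the origin for $h(k)$ is a Fatou--Bieberbach domain, and $\Omega_{\seq{h}}$ is an elliptic domain containing the origin.

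Next I would verify that the two structural lemmas feeding into the proof of Theorem~\ref{t:BC} survive in this generality. The analogue of Lemma~\ref{l:step 4} holds because condition~(i) provides a sequence $R_k\to\infty$ with $h(k)^{-1}(V_R^-)\subset V_{R_k}^-$, so any compact set is eventually carried into $V_R\cup\I{V_R^+}$ by $h(k)$; combined with Remark~\ref{r:NA Julia} this gives, exactly as in Lemma~\ref{l:boundary}, the inclusion $\partial\Omega_{\seq{h}}\subset\partial\K^+_{\seq{h}}=\J^+_{\seq{h}}$. Likewise the proof of Lemma~\ref{l:compacts in basin} uses only the uniform neighbourhood $B(0;r)$ and the vanishing of $G^+_{h(k)}$ there, so for any compact $K\subset\Omega_{\seq{h}}$ there is $N_0(K)$ with $K\subset\Omega_{h(k)}$ for all $k\ge N_0(K)$.

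With these in hand I would reproduce, line for line, the amalgamation step of Theorem~\ref{t:BC}: fix a compact exhaustion $\{K_k\}$ of $\Omega_{\seq{h}}$ and, by the previous paragraph, an increasing sequence $\{n_k\}$ with $K_k\subset\Omega_{h(n_k)}$; fix a summable $\{\rho_k\}$; starting from any biholomorphism $\phi_1\colon\Omega_{h(n_1)}\to\C^2$, inductively set $\phi_{k+1}=F_{k+1}\circ\psi_{k+1}\colon\Omega_{h(n_{k+1})}\to\C^2$, where $F_{k+1}\in\Aut$ approximates $\phi_k$ on an $r$-neighbourhood of $K_k$ (Andersén--Lempert, \cite[Theorem 2.1]{AL:inventiones}) and $\psi_{k+1}$ is close to the identity on $K_k$ and on $F_{k+1}^{-1}(B(0;k))$ (\cite[Lemma 4]{W:FB domains}), so that $\|\phi_k-\phi_{k+1}\|<\rho_k$ on $K_k$ and $\|\phi_k^{-1}-\phi_{k+1}^{-1}\|<\rho_k$ on $B(0;k)$. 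Summability yields limits $\phi\colon\Omega_{\seq{h}}\to\C^2$ and $\tilde\phi\colon\C^2\to\C^2$; a volume/Jacobian argument forces $\phi$ injective; by \cite[Theorem 5.2]{DE:Michael Problem} one has $\tilde\phi^{-1}=\phi$ on $\Omega_{\seq{h}}$; and surjectivity follows by showing $\tilde\phi(B(0;N_0))\subset\I{\K^+_{\seq{h}}}$ for every $N_0$ --- otherwise some $z_0=\tilde\phi(w_0)$ has $\Gr^+_{\seq{h}}(z_0)>0$, whereas $z_k:=\phi_k^{-1}(w_0)\in\Omega_{h(k)}$ satisfies $G^+_{h(k)}(z_k)=0$ and $z_k\to z_0$, contradicting the uniform convergence $G^+_{h(k)}\to\Gr^+_{\seq{h}}$ --- and then applying Lemma~\ref{l:boundary} as in Theorem~\ref{t:BC} to conclude $\phi(\Omega_{\seq{h}})=\C^2$.

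The only genuinely new content, and the single point needing care, is the reduction in the first two paragraphs: checking that conditions (i)--(ii) of Remark~\ref{r:conditions NA} really do deliver the uniform-in-$k$ filtration estimates and the uniform bound $B_R$ required by Remark~\ref{r:general NA} and by the analogues of Lemmas~\ref{l:step 4}, \ref{l:boundary} and \ref{l:compacts in basin}. Once that is confirmed, the amalgamation argument is identical to the one in Theorem~\ref{t:BC}, so I expect no further obstacle.
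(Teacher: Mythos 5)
Your proposal is correct and follows essentially the same route as the paper, whose own justification of Corollary~\ref{c:BC} is precisely the one-line observation that the amalgamation argument in the proof of Theorem~\ref{t:BC} only uses the uniform attracting ball $B(0;r)$, the Fatou--Bieberbach basins $\Omega_{h(k)}$, Lemmas~\ref{l:boundary} and~\ref{l:compacts in basin}, and the uniform convergence $G^+_{h(k)}\to\Gr^+_{\seq{h}}$, all of which persist under conditions (i)--(ii) of Remark~\ref{r:conditions NA} via Remark~\ref{r:general NA}. Your explicit check that the filtration data in condition (i) substitutes for the semigroup filtration in the analogues of Lemma~\ref{l:step 4} and Remark~\ref{r:NA Julia} is exactly the content the paper leaves implicit.
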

 Further, for parametrised families of H\'{e}non maps over compact manifolds, we have
\begin{ex}\label{e:skew}
	Let $M$ be a compact complex manifold and $\mathcal{H}: M \times \C^2 \to M \times \C^2$ be a skew product of H\'{e}non map parametrised over $M$, i.e., $\mathcal{H}(\lambda, x, y)=(\sigma (\lambda), \h_\lambda(x,y))$
	such that $\sigma$ is an (holomorphic) endomorphism of $M$ and $\h_\lambda$ is a H\'{e}non map of a \emph{fixed} degree $d \ge 2$ for every $\lambda \in M$, i.e., the family $\{\h_\lambda\}_{\lambda \in M}$ satisfies conditions (i) and (ii) of Remark \ref{r:conditions NA}. Further, if the family $\{H_\lambda\}_{\lambda \in M}$ is uniformly attracting on a neighbourhood of origin, i.e, it satisfies (\ref{e:uniformly attracting}), then for every $p=(p_1,0,0)$, the stable manifold $\Sigma^s_{\mathcal{H}}(p)$, defined as
	\[\Sigma^s_{\mathcal{H}}(p):=\{(\lambda,z) \in M \times \C^2: \mathcal{H}^k(\lambda, z) \to p \text{ as }k\to \infty\},\] is biholomorphic to $\C^2$, provided it is non-empty. This, in fact also answers a few particular cases of \textit{Problem 38} and \textit{39}, stated in \cite{Survey}.
\end{ex}
Finally, we conclude with an analytic property of the strong filled Julia set $\ko_\s^+.$ 
\begin{prop}\label{p:last}
Suppose there exists $1 \le i \neq j \le n_0$ such that $K_{\h_i}^+ \neq K_{\h_j}^+$, then $\Omega_{\seq{h}} \not\subset \ko_\s^+$, the strong filled Julia set, for every $\{h_k\} \subset \s.$
\end{prop}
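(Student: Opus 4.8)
The plan is to argue by contradiction: suppose that for some sequence $\seq{h}\subset\s$ one has $\Omega_{\seq{h}}\subset\ko_\s^+$. I would first record two formal facts about $\ko_\s^+$. Since $g^n\in\s$ for every $g\in\s$ and $n\ge 1$, we have $\ko_\s^+\subset K_g^+$ for all $g\in\s$; and $\ko_\s^+$ is forward invariant under $\s$, because if $z\in\ko_\s^+$ and $\{G_k\}\subset\s$ is arbitrary, then $\{G_k\circ\h_i\}\subset\s$, so $\{G_k(\h_i(z))\}$ is bounded, and hence $\h_i(z)\in\ko_\s^+$. Note also $\Omega_{\seq{h}}\subset\ko_\s^+\subset\K_\s^+\subset V_R\cup V_R^-$ for $R>R_\s$ by Remark~\ref{r:NA K+ closed} and Proposition~\ref{p:K+ closed}, and that $\ko_\s^+$ contains a neighbourhood of the origin.

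The main structural step is a Fatou-component trapping argument. Since all generators are uniformly attracting at $0$, every $g\in\s$ fixes $0$ with $0$ attracting, so the connected component of $\I{K_g^+}$ containing $0$ is precisely the attracting basin $\Omega_g$ of $g$ at the origin; and distinct Fatou components of $g$ are disjoint open sets. As $\Omega_{\seq{h}}$ is open, connected, contains $0$, and lies in $\I{K_g^+}$, it must lie in the component of $0$, so $\Omega_{\seq{h}}\subset\Omega_g$ for every $g\in\s$. Running the same reasoning on the connected component $\Omega^{\ast}$ of $\I{\ko_\s^+}$ containing $0$, one finds that $\Omega^{\ast}$ is itself forward invariant under $\s$ (indeed $g(\Omega^{\ast})$ is open, connected, contains $0$, and lies in $g(\ko_\s^+)\subset\ko_\s^+$, hence in $\Omega^{\ast}$), that $\Omega_{\seq{h}}\subset\Omega^{\ast}\subset\Omega_g$ for all $g\in\s$, and — since $\Omega^{\ast}$ contains a neighbourhood of $0$ and $\h_i^n\to 0$ on $\Omega^{\ast}$ — that $\bigcup_{n\ge1}\h_i^{-n}(\Omega^{\ast})=\Omega_{\h_i}$ for each $i$. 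By Theorem~\ref{t:BC}, $\Omega^{\ast}$ contains a biholomorphic copy of $\C^2$, so it is unbounded, while $\Omega^{\ast}\subset V_R\cup V_R^-$; thus $\Omega^{\ast}$ reaches arbitrarily far into $V_R^-$.

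The hypothesis enters in the last step, which I expect to be the main obstacle. By Remark~\ref{r:unique Green} (i.e.\ Theorem~5.4 of~\cite{Lamy}), the assumption $K_{\h_i}^+\neq K_{\h_j}^+$ forces $\h_i$ and $\h_j$ to have no common iterate, and in particular $\ko_\s^+\subsetneq\K_\s^+$. I would then pick $w\in\K_\s^+\setminus\ko_\s^+$ and, exactly as in the proof of Corollary~\ref{c:non-uniqueness}, use Lemma~\ref{l:step 4} together with the filtration estimates (Remark~\ref{r:filtration estimate}, Corollary~\ref{c:Green constant}) to produce sequences in $\s$ of unbounded length along which $w$ escapes to infinity and along which $w$ stays in $V_R$. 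The crux is to transport this incompatible behaviour into $\Omega^{\ast}$, using the $\s$-forward invariance of $\Omega^{\ast}$ and the semi-invariance of $G_\G^+$ (Corollary~\ref{c:result 2}), so as to contradict that $G_\G^+\equiv 0$ on $\Omega^{\ast}\subset\ko_\s^+$ while $G_\G^+>0$ throughout $\uo_\s^+$ (Corollary~\ref{c:non-zero on uo+}): concretely, the absorbing domain $\Omega^{\ast}$, being common to $\h_i$ and $\h_j$ yet sweeping out under backward iteration to the two distinct basins $\Omega_{\h_i}$ and $\Omega_{\h_j}$, cannot coexist with the non-existence of a common iterate of $\h_i$ and $\h_j$. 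Everything preceding this reconciliation is purely formal; the real content of the proof lies there.
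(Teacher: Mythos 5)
There is a genuine gap, and it sits precisely where you place it yourself: the concluding contradiction is never produced, and the structural facts you assemble beforehand cannot produce it. Indeed, every property you derive for $\Omega^{\ast}$ (and hence for $\Omega_{\seq{h}}$) --- open, connected, containing $0$, forward invariant under $\s$, contained in $\I{K_g^+}$ and hence in $\Omega_g$ for every $g\in\s$, with $\bigcup_{n}\h_i^{-n}(\Omega^{\ast})=\Omega_{\h_i}$ --- is already enjoyed by a small ball $B(0;r)$ in the common attracting region, and such a ball exists for \emph{any} semigroup satisfying (\ref{e:uniformly attracting}), whether or not $K_{\h_i}^+=K_{\h_j}^+$. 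So ``a common absorbing domain sweeping out under backward iteration to the two distinct basins'' is not, by itself, in tension with the absence of a common iterate, and the only genuinely extra features you have ($\Omega^{\ast}$ unbounded, $G_\G^+\equiv 0$ on $\Omega^{\ast}$) are never brought into conflict with anything: the semi-invariance of Corollary~\ref{c:result 2} only propagates the vanishing of $G_\G^+$ forward along the semigroup, and the escape/boundedness dichotomy for a point $w\in\K_\s^+\setminus\ko_\s^+$ (as in Corollary~\ref{c:non-uniqueness}) concerns a point outside $\ko_\s^+$, with no mechanism offered for ``transporting'' it into $\Omega^{\ast}$. As written, the argument would stall at exactly this reconciliation step.

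For comparison, the paper closes this gap with three ingredients absent from your sketch. First, the hypothesis $K_{\h_i}^+\neq K_{\h_j}^+$ enters through the rigidity theorem for H\'enon-type maps (Theorem 6.5 of \cite{Dinh-Sibony}): if some $J_h^+$ were contained in $\ko_\s^+\subset K_g^+$ with $K_h^+\neq K_g^+$, then $\mu_h^+$ would be a closed positive $(1,1)$-current supported on $K_g^+$, forcing $\mu_h^+=\mu_g^+$, a contradiction; so no $J_h^+$, $h\in\s$, can lie in $\ko_\s^+$. Second, assuming $\Omega_{\seq{h}}\subset\ko_\s^+$, the paper uses that $\Omega_{\seq{h}}$ is a Fatou--Bieberbach domain (Theorem~\ref{t:BC}), hence carries no non-constant bounded plurisubharmonic function: applying this to the upper regularisation of $F=\limsup_k\|h(k)\circ h_1^{-1}\|$ (bounded on compacts by the filtration, Lemma~\ref{l:step 4}) yields $F^{\ast}\equiv 0$, i.e.\ the backward invariance $h_1^{-1}(\Omega_{\seq{h}})\subset\Omega_{\seq{h}}$ --- note this is backward invariance of $\Omega_{\seq{h}}$ itself, not the forward invariance you record. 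Third, taking a relatively compact piece $\mathcal{D}\subset B(0;r_\s)$ of an algebraic curve with $[\mathcal{D}]\wedge\mu_{h_1}^-\neq 0$, the Bedford--Smillie convergence $d_{h_1}^{-n}(h_1^n)^{*}[\mathcal{D}]\to c\,\mu_{h_1}^+$ places $\operatorname{supp}\mu_{h_1}^+=J_{h_1}^+$ inside $\ko_\s^+$, contradicting the first step. Your outline would need some substitute for this rigidity-plus-equidistribution mechanism; the formal trapping argument alone does not suffice.
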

\begin{proof}
We first claim that for every $h \in \s$, $\Omega_h \not\subset \ko_\s^+.$ Note that by definition $\ko_\s^+ \subset \K_h^+$ for every $h \in \s.$ If $\Omega_h \subset \ko_\s^+$ then $\partial\Omega_h=J_h^+ \subset \ko_\s^+.$ By assumption on $\s$, there exists (at least one) $g \in \s$ such that $K_g^+ \neq K_h^+.$ Now from the above $J_h^+ \subset \ko_\s^+ \subset K_g^+$, i.e., $\mu_h^+$ is a positive closed $(1,1)$ current supported on $K_g^+.$ Hence from Theorem 6.5 in \cite{Dinh-Sibony}, $\mu_h^+=\mu_g^+$ or $K_h^+=K_g^+$, which is a contradiction to the assumption.
	
\smallskip Now suppose there exists a sequence $\{h_k\} \subset \s$, such that $\Omega_{\seq{h}} \subset \ko_\s^+.$ Define the sequence $\tilde{h}_k=h(k)\circ h_1^{-1}$, for $k \ge 2$ Thus $z \in \Omega_{\seq{h}} \subset \ko_\s^+$, $\tilde{h}_k(z)$ is bounded, i.e., $h(k) \circ h_1^{-1}(z)$ is bounded. Let $F_k(z):=\|h(k) \circ h_1^{-1}(z)\|$. Then $F_k$ is a sequence of positive pluri-subharmonic functions on $\Omega_{\seq{h}}$. Further on any compact subset of $\Omega_{\seq{h}}$, all $F_k$'s, except finitely many is bounded uniformly by $R_\s$, where $R_\s$ is the radius of filtration of the semigroup $\s$ (as in Remark \ref{r:filtration estimate}.

\smallskip Next, let $F(z)=\limsup F_k(z)\text{ for } z \in \Omega_{\seq{h}}.$ Hence from Theorem 2.6.3 in \cite{KlimekBook}, the upper semicontinuous regularisation of $F$, denoted by $F^*$ of $F$ is a bounded pluri-subharmonic function on $\Omega_{\seq{h}}.$ Also as $F(z)=0$ on $B(0;r_\s)$, and the Lebesgue measure of the set $\{z \in \Omega_{\seq{h}}: F(z) \neq F^*(z)\}$ is zero, it follows that $F^*(z)=0$ almost everywhere on $B(0;r_\s).$ Since $\Omega_{\seq{h}}$ is a Fatou-Bieberbach domain by Theorem \ref{t:BC}, it cannot admit any non-constant bounded pluri-subharmonic function. Thus $F^* \equiv 0$ on $\Omega_{\seq{h}}$, i.e., $h(k)(w) \to 0$ for every $w \in h_1^{-1}(\Omega_{\seq{h}})$. Hence $h_1^{-1}(\Omega_{\seq{h}}) \subset \Omega_{\seq{h}}.$

\smallskip  Let $\mathcal{D} \subset B(0;r_\s) \subset \Omega_{\seq{h}}\subset \ko_\s^+ $ be a relatively compact subset of an one-dimensional algebraic variety such that $\partial{\mathcal{D}} \cap J_{h_1}^-=\emptyset$ and $[\mathcal{D}] \wedge \mu_{h_1}^-=c \neq 0.$ Thus by Corollary 1.7 of \cite{BS3}, \[S_n=\frac{1}{d_{h_1}^n} {h_1^n}^*([\mathcal{D}]) \to c \mu_{h_1}^+ \text{ as }n \to \infty.\]
Note $S_n$'s are positive $(1,1)$-currents supported on $h_1^{-n}(\mathcal{D}) \subset h_1^{-n}(\Omega_{\seq{h}}) \subset \Omega_{\seq{h}} \subset \ko_\s^+$ (from the previous observation). Hence $\mu_{h_1}^+$ is supported on $\ko_\s^+$, in particular $J_{h_1}^+ \subset \ko_\s^+$ which is not possible from the claim above.
\end{proof}
\begin{rem}
The above also proves that $\ko_\s^+$ cannot contain any $(1,1)$ positive closed current of finite mass and the positive Green's function $G_\G^+$ is unbounded on all\,---\,both autonomous and non-autonomous\,---\,basins of attraction.
\end{rem}
{\small  \bibliographystyle{amsplain}

}
\end{document}